\numberwithin{equation}{section}
\newcommand{\abs}[1]{\left\vert#1\right\vert}
\newcommand{\re}{\mathrm{Re}}
\newcommand{\im}{\mathrm{Im}}
\renewcommand{\L}{\mathcal{L}}
\newcommand{\ovl}[1]{\overline{#1}}
\newcommand{\R}{\mathbb R}
\newcommand{\N}{\mathbb N}
\newcommand{\C}{\mathbb C}
\DeclareMathOperator{\supp}{supp}
\DeclareMathOperator{\Ker}{Ker}
\DeclareMathOperator{\Ran}{Ran}
\DeclareMathOperator{\diag}{diag}
\newcommand{\p}{\partial}
\newcommand{\dbar}{\bar\partial}
\newcommand{\dbarb}{\bar\partial_b}
\newcommand{\dbarbs}{\bar\partial^*_b}
\newcommand{\vp}{\varphi}
\newcommand{\nn}{\nonumber}
\newcommand{\wg}{\wedge}
\newcommand{\I}{\mathcal{I}}
\DeclareMathOperator{\Tr}{Tr}
\newcommand{\la}{\langle}
\newcommand{\ra}{\rangle}
\newtheorem{thm}{Theorem}[section]
\newtheorem{prop}[thm]{Proposition}
\newtheorem{proposition}[thm]{Proposition}
\newtheorem{cor}[thm]{Corollary}
\newtheorem*{theorem*}{Teorema}
\theoremstyle{definition}
\newtheorem{definition}[thm]{Definition}
\theoremstyle{remark}
\newtheorem{remark}[thm]{Remark}
\newcommand{\lla}[1]{\left\{ #1\right\} }
\newcommand{\bkj}{b^{\bar{k}j}}
\newcommand{\pare}[1]{\left( #1\right)}
\newcommand{\corc}[1]{\left[ #1\right]}
\newcommand{\ip}[1]{ \left\langle #1 \right\rangle }
\newcommand{\ba}[1]{ \overline{#1}}
\newcommand{\vab}[1]{\left| #1 \right| }
\newcommand{\norm}[1]{{\| {#1} \|}}
\newcommand{\nnorm}[1]{{\left\| {#1} \right\|}}
\newcommand{\Norm}[1]{{\left\| {#1} \right\|}}
\newcommand{\ve}{\varepsilon}
\newcommand{\lb}{\bar{L}}
\newcommand{\lba}{\bar{L}^{*}}
\begin{document}

\title{Subelliptic and Maximal $ L^p $ Estimates for {the} Complex Green Operator on non-pseudoconvex domains}%
\author{Coacalle J.}
\address{Departamento de Computacão e Matemática, Universidade de São Paulo, Ribeirão Preto,	SP, 14040-901, Brasil}
\email{jcoacalle@usp.br}

\thanks{The author was supported by the São Paulo Research Foundation
	(FAPESP - grant 22/02211-8) 
}
\subjclass{Primary: 32W10, Secondary: 32F17, 
	32V35, 35A27, 35B65.
	}
\keywords{Subelliptic estimates, 
	Maximal extimates, 
	tangential Cauchy-Riemann,
	complex Green operator,
	weak Y(q), 
	D(q) condition, 
	non pseudoconvex,
	microlocal analysis}

\maketitle

\vspace{-0.8cm}

\begin{abstract} 
We {prove} subelliptic estimates for {the} complex Green operator $ K_q $ at a specific level $ q $ of the $ \dbarb $-complex, defined on a {not necessarily pseudoconvex} CR manifold satisfying the commutator finite type condition. Additionally, we {obtain} maximal $ L^p $ estimates for $ K_q $ by considering closed-range estimates. Our results apply to a family of manifolds that includes a class of weak $ Y(q) $ manifolds satisfying the condition $ D(q) $. We employ {a} microlocal decomposition and Calderon-Zygmund theory to obtain subelliptic and maximal-$ L^p $ estimates, respectively.	
\end{abstract}

\section{Introduction}

A fundamental problem in the study of Kohn-Laplacian, $\Box_b$, is determining under which conditions its inverse, the complex Green operator $K_q$, has a gain in regularity.  
This is because, despite being of the {Laplace} type, $\Box_q$ generates a non elliptic second order system of partial differential equations. 
The problem {has been} widely explored for pseudoconvex and $ Y(q) $ manifolds, and we refer the reader to \cite{KoNi1965,RothStein76,GriRoth88,Derridj91,koenig02} for {back-ground}. We introduce a subtle comparability condition well adapted to a kind of manifold with the finite type property, more general than those mentioned above. This condition is still within the scope of the well-known condition $ D(q) $, and we use it to show a gain in regularity for $ K_q $ through $\ve$-subelliptic estimates in $L^2$-Sobolev spaces. 
{Furthermore, we show optimal gain in regularity for $ K_q $ at some tangential directions} through maximal $ L^p $ estimates on a specific level of the $ \dbarb $-complex where we assume a closed range hypothesis.

Let $ M $ be a {smooth} compact orientable  CR manifold of hypersurface type embedded in $ \C^{N} $, of real dimension $ 2n-1 $.
The CR structure {on} $ M $ allows us to define the tangential Cauchy Riemann operator $ \dbarb $, which acts on the $ (0,q) $-form spaces in the $ \dbarb $-complex.
The Kohn-Laplacian operator $ \Box_b $ is defined at each form level on $ M $ by $ \Box_b=\dbarb\dbarbs+\dbarbs\dbarb $, where $ \dbarbs $ is {the} $ L^2 $-adjoint of $ \dbarb $. When $ \Box_b $ {has a continuous inverse} on $ {^\perp \ker \Box_b} $, {it} is called by the complex Green operator $ K_q $.
The existence and the regularity properties of $ K_q $ are achieved by obtaining closed range estimates in $ L^2 $ and $ L^2 $-Sobolev spaces. Establishing these estimates relies strongly on the geometry of the manifold $ M $, and the results may be valid for only specific levels of the $ \dbarb $-complex.
{Examples of well-known geometric conditions on $ M $ are} 
strong and weak pseudoconvexity, $ Y(q) $ and weak-$ Y(q) $. This latter was recently introduced by Harrington and Raich
\cite{HaRa11,HaRa15}, 
{and it is the most general known sufficient condition for continuity of $ K_q $ on $ L^2 $-Sobolev spaces.
These geometric conditions are defined in terms of the Levi form.}
Essentially, the {Levi form} measures the non-involutivity of the CR structure and its conjugate, and associates a Hermitian matrix {to} it.
We say $ \ve $-subelliptic estimates are satisfied, if
\begin{equation}\label{eq:main}
	\norm{u}_\ve^2 \lesssim  \norm{\dbarb u}^2_{L^2}+\norm{\dbarbs u}^2_{L^2} +\norm{u}^2_{L^2}  
\end{equation}
for every smooth $ (0,q) $-form $ u $,
where $ \norm{u}_\ve $ denotes the $ L^2 $-Sobolev norm of $ u $ of order $ \ve>0 $. 
Also, if $ U $ is an open set in $ M $ and $ \lla{X_j}_{1\leq j\leq 2n-2} $ is a set of real vector fields on $ U $ such that $ \lla{X_j+iX_{j+n-1}}_j $ spans the CR structure of $ M $, we say that maximal $ L^p $ estimates are satisfied at $ U $ when
\begin{equation}\label{ec:max}
	\sum_{i,j=1}^{2n-2}\norm{ X_iX_j u }_{L^p}\lesssim \norm{\Box_b u}_{L^p}+\norm{u}_{L^p}
\end{equation}
for every smooth $ (0,q) $-form $ u $ supported in $ U $, and for every $ p\in (1,\infty) $.

In \cite{Kohn1965}, Kohn proved {that $ Y(q) $ implies that \eqref{eq:main} holds with $ \ve=1/2 $.}
In particular $ Y(q) $ implies that the CR manifold has the finite commutator type property of order $ 2 $. Also {$ M $} satisfies a comparable eigenvalues condition on the Levi form called $ D(q) $. Roughly speaking, $ D(q) $ locally compares the sum of any $ q $ eigenvalues with the trace of the Levi matrix.
In \cite{Derridj91}, Derridj used microlocal analysis to show subelliptic estimates {hold for} $ (0,1) $-forms on pseudoconvex hypersurfaces in $ \C^n $, assuming $ D(1) $ and finite commutator type condition of some order. Koening generalized Derridj's result for higher levels assuming $ D(q) $ (see \cite[Appendix B]{koenig02}).

On the other hand, the work done in \cite{Kohn1965}, allow us to conclude that under the $ Y(q) $ condition, $ K_q $ gains just a derivative at all tangential directions. However, Rothschild and Stein in \cite{RothStein76} showed that
\eqref{ec:max} {holds},
which means that $ K_q $ gains two derivatives in the ``good''  tangential directions $ \lla{X_j}_{j=1}^{2n-2} $. 
In \cite{GriRoth88}, Grigis {and} Rothschild showed that $ D(q) $ is a necessary condition to obtain the maximal estimates \eqref{ec:max}.
For the pseudoconvex case, 
{Koening showed that the hypothesis of closed range estimates, $ D(q) $, and finite commutator type conditions suffice to prove \eqref{ec:max} \cite{koenig02}.}
It is worth mentioning that Derridj \cite{Derridj78} and Ben Moussa \cite{Ben00} studied analogous estimates for the $\dbar$-Neumann problem in 
pseudoconvex domains,
while Harrington and Raich recently examined this for non pseudoconvex domains in \cite{HaRa22}.

Our results about subelliptic estimates {apply to} certain non-pseudoconvex manifolds, which satisfy $ D(q) $ and the finite type condition. These manifolds satisfy a comparability condition introduced in this paper (see Definition \ref{def:comp}). This new condition allows us {adapt} conveniently the procedure in \cite{Derridj91} and obtain subelliptic estimates of the type \eqref{eq:main}, Theorem \ref{thm:subell}. We prove the maximal $L^p$ estimates, Proposition \ref{prop:maximalestimates} , in the spirit of \cite{koenig02,christ88}, using previous subelliptic estimates together with closed range hypothesis just at some specified level of $(0,q)$-forms. Due to this last hypothesis and the fact that $ M $ may not be pseudoconvex, we redo the process performed in \cite{koenig02} because {our hypotheses do not imply that} $ K_{q-1} $ or $ K_{q+1} $ exist (see \cite[Proposition 5.5]{koenig02} and subsequent remark).

We distribute the topics as follows. Section $ 2 $ presents the setting in of our CR manifold, the definition of our comparability condition, and its relation with the well-known $ D(q) $. Section $ 3 $ contains a special integration by parts and the microlocal configuration to obtain $ \ve $-subelliptic estimates. In Section $ 4 $, we utilize the subelliptic estimates and an escalation method. We provide pointwise and differential estimates for the distribution kernels associated with $ K_q $, along with cancellation properties on its action on bump functions. Then, we demonstrate maximal $ L^p $ estimates using a Calderon-Zygmund theory.

\section{Preliminaries}

\subsection{The CR structure on $M$}
Let $M\subset\C^N$ be a smooth, compact, orientable CR manifold of hypersurface type of real dimension $ 2n-1$. Since $M$ is embedded, we take the CR structure on $M$ to be the induced CR structure $T^{1,0}(M)=T^{1,0}(\C^N)\cap T(M)\otimes \C$. Let $T^{p,q}(M)$ denote the exterior algebra generated by $T^{1,0}(M)$ and $T^{0,1}(M)$ and $\Lambda^{p,q}(M)$ the bundle of $(p,q)$-forms on $T^{p,q}(M)$. The calculations in this work do not depend on $ p $ because $M \subset \C^N$, and so we will always assume $ p=0 $. 

\subsection{$\dbarb$ on embedded manifolds} 
We use the induced pointwise inner product on $ \Lambda^{0,q}(M) $, $ \ip{\cdot,\cdot}_x $, coming from the inner product on $ (0,q) $-forms in $ \C^N $. Therefore, we define an $ L^2 $-inner product on $\Lambda^{0,q}(M)$ given by
\[
(\vp,\psi) =\int_M \la \vp,\psi \ra_x \, dV(x)
\]
where $dV$ is the volume element on $M$. We use $ \|\cdot\|_{L^2_q(M)} $ to denote the associated $L^2$-norm of the inner product above and $ L^2_q(M) $ to the completion of $\Lambda^{0,q}(M)$ with respect to $ \|\cdot\|_{L^2_q(M)} $. The tangential Cauchy-Riemann operator $\dbarb$ is the restriction of the De Rham exterior derivative $d$ to $\Lambda^{0,q}(M)$.  Additionally, we are denoting (by an abuse of notation) with $ \dbarb $ the closure of $ \dbarb $ in norm $ \|\cdot\|_{L^2_q(M)} $. In this way,  $ \dbarb :L^2_q(M)\rightarrow L^2_{q+1}(M) $ is a well-defined, closed, densely defined operator with $L^2$ adjoint $\dbarbs: L^2_{q+1}(M)\rightarrow L^2_q(M)$. We define the Kohn Laplacian $ \Box_b:L^2_q(M)\rightarrow L^2_q(M) $ by
\[
\Box_b:=\dbarbs\dbarb +\dbarb \dbarbs.
\]
Using a covering $ \lla{U_\mu}_\mu $ and a partition of unity $ \lla{\zeta_\mu}_\mu $ subordinate to the covering, we define the Sobolev norm of order $ s $, of a $ (0,q) $-form $ f $, given by 
\[
\norm{f}_{H^s_{q}}^2=\sum_\mu \norm{\tilde{\zeta}_\mu \Lambda^s \zeta_\mu f^\mu}^2_{L^2_q(M)}
\]
where $ f^\mu $ is the form $ f $ expressed in local coordinates on $ U_\mu $, $ \Lambda $ is the pseudodifferential operator with symbol $ (1+\abs{\xi}^2)^{\frac{1}{2}} $ and $ \tilde{\zeta}_\mu$ is a cutoff function that dominates $ \zeta_\mu $ such that $ \supp{\tilde{\zeta}_\mu}\subset U_\mu $.

\subsection{The Levi form and the weak $ Y(q) $ condition}
By the orientability of $ M $, we choose a purely imaginary unit vector $ T $ {that is} orthogonal to $T^{1,0}(M)\oplus T^{0,1}(M)$.
Let $\gamma$ be the globally defined $1$-form that annihilates $T^{1,0}(M)\oplus T^{0,1}(M)$ and is normalized so that $\la \gamma, T \ra = -1 $.

	The Levi form at a point $x\in M$ is the Hermitian form given by $\la {d\gamma_x ,L\wedge \bar{L}'} \ra$, where $L,L'\in T^{1,0}_x(U)$ and $U$ is a small neighborhood of $x\in M$.

If $\{L_{j}\}_{j=1}^{n-1}$ is a local basis of $T^{1,0}(U)$ and $ c_{jk} $ are such that $ \la {d\gamma,L_j\wedge\ba{L}_k}\ra=c_{jk} $, then $ [c_{jk}] $ is called by the Levi matrix with respect to $\{L_{j}\}_{j=1}^{n-1}$ and $T$.

We say that $M$ is pseudoconvex (resp., strictly pseudoconvex) at a point $ p\in M $ if all the eigenvalues of the Levi matrix are nonnegative (resp., positive) at $ p $. We say $ M $ is pseudoconvex or strictly pseudoconvex if it is at {every} point.
Also, for fixed $ 1\leq q \leq n-1 $, we say $ M $ satisfies the $ Z(q) $ condition at $ p $
if the Levi matrix has at least $ n-q $ positive eigenvalues or at least $ q+1 $ negative eigenvalues at $ p $. $ M $ satisfies the $ Y(q) $ condition at $ p $ if both $ Z(q) $ and $ Z(n-1-q) $ conditions are satisfied at $ p $. We say $ Z(q) $ or $ Y(q) $ condition holds {on} $ M $ {if does at every} point.

Due to \cite{Kohn1965,Kohn86,Nicoara2006}, the geometric conditions mentioned in the previous paragraph are sufficient to show a closed range of $ \dbarb $ on $ L^2_q(M) $. However, another weaker geometric condition was introduced in \cite{HaRa11,HaRa15,HaPeRa15,CoRa19} {also suffices to establish closed range at level $ q $}, which we define as follows.

\begin{definition}\label{defn:weak Z(q)}
	For $1\leq q\leq n-1$, we say $M$ satisfies weak $Z(q)$ if there exists a real $\Upsilon\in T^{1,1}(M)$ satisfying:
	\begin{enumerate}[(A)]
		\item\label{wzq1} $\vab{\theta}^2\geq (i\theta\wedge\ba{\theta})(\Upsilon)\geq 0$ for all $\theta\in \Lambda^{1,0}(M)$ 
		\item $\mu_1+\mu_2+{\cdots}+\mu_{q}-i\la{d\gamma_x,\Upsilon}\ra\geq 0$ where $\mu_1,...,\mu_{n-1}$ are the eigenvalues of the Levi form at $x$
		in increasing order.  \label{wzq2}
		\item\label{eqn:omega(Upsilon) neq q}
		$\omega(\Upsilon)\neq q$ where $\omega$ is the $(1,1)$-form associated to the induced metric on $\C T(M)$.
	\end{enumerate}
	We say that $M$ satisfies weak $Y(q)$ if $M$ satisfies both weak $Z(q)$ and weak $Z(n-q-1)$.
\end{definition}

\subsection{Finite Type condition and its consequences}
Let $ U $ be an open set in $ M $ and $ \lla{L_j}_{1\leq j\leq n-1} $ an orthonormal basis of $ T^{1,0}(M) $ on $ U $. Let $ X_j={\re L_j} $, $ X_{j+n-1}={\im L_j} $, for $ 1\leq j\leq n-1 $ and
\[
\L_m = \lla{ [ X_{j_1},[X_{j_2},[ \cdots ,[X_{j_{k-1}},X_{j_k}] \cdots ] ] ]  : k\leq m, X_{j_s}\in \lla{ X_k }_{1\leq k\leq 2n-2} }.
\]
Observe that
	\begin{equation} \label{eq:horobs}
	\sum_{j=1}^{2n-2} \norm{X_ju}^2_{L^2} \simeq \sum_{j=1}^{n-1} \pare{\norm{L_ju}^2_{L^2} + \norm{\bar{L}_ju}^2_{L^2} },
	\end{equation}
for any smooth function $ u $ supported in $ U $. 
We will say that the finite commutator type (or finite type) condition is satisfied at $ U $ if there exists a vector field system $ \lla{L_j}_{1\leq j\leq n-1} $ defined on $ U \subset M$ such that $ \operatorname{span}{ \L_m} = T(U) $ for some $ m\in \lla{2,3,\cdots} $.The order of the finite type {is the smallest such} $ m $. 
We say that $ M $ has the finite type property if for each point {in} $ M ${, there} exists an open neighborhood $ U $ {on which} the finite type condition is satisfied.
It is a known result due to Hormander that if the finite type condition of order $ m $ is satisfied at $ U $,
	then there exists $ \varepsilon>0 $ and $ C>0 $ such that are satisfied the next subelliptic estimates
	\begin{equation}\label{thm:horm}
	\norm{u}_{H^\ve}^2\leq C \pare{\sum_{j=1}^{2n-2} \norm{X_ju}^2_{L^2}+\norm{u}^2_{L^2} }, \qquad \forall u\in C^\infty_0(U).
	\end{equation}
Also, $ \varepsilon=\frac{1}{m} $ is the best constant here {(see e.g. Appendix in \cite{tanaka75})}. In the following, we assume $ M $ has the finite type property. 

\subsection{The comparable weak $ Y(q) $ condition}

\begin{definition}\label{def:comp}
	Let $ U $ be an open set in $ M $. For $ 1\leq q\leq n-1 $, we say that comparable weak $ Z(q) $ is satisfied on $ U $ if there exists a real bivector $ \Upsilon \in T^{1,1}(U) $ satisfying:
	\begin{enumerate}[(a)]
		\item\label{def:cwza} $ |\theta|^2 > i(\theta \wg \bar{\theta})\Upsilon >0 $ for all $ \theta \in \Lambda^{1,0}(U) $
		\item\label{def:cwzb} $ \sigma_q -i\ip{d\gamma , \Upsilon} \geq 0 $, where $ \sigma_q $ represents the sum of {the smallest} $ q $ eigenvalues of the Levi matrix.
	\end{enumerate}
\end{definition}

We say that {comparable weak $ Y(q) $} is satisfied on $ U $ ({resp.,} $ M $) if it {satisfies} both comparable weak $ Z(q) $ and  comparable weak $ Z(n-1-q) $ on $ U $ ({resp., if every point of $ M $ has a neighborhood $ U $ on which comparable weak $ Y(q) $ holds}. 
Weak $ Z(q) $ manifolds satisfying \eqref{def:cwza} in Definition \ref{def:comp} instead of \eqref{wzq1} in Definition \ref{defn:weak Z(q)} satisfy comparable weak $ Z(q) $. 
In essence, the existence of the bivector in Definition \ref{def:comp} refers to the existence of a Hermitian matrix $ \Upsilon $ with smooth entries, eigenvalues in $ ]0,1[ $, and such that $ \sigma_q-\Tr(\Upsilon C) \geq 0 $ where $ C $ is the Levi matrix. 
Thus, we refer to $ \Upsilon $ as a matrix instead of a bivector.
In \cite{HaRa22} is introduced an analogous version of this condition.
Along with other regularity and boundary conditions in $ \Upsilon $, they show local maximal $ L^2 $ estimates for the $ \dbar $-Neumann problem on boundaries of domains in $ \C^N $ \cite[Proposition 2.4]{HaRa22}. 

{Indeed}, if $ Z(q) $ is satisfied at $ x\in M $, then a neighborhood $ V_x $ of $ x $ exists such that the comparable weak $ Z(q) $ condition is satisfied at $ V_x $. Let $ C $ denote the Levi matrix, $ C_x $ be the matrix $ C $ evaluated at $ x $, $ \lla{\lambda_j(C)}_j $ be the eigenvalues of the matrix $ C $ in increasing order, and $ A $ be a unitary matrix (with constant entries) such that
$
C_x=A^*DA
$,
where $ D=\diag\lla{\lambda_1(C_x),\cdots, \lambda_{n-1}(C_x)} $. Assume that there exist at least $ n-q $ positive eigenvalues at $ x $. The proof when there exist at least $ q+1 $ negative eigenvalues is similar. Then $ \lambda_q(C_x)>0 $. Let $ \ve $ be a positive constant small sufficient such that 
\begin{equation}\label{eq:yqImpcomp0}
	(1-\ve)\lambda_q(C_x) +\ve( \lambda_1(C_x)+\cdots+\lambda_{q-1}(C_x)- \lambda_{q+1}(C_x)-\cdots - \lambda_{n-1}(C_x) )>0.
\end{equation}
Let $ B=C-A^* DA $. By the continuity of the entries $ B^{ij} $ of $ B $ and since $ B_x=0 $; for $ \delta>0 $, a neighborhood $ V_x $ of $ x $ exists, such that
$
\abs{B_{y}^{ij}}<\delta,
$
for all $ 1\leq i,j\leq n-1$ and  $ y\in V_x $.
Then, in particular, by applying Schur's inequality, we have
\begin{equation}\label{eq:orderdelta}
	\abs{ \lambda_1(B_y) }^2\leq \sum_{i,j=1}^{n-1}\abs{B_y^{ij}}^2=O(\delta^2).
\end{equation}
Define $ \Upsilon=A^* \diag\lla{ 1-\ve,\cdots,1-\ve,\ve,\cdots,\ve }A $, where $ 1-\ve $ appears $ q-1 $ times in the diagonal matrix. So, from Weyl's inequalities (see \cite[Theorem 4.3.1]{HornJohnson13}), we have
\begin{align}
	\lambda_1(C_y)&+\cdots+\lambda_q(C_y)-\Tr( \Upsilon C_y )= \nn \\
	&=\lambda_1(A^*DA+B_y)+\cdots + \lambda_{q}(A^*DA+B_y)
	-\Tr( \Upsilon A^* DA )-\Tr( \Upsilon B_y ) \nn \\
	&\geq \lambda_1(A^*DA)+\cdots +\lambda_q(A^*DA) +q\lambda_1(B_y)
	- \Tr( A\Upsilon A^* D )-\Tr(\Upsilon B_y) \nn \\
	&=(1-\ve)\lambda_q(C_x) +\ve( \lambda_1(C_x)+\cdots+\lambda_{q-1}(C_x)- \lambda_{q+1}(C_x)-\cdots - \lambda_{n-1}(C_x) ) \nn \\
	&\quad + q\lambda_1(B_y)-\Tr(\Upsilon B_y). \label{eq:yqImpcomp}
\end{align}
Now, since $ \Tr(\Upsilon B_y) $ is a linear combination of elements $ B_y^{ij} $, we have
\begin{equation}\label{eq:orderdelta2}
	\Tr(\Upsilon B_y) = O(\delta).
\end{equation}
So, from \eqref{eq:orderdelta}, \eqref{eq:orderdelta2}, and \eqref{eq:yqImpcomp0}, we can choose $ \delta $ small sufficient such that the right-hand side in \eqref{eq:yqImpcomp} be positive for all $ y $ in some neighborhood $ V_x $ of $ x $. This shows that comparable weak $ Z(q) $ condition is satisfied at $ V_x $ using $ \Upsilon $ as we defined before.

\subsection{Relation between comparable weak $ Y(q) $ and $ D(q) $ conditions.}
In this section, we expose the relation between our comparable condition and the well-known condition $ D(q) $, given in \cite{GriRoth88} and {inspired by} \cite{Derridj78}. 
It is worth mentioning that {the motivation of} $ D(q) $
and our condition {is} to generalize the condition $ Y(q) $, which  is equivalent to 
\begin{equation}\label{eq:equivyq}
 -\sigma^-<\sigma_q<\sigma^+ ,
\end{equation}
where $ \sigma_q $ {represents} any possible sum of $ q $ eigenvalues of the Levi matrix $ [c_{jk}] $ and
\[
\sigma^++\sigma^-=\sum_{j=1}^{n-1}\abs{\mu_j} \ , \qquad \sigma^+-\sigma^-=\sum_{j=1}^{n-1}\mu_j,
\]
with $ \lla{\mu_j}_{j=1}^{n-1} $ being the eigenvalues of $ C=[c_{jk}] $ (see \cite{GriRoth88}). We say that $ M $ satisfies $ D(q) $ in $ U $ if, for any compact $ K\subset U $, there exists a constant $ \ve_K >0 $ such that
\begin{equation}\label{def:Dq}
-\sigma^-+\ve_K(\sigma^++\sigma^-) \leq \sigma_q\leq \sigma^+-\ve_K(\sigma^++\sigma^-).
\end{equation}
On the other hand, the comparable weak $ Y(q) $ condition generalizes \eqref{eq:equivyq} as 
\begin{equation*}
	\Tr(\Upsilon_1 C)\leq \sigma_q\leq \Tr([Id-\Upsilon_2] C) 
\end{equation*}
where $ \Upsilon_1 $ and $ \Upsilon_2 $ are from comparable weak $ Z(q) $ and $ Z(n-1-q) $, respectively.

\begin{remark}\label{rem:cwzqDq}
	If comparable weak $ Y(q) $ is satisfied on some open set $ U $ then $ D(q) $ is satisfied in $ U $. In fact, let $ \Upsilon_1=[b^{rs}] $ and $ \Upsilon_2=[B^{rs}] $ {satisfy} the comparable weak $ Z(q) $ and comparable weak $ Z(n-1-q) $, respectively. In particular we have
	\begin{align*}
			\sigma_q \geq \sum_{r,s=1}^{n-1} b^{rs}c_{sr} = \Tr(\Upsilon_1 [c_{jk}]) \quad,\qquad 
			\sigma_{n-1-q} \geq \sum_{r,s=1}^{n-1} B^{rs}c_{sr} = \Tr (\Upsilon_2 [c_{jk}]).
	\end{align*}
	Let $ \ve_1 $ and $ 1-\ve_2 $ be the smallest and largest eigenvalue of $ \Upsilon_1 $, respectively; $ A $ be a unitary matrix such that $ A^*CA =\diag\lla{\mu_1,\cdots ,\mu_{n-1}} $ in non-decreasing order ($ A $ exists at any point, but it can vary discontinuously in $ U $); $ \sigma^- =-\sum_{j=1}^{t} \mu_{jj}$, $ \sigma^+ = \sum_{j=t+1}^{n-1} \mu_{jj} $, and $ \lla{d_{11},\cdots ,d_{n-1\ n-1}} $ are the diagonal elements of $ A^*\Upsilon_1 A $. 
	Then 
	\begin{align*}
		\Tr(\Upsilon_1 C)&=\Tr( A^* \Upsilon_1 A A^*C A)  
		\geq \max_{1\leq j\leq n-1}\lla{d_{jj}} \sum_{j=1}^{t}\mu_{jj}+\min_{1\leq j\leq n-1}\lla{d_{jj}} \sum_{j=t+1}^{n-1} \mu_{jj}\\
		& \geq (1-\ve_2) \sum_{j=1}^{t}\mu_{jj}+ \ve_1\sum_{j=t+1}^{n-1} \mu_{jj} 
		\geq -\sigma^- +\ve(\sigma^+ +\sigma^-) 
	\end{align*}
	for $ \ve=\min\lla{\ve_1,\ve_2} $.  The penultimate inequality above follows by Schur-Horn Theorem (see \cite[Theorem 4.3.45]{HornJohnson13}). So we obtain the left inequality in \eqref{def:Dq}. Similarly, we obtain
	\begin{align*}
		\Tr(\Upsilon_2 C)& \geq -\sigma^- +\ve(\sigma^+ +\sigma^-),
	\end{align*}
	for some $ 0<\ve<1 $. Then
	\begin{align*}
	\sigma_{n-1-q}\geq -\sigma^- +\ve(\sigma^+ +\sigma^-) &\Longrightarrow \quad \Tr(C)+ \sigma^- -\ve(\sigma^+ +\sigma^-) \geq  \Tr(C)-\sigma_{n-1-q} \\
	& \Longrightarrow \quad \sigma^+ -\ve(\sigma^+ +\sigma^-) \geq \Tr(C)-\sigma_{n-1-q},
	\end{align*}
	and since $ \Tr(C)-\sigma_{n-1-q} $ represents the sum of any $ q $ eigenvalues of $ C $, we have the right inequality in \eqref{def:Dq}.
\end{remark}

As a consequence of Remark \ref{rem:cwzqDq}, {and the equivalence of $ Y(q) $ with \eqref{eq:equivyq}}, $ Y(q) $ holds if comparable weak $ Y(q) $ is satisfied when the Levi matrix has a nonvanishing eigenvalue.
Also, following a similar process, we can prove that if there is a unitary matrix $ A $ on an open set $ U $ such that $ A^*CA =\diag\lla{\mu_1,\cdots ,\mu_{n-1}} $ in non-decreasing order and $ A^*CA $ is varying smoothly in $ U $, then comparable weak $ Y(q) $ is fulfilled on $ U $ when $ D(q) $ is.

\begin{remark}\label{rem:eqDQcwyq} 	$ D(q) $ is equivalent to comparable weak $ Y(q) $ in the pseudoconvex case. In fact, in this case \eqref{def:Dq} becomes
	\begin{equation}\label{eq:Dqpseud}
	\ve \Tr[c_{jk}]\leq \sigma_q \leq (1-\ve) \Tr[c_{jk}].
	\end{equation}
	So if $ D(q) $ is satisfied, then both comparable weak $ Z(q) $ and comparable weak $ Z(n-1-q) $ are satisfied {with} $ \Upsilon=\ve Id $. Now assume comparable weak $ Z(q) $ is satisfied with $ \Upsilon$. Let $ \ve_1>0 $ be the smallest eigenvalue of $ \Upsilon $ and $ A $ be a unitary matrix such that $ A^*\Upsilon A =\diag\lla{\epsilon_1,\cdots,\epsilon_{n-1}}$. By pseudoconvexity, $ A^*CA $ is a positive semidefinite matrix and so
	\begin{align*}
	\sigma_q\geq\Tr(\Upsilon C)&= \sum_{j=1}^{n-1} \epsilon_j\Tr( I_j^j A^* CA )\geq \ve_1 \sum_{j=1}^{n-1}\Tr( I_j^j A^* CA ) = \ve_1 \Tr[C],
	\end{align*}
	where $ I_j^j $ is the matrix with $ 1 $ in $ j,j $ entry and zeroes everywhere else. 
	Likewise, if comparable weak $ Z(n-1-q) $ is satified there will exist $ 0<\ve_2<1 $ such that
	\begin{align*}
			Tr[C]-\sigma_{n-1-q} \leq (1-\ve_2) Tr[C].
	\end{align*}
	Selecting $ \ve=\min\lla{\ve_1,\ve_2} $ we obtain \eqref{eq:Dqpseud}.
\end{remark}

Thus, we can {view} our condition as an adaptation of $ D(q) $ for weak $ Y(q) $ manifolds.

\section{Subelliptic Estimates}

{In this section, we prove the following result about subelliptic estimates.}

\begin{thm}\label{thm:subell}
	Let $ M $ be a smooth, compact, orientable 
	CR manifold of hypersurface type. If the comparable weak $ Y(q) $ condition and the finite type property are satisfied in $ M $, then there exist constants $ C>0 $ and $ \ve>0 $ such that
	\begin{equation}\label{eq:subell}
	\norm{f}_{H^{\ve}_q(M)}^2 \leq C\pare{ \norm{\dbarb f}_{L^2_{q+1}(M)}^2+\norm{\dbarbs f}_{L^2_{q-1}(M)}^2 +\norm{f}_{L^2_q(M)}^2 }
	\end{equation}
	for every smooth $ (0,q) $-form $ f $.
\end{thm}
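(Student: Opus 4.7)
The strategy is a microlocal analysis in the spirit of Derridj \cite{Derridj91} and Koenig \cite{koenig02}, adapted to the non-pseudoconvex setting via the bivectors $\Upsilon_1,\Upsilon_2$ furnished respectively by comparable weak $Z(q)$ and comparable weak $Z(n-1-q)$. First I would fix a finite cover $\{U_\mu\}$ of $M$ with a subordinate partition of unity $\{\zeta_\mu\}$, each $U_\mu$ carrying an orthonormal local frame $\{L_j\}$ of $T^{1,0}(M)$ and the characteristic field $T$. On each $U_\mu$ I would then introduce a pseudodifferential partition of unity $\Psi^+ + \Psi^- + \Psi^0=\Id$, microlocalized, respectively, to conic neighborhoods of the positive $T$-ray, the negative $T$-ray, and the elliptic cone of $\Box_b$. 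The three symbols commute with the $L^2$-norm modulo operators of lower order.

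On the elliptic piece $\Psi^0\zeta_\mu f$, standard elliptic regularity of $\Box_b$ off the characteristic variety produces an $H^1$-bound, which is strictly stronger than the desired $\varepsilon$-gain. The nontrivial pieces are $\varphi^\pm := \Psi^\pm\zeta_\mu f$. For $\varphi^+$, the Kohn-Morrey-Hormander-type identity in microlocalized form reads schematically
\begin{equation*}
\|\dbarb\varphi^+\|^2+\|\dbarbs\varphi^+\|^2 \geq \sum_{j,J}\|\bar L_j\varphi^+_J\|^2 + \sum_{j,k,K}(c_{jk}\varphi^+_{jK},\varphi^+_{kK}) + (\text{l.o.t.}),
\end{equation*}
the favorable sign of the second term coming from microlocalization to the $+$ direction. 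To tame the potentially indefinite Levi-form term, I would perform a twisted integration by parts using the matrix $\Upsilon_1=[b_1^{jk}]$: effectively one rewrites the Levi quadratic form as $((\sigma_q-\Tr(\Upsilon_1 C))\varphi^+,\varphi^+)$ plus a residual quadratic expression in the $\bar L_j\varphi^+$. The first summand is pointwise nonnegative by \eqref{def:cwzb}, while the residual is controlled in operator norm by a strict fraction of $\sum_j\|\bar L_j\varphi^+\|^2$ thanks to the strict inequality in \eqref{def:cwza}, which forces the eigenvalues of $\Upsilon_1$ strictly inside $(0,1)$, and so is absorbable on the left. This yields $\sum_j\|\bar L_j\varphi^+\|^2\lesssim \|\dbarb f\|^2+\|\dbarbs f\|^2+\|f\|^2$. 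A symmetric argument on $\varphi^-$, with $L_j$ in place of $\bar L_j$ and $\Upsilon_2$ in place of $\Upsilon_1$, uses comparable weak $Z(n-1-q)$ to produce $\sum_j\|L_j\varphi^-\|^2\lesssim \|\dbarb f\|^2+\|\dbarbs f\|^2+\|f\|^2$.

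Combining the three microlocal pieces and invoking \eqref{eq:horobs} delivers
\begin{equation*}
\sum_{j=1}^{2n-2}\|X_j(\zeta_\mu f)\|_{L^2}^2 \lesssim \|\dbarb f\|_{L^2}^2 + \|\dbarbs f\|_{L^2}^2 + \|f\|_{L^2}^2,
\end{equation*}
modulo lower-order errors absorbable by $\|f\|_{L^2}^2$. The finite type hypothesis then triggers Hormander's subelliptic estimate \eqref{thm:horm} on each $U_\mu$, yielding an $L^2$-Sobolev gain of order $\varepsilon=1/m$, where $m$ is the commutator type order; summing over $\mu$ produces \eqref{eq:subell}. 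The main obstacle I anticipate is the bookkeeping in the twisted integration by parts: one must show that the quadratic residuals it produces really are a proper fraction of the $\bar L_j$- and $L_j$-norms on the left — which is exactly the role of the strict inequality in \eqref{def:cwza}, as opposed to the merely weak inequality \eqref{wzq1} of Definition \ref{defn:weak Z(q)} — while simultaneously keeping control of the commutators between $\Psi^\pm$, the partition functions $\zeta_\mu$, the variable coefficients of the $\Upsilon_i$, and the vector fields $\bar L_j$, $L_j$, so that every error term lies at an order strictly lower than the quantities being controlled.
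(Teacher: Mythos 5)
Your outline follows the same route as the paper: the microlocal decomposition $P^{+},P^{-},P^{0}$, a weighted (``twisted'') integration by parts with the matrices furnished by comparable weak $Z(q)$ and $Z(n-1-q)$, a G\r{a}rding-type lower bound for the resulting Levi terms, and finally H\"ormander's estimate \eqref{thm:horm} with \eqref{eq:horobs} and a partition of unity. However, there is a genuine gap at the step ``combining the three microlocal pieces and invoking \eqref{eq:horobs}''. Your argument only produces bounds for $\sum_j\norm{\lb_j\varphi^{+}}^2$ and $\sum_j\norm{L_j\varphi^{-}}^2$, whereas \eqref{eq:horobs} applied to $\zeta_\mu f$ requires all $2n-2$ real directions, i.e.\ control of $\lb\varphi^{+}$, $L\varphi^{+}$, $\lb\varphi^{-}$, and $L\varphi^{-}$ (plus the elliptic piece). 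In the pseudoconvex setting the missing two come for free, since $\sum_j\norm{L_j u}^2=\sum_j\norm{\lb_j u}^2-(\Tr(C)\,Tu,u)+\ldots$ and on the $+$ microlocalization the trace term has a good sign; here $M$ is not pseudoconvex, $\Tr(C)$ may change sign, and $L\varphi^{+}$, $\lb\varphi^{-}$ are simply not controlled by what you established. Consequently your displayed bound for $\sum_{j}\norm{X_j(\zeta_\mu f)}^2_{L^2}$ does not follow from the preceding steps.

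This is exactly where the strict positivity in \eqref{def:cwza} is used in the paper for a second purpose beyond absorbing residuals: after the twisted integration by parts the terms $\norm{L_\Upsilon P^{+}f}^2$ and $\norm{\lb_{\tilde\Upsilon}P^{-}f}^2$ appear with a favorable sign in the adapted inequality \eqref{eq:mainid7}, and the strictly positive lower eigenvalue bound on $\Upsilon,\tilde\Upsilon$ gives $\norm{L_\Upsilon P^{+}f}^2\geq 2\delta\norm{LP^{+}f}^2$ and $\norm{\lb_{\tilde\Upsilon}P^{-}f}^2\geq 2\delta\norm{\lb P^{-}f}^2$ (see \eqref{eq:mainid12}); this is what supplies the missing directions and yields the full conclusion $\norm{\lb f}^2+\norm{Lf}^2\lesssim Q_b(f,f)+\norm{f}^2$ of Proposition \ref{prop:control1}, after which \eqref{thm:horm}, \eqref{eq:horobs} and patching give Theorem \ref{thm:subell}. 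To repair your proof you must extract these two extra terms from the weighted integration by parts rather than discarding them into the residual. A lesser omission: the pointwise inequality $\sigma_q-\Tr(\Upsilon_1 C)\geq 0$ concerns the matrix symbol of a first-order term ($T$ is still present); turning it into an operator lower bound requires the sharp G\r{a}rding inequality, as in \eqref{eq:mainid13}--\eqref{eq:mainid14}, not pointwise nonnegativity alone.
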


Observe that by \eqref{thm:horm} and \eqref{eq:horobs}, it is sufficient to control the $ L^2 $ norms of $ L_j f $ and $ \bar{L}_j f $ by the energy $ Q_b(f,f):=\norm{\dbarb f}_{L^2_{q+1}}^2+\norm{\dbarbs f}_{L^2_{q-1}}^2  $. We do this locally in Proposition \ref{prop:control1} using our comparability condition. To achieve the latter, we adapt an existing Morrey-Kohn type inequality \eqref{ec:mainid0} to one which allows us to use our comparability hypothesis, inequality \eqref{eq:mainid7}. We employ subtle integrations by parts and use microlocal tools to obtain \eqref{eq:mainid7}.

Since {the degree of the forms to which we apply the} norms $ \norm{\cdot}_{L^2} $, $ \norm{\cdot}_{L^2_q} $, $ \norm{\cdot}_{L^2_{q-1}} $, and $ \norm{\cdot}_{L^2_{q+1}} $, we will use just $ \norm{ \cdot } $ to denote them {in} this section. Also, we use $ \pare{\cdot,\cdot} $ to denote the inner product associated to $ \norm{ \cdot } $.

\subsubsection{Local setting and pseudodifferential operators}
Let $ \lla{\omega_j}_j $ be the dual forms of an orthonormal basis $ \lla{L_j}_{1\leq j\leq n-1} $ for $ T^{1,0}(M) $ in an open set $ U\subset M $, $ f=\sum_{J\in \I_q}f_J \bar{\omega}^J $ be a smooth $ (0,q) $-form with compact support in  $ U $,  and $ T $ be the vector used to define the Levi form.
We utilize the next notations
\begin{align*}
	{\norm{\bar L f}^2}&:= \sum_{J\in\I_q}\sum_{j=1}^{n-1}\norm{\lb_jf_J}^2, \qquad &
	{\norm{L f}^2}\ &:= \sum_{J\in\I_q}\sum_{j=1}^{n-1}\norm{\lba_j f_J }^2,\\
	\norm{\lb_\Upsilon f}^2&:=\sum_{J\in\I_q}\sum_{j,k=1}^{n-1}\pare{b^{\ba{k}j}\lb_kf_J,\lb_jf_J},
	\qquad&\norm{L_\Upsilon f}^2&:=\sum_{J\in\I_q}\sum_{j,k=1}^{n-1}\pare{\bkj \lba_j f_J,\lba_k f_J},
\end{align*}
where $ \Upsilon=i\sum_{j,k=1}^{n-1}b^{\bar{k}j}\lb_k\wg L_j $ is some real $ (1,1) $ vector defined on $ U $.
Locally we have $ \bar{L}^*_j=-L_j+a_j $ for some smooth functions $ a_j $, and
\[
K Q_b(f,f) \geq 
\norm{\bar L f}^2  +\sum_{I\in \I_q}\sum_{j,m=1}^{n-1}\pare{ [\bar{L}_m, \bar{L}^*_j]f_{jI},f_{mI}}-C\norm{f}^2 ,
\]
where $ K>1 $ and $ C>0 $ are constants just depending on $ U $ and $ a_j $'s functions.
If $ [L_j,\bar{L}_m] = c_{jm}T+ \sum_{l=1}^{n-1} \pare{ d_{jm}^l  L_l - \overline{d_{jm}^l} \bar{L}_l } $ for some smooth functions $ d_{jm}^l $ in $ U $, then we have
\begin{equation}
K Q_b(f,f) \geq \norm{\bar L f}^2 +
\re \sum_{I\in \I_{q-1}}\sum_{j,m=1}^{n-1}\pare{c_{jm}Tf_{jI},f_{mI}} + O((\norm{\bar L f}+ \norm{f})\norm{f}). \label{ec:mainid0}
\end{equation}

Denote by $ (\xi,\tau) $ the coordinates of dual space of $ \R^{2n-1}(x,t) $, and $ \sigma(T)=\tau $ (symbol of $ T $). Define the smooth functions
\[
\psi^+(\xi,\tau)=
\begin{cases}
0\ ,& \tau<0 \\
1\ ,& \tau\geq 1,
\end{cases}
\qquad
\psi^-(\xi,\tau)=
\begin{cases}
0\ ,& \tau>0 \\
1\ ,& \tau\leq -1,
\end{cases}
\qquad 
\psi^0=1-\psi^+-\psi^- .
\]
Let $ \theta $ be a compactly supported smooth function such that $ \theta \equiv 1 $ on $ U $.
Let $ \Psi^+ $, $ \Psi^- $ and $ \Psi^0 $ be the pseudodifferential operators of order zero with symbols $ \psi^+ $, $ \psi^- $ and $ \psi^0 $ respectively, and $ P^+ $, $ P^- $ and $ P^0 $ be the pseudodifferential operators of order zero defined by
\begin{align*}
P^+ f=\theta \Psi^+ f \qquad , \qquad  P^-f =\theta \Psi^- f \qquad , \qquad P^0 f=\theta \Psi^0 f .
\end{align*}

\subsubsection{Integrating by parts}
Let $ \Upsilon=i\sum_{j,k=1}^{n-1}b^{\bar{k}j}\lb_k\wg L_j $ and $ \tilde\Upsilon=i\sum_{j,k=1}^{n-1}B^{\bar{k}j}\lb_k\wg L_j $ be two real $ (1,1) $ vector fields defined on $ U $.
By definition of the operators above, we have
\begin{equation}\label{eq:decompose}
\norm{\lb_jf_J}^2=\norm{\lb_j P^+f_J}^2+\norm{\lb_j P^-f_J}^2+\norm{\lb_j P^0f_J}^2+\text{ ``cross terms'' }
\end{equation}
where the cross terms include terms of {the} type
\[
\pare{\lb_j P^+f_J,\lb_j P^-f_J},\ \pare{\lb_j P^0f_J,\lb_j P^-f_J}, \ \pare{\lb_j P^+f_J,\lb_j P^0f_J}.
\]

We can control {the latter by}
\begin{align*}
\pare{\lb_j P^+f_J,\lb_j P^0f_J}&
= \pare{(P^0)^*P^+ \lb_j f_J,\lb_jf_J}+O((\norm{\bar Lf}+\norm{\bar LP^+f})\norm{f})\\
& = O( (Q_b(f,f)+\norm{f}^2)^{\frac{1}{2}}\norm{\bar Lf}+(\norm{\bar Lf}+\norm{\bar LP^+f})\norm{f}+\norm{f}^2).
\end{align*}
In the last equality, we use the fact that, up to smooth terms, the symbol of $ (P^0)^*P^+ $ is {supported when} $ \abs{\tau}\leq 1 $, and the ellipticity of $ Q_b $ in complex directions. This ellipticity of $ Q_b $ also implies that $ \norm{\lb_j P^0f_J}^2 = O(Q_b(f,f)+\norm{f}^2)$. Because of the support of {the symbols of} $ P^- $ and $ P^+ $, the operator $ (P^-)^*P^+ $ is of order $ \leq -1 $. So we obtain
\begin{align*}
\pare{\lb_j P^+f_J,\lb_j P^-f_J}&
= \pare{(P^-)^*P^+ \lb_j f_J,\lb_jf_J}+O((\norm{\bar Lf}+\norm{\bar LP^-f})\norm{f})\\
& = O((\norm{\bar Lf}+\norm{\bar LP^-f})\norm{f}+\norm{f}^2).
\end{align*}

To avoid {an overload of error terms}, we will use $ E(f) $ to represent terms of type
\begin{align*}
O((\norm{\bar Lf}+\norm{\bar LP^+f}+\norm{\bar LP^-f})\norm{f}+\norm{f}^2),\
O((\norm{Lf}+\norm{LP^+f}+\norm{LP^-f})\norm{f}+\norm{f}^2),\\
O( (Q_b(f,f)+\norm{f}^2)^{\frac{1}{2}}\norm{\bar Lf}+Q_b(f,f)+\norm{f}^2). \qquad\qquad\qquad\qquad
\end{align*}
These terms are benign in the end. After summing in $ j $ and $ J $, from \eqref{eq:decompose}, we have
\begin{equation}\label{ec:mainid1} 
\norm{\bar Lf}^2=\norm{\bar LP^+f}^2+\norm{\bar LP^-f}^2+ E(f).
\end{equation}
In the same way, we have
\begin{equation}\label{ec:mainid11} 
\norm{Lf}^2=\norm{LP^+f}^2+\norm{LP^-f}^2+ E(f).
\end{equation}
{We introduce $ \Upsilon $ an then integrate by part ot obtain}
\begin{align}
\norm{\bar L P^+f}^2&
=\norm{\bar LP^+f}^2 - \norm{\lb_\Upsilon P^+f}^2 
 + \norm{L_\Upsilon P^+f }^2 \nn \\
&\quad  - \sum_{J\in \I_q}\sum_{j,m=1}^{n-1}\pare{b^{jm}c_{jm}TP^+f_{J}, P^+f_{J}} + E(f).
\label{ec:mainid2}
\end{align}
Similarly, performing {integration} by parts again, we have
\begin{align}
\norm{\bar L P^-f}^2&=\norm{LP^-f}^2 - \sum_{J\in \I_q}\sum_{j=1}^{n-1}\pare{c_{jj}TP^-f_{J}, P^-f_{J}} + E(f) \nn
\\
&=  \norm{LP^-f}^2 -  \norm{L_{\tilde\Upsilon} P^-f}^2 + \norm{ \lb_{\tilde\Upsilon} P^- f }^2 
 +  \sum_{J\in \I_q}\sum_{j,k=1}^{n-1}\pare{B^{kj}c_{jk}TP^-f_{J}, P^-f_{J}} \nn \\ 
&\quad - \sum_{J\in \I_q}\sum_{j=1}^{n-1}\pare{c_{jj}TP^-f_{J}, P^-f_{J}} + E(f). \label{ec:mainid4}
\end{align}

On the other hand
\begin{align*} 
\pare{c_{jm}Tf_{jI},f_{mI}} &=  \pare{c_{jm}TP^+f_{jI},P^+f_{mI}} + \pare{c_{jm}TP^-f_{jI},P^-f_{mI}} \\
&\quad+ \pare{c_{jm}TP^0f_{jI},P^0f_{mI}}+\text{cross terms},
\end{align*}
where {the} cross terms include
\begin{align*}
 &\pare{c_{jm}TP^+f_{jI},P^-f_{mI}} ,\ \pare{c_{jm}TP^+f_{jI},P^0f_{mI}} , \ \pare{c_{jm}TP^-f_{jI},P^+f_{mI}} , \ \pare{c_{jm}TP^+f_{jI},P^0f_{mI}}\\
 & \pare{c_{jm}TP^0f_{jI},P^+f_{mI}}, \ \pare{c_{jm}TP^0f_{jI},P^-f_{mI}}.
\end{align*}
Since $ P^+P^- $ is of order $ \leq -1 $, up to smooth terms, the symbols of operators $ TP^0 $, $ P^0P^+ $ and $ P^0P^- $ are supported {when} $ \abs{\tau}\leq 1 $, and from the ellipticity of $ Q_b $ {in the complex} directions, we have
\begin{align}\label{ec:mainid5} 
	\pare{c_{jm}Tf_{jI},f_{mI}} =  \pare{c_{jm}TP^+f_{jI},P^+f_{mI}} + \pare{c_{jm}TP^-f_{jI},P^-f_{mI}} + E(f).
\end{align}

Putting \eqref{ec:mainid1}, \eqref{ec:mainid2}, \eqref{ec:mainid4} and \eqref{ec:mainid5} into \eqref{ec:mainid0} we have
\begin{align}
KQ_b(f,f) &+ C\norm{f}^2 \geq \norm{ \bar L P^+f }^2 - \norm{ \lb_{\Upsilon} P^+f }^2 + \norm{ L_{\Upsilon} P^+f }^2 \nn \\
& +\norm{ L P^-f }^2 - \norm{ L_{\tilde\Upsilon} P^-f }^2 + \norm{ \lb_{\tilde\Upsilon} P^-f }^2 \nn \\
& + \sum_{I\in \I_{q-1}}\sum_{j,m=1}^{n-1} \pare{c_{jm}TP^+f_{jI},P^+f_{mI}} -  \sum_{J\in \I_{q}}\sum_{j,m=1}^{n-1} \pare{b^{mj}c_{jm}TP^+f_{J},P^+f_{J}} \nn \\
& + \sum_{I\in \I_{q-1}}\sum_{j,m=1}^{n-1} \pare{c_{jm}TP^-f_{jI},P^-f_{mI}} +  \sum_{J\in \I_{q}}\sum_{j,m=1}^{n-1}  \pare{B^{mj}c_{jm}TP^-f_{J},P^-f_{J}} \nn \\ 
& - \sum_{J\in \I_{q}}\sum_{j=1}^{n-1} \pare{c_{jj}TP^-f_{J},P^-f_{J}} + E(f).
\label{eq:mainid6}
\end{align}

Let $ H=[h_{jk}] $ be the matrix with entries
$
h_{jk} =  \delta_j^k \pare{ Tr[c_{rs}] - \sum_{r,s=1}^{n-1} B^{rs}c_{rs} } /{q} - c_{jk}.
$
Observe that if $ \lambda $ is an eigenvalue of $ [c_{jk}] $, then $ \pare{ Tr[c_{rs}] - \sum_{r,s=1}^{n-1} B^{rs}c_{rs}} /q - \lambda $ is an eigenvalue of $ H $. So, if the  sum of any $ n-q-1 $ eigenvalues of the matrix $ c_{jk} - \sum_{rs} B^{rs}c_{rs}/(n-q-1) $ is nonnegative, then the sum of any $ q $ eigenvalues of $ H $ is {nonnegative}, and vice versa. From \eqref{eq:mainid6}, we have our adapted Morrey-Kohn type inequality
\begin{align}
K&Q_b(f,f) + C\norm{f}^2 \geq \norm{ \bar L P^+f }^2 - \norm{ \lb_{\Upsilon} P^+f }^2 + \norm{ L_{\Upsilon} P^+f }^2 \nn \\
& +\norm{ L P^-f }^2 - \norm{ L_{\tilde\Upsilon} P^-f }^2 + \norm{ \lb_{\tilde\Upsilon} P^-f }^2 \nn \\
& + \sum_{I\in \I_{q-1}}\sum_{j,m=1}^{n-1} \pare{ \pare{ c_{jm} -\delta_j^k \tfrac{  \sum b^{rs}c_{sr}}{q} } TP^+f_{jI},P^+f_{mI}} \nn \\
& - \sum_{I\in \I_{q-1}}\sum_{j,m=1}^{n-1} \pare{h_{jm}TP^-f_{jI},P^-f_{mI}} +E(f).
 \label{eq:mainid7}
\end{align}

 \subsubsection{Using the comparable condition}

\begin{proposition}\label{prop:control1}
	Let $ M $ be a smooth CR manifold of hypersurface type.
	If the comparable weak $ Y(q) $ condition is satisfied at an open set $ U\subset M $, then
	\begin{equation}
	\norm{\bar L f}^2+\norm{ L f }^2 \lesssim Q_b(f,f)+\norm{f}^2, \qquad \label{eq:mainid8}
	\end{equation}
	for all smooth $ (0,q) $-form $ f $ supported on $ U $.
\end{proposition}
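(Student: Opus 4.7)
The plan is to combine the microlocal Morrey-Kohn identity \eqref{eq:mainid7} with the comparable weak $Y(q)$ hypothesis so that every term on the right either contributes positively to $\norm{\lb f}^2+\norm{L f}^2$ or can be absorbed into $Q_b(f,f)+\norm{f}^2$. Let $\Upsilon=[b^{\bar k j}]$ and $\tilde\Upsilon=[B^{\bar k j}]$ be Hermitian matrices witnessing comparable weak $Z(q)$ and $Z(n-1-q)$ on $U$. By (a) in Definition \ref{def:comp} both have eigenvalues strictly in $(0,1)$; by compactness (shrinking $U$ if needed) there exists $\ve_0>0$ such that all these eigenvalues lie in $[\ve_0,1-\ve_0]$ uniformly on $U$. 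Consequently
\[
\norm{\lb P^+f}^2-\norm{\lb_\Upsilon P^+f}^2 \geq \ve_0 \norm{\lb P^+f}^2, \qquad \norm{L_\Upsilon P^+f}^2 \geq \ve_0 \norm{L P^+f}^2,
\]
and likewise on the $P^-$-piece with $\tilde\Upsilon$ replacing $\Upsilon$.

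The heart of the proof will be showing that the two Levi-form terms in \eqref{eq:mainid7} are nonnegative modulo $E(f)$. On $(0,q)$-forms, the Hermitian form $\sum_{I\in\I_{q-1}}\sum_{j,m} a_{jm}\phi_{jI}\ba{\psi_{mI}}$ has, at each point, eigenvalues equal to the sums of any $q$ eigenvalues of the Hermitian matrix $[a_{jm}]$. Applied with $a_{jm}=c_{jm}-\delta_j^m\Tr(\Upsilon C)/q$, the smallest of these sums equals $\sigma_q-\Tr(\Upsilon C)\geq 0$ by (b) in Definition \ref{def:comp}. Coupling this with the positivity of $T$ on $\Ran P^+$ (the symbol $\tau$ of $T$ is $\geq 0$ on $\supp\psi^+$, so a fiberwise sharp G{\aa}rding inequality with the smoothly varying matrix weight produces positivity up to an $E(f)$ error) gives
\[
\sum_{I\in\I_{q-1}}\sum_{j,m=1}^{n-1} \pare{\pare{c_{jm}-\delta_j^m\tfrac{\Tr(\Upsilon C)}{q}} TP^+f_{jI}, P^+f_{mI}} \geq -C\norm{f}^2+E(f).
\]
An identical argument for $H=[h_{jm}]$ uses that any sum of $q$ eigenvalues of $H$ is at least $(\Tr C-\Tr(\tilde\Upsilon C))-(\Tr C-\sigma_{n-1-q})=\sigma_{n-1-q}-\Tr(\tilde\Upsilon C)\geq 0$, coupled now with the \emph{negativity} of $T$ on $\Ran P^-$, to show $-\sum h_{jm}(TP^-f_{jI},P^-f_{mI})\geq -C\norm{f}^2+E(f)$.

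Plugging these four positivity statements into \eqref{eq:mainid7} and applying \eqref{ec:mainid1}, \eqref{ec:mainid11} gives
\[
KQ_b(f,f)+C\norm{f}^2 \geq \ve_0\pare{\norm{\lb f}^2+\norm{L f}^2}+E(f).
\]
Each term inside $E(f)$ is handled by Cauchy-Schwarz with a small constant $\delta\ll\ve_0$ on the $\norm{\lb f}$, $\norm{L f}$, $\norm{\lb P^\pm f}$, $\norm{L P^\pm f}$ factors (using $\norm{\lb P^\pm f}\leq\norm{\lb f}+O(E(f)^{1/2})$ by \eqref{ec:mainid1}) and a large constant on $\norm{f}^2$; the resulting small multiples of $\norm{\lb f}^2+\norm{L f}^2$ are absorbed on the right, and the $Q_b(f,f)$-contributions coming from the third shape of $E(f)$ are swallowed by enlarging $K$. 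This yields \eqref{eq:mainid8}. The main technical obstacle I anticipate is the fiberwise matrix-valued G{\aa}rding step: turning the algebraic lower bound on sums of $q$ eigenvalues of $[c_{jm}-\delta_j^m\Tr(\Upsilon C)/q]$ (and of $[h_{jm}]$) into an operator-level lower bound against $TP^\pm$, while keeping the resulting symbol errors small enough to be absorbed into $E(f)$.
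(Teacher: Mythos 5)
Your proposal is correct and follows essentially the same route as the paper: use condition (a) to bound the $\Upsilon$- and $\tilde\Upsilon$-modified norms below by fixed fractions of $\norm{\bar L P^\pm f}^2$ and $\norm{L P^\pm f}^2$, use condition (b) together with the sharp G\aa rding inequality (with the correct signs of $\tau$ on the supports of $\psi^\pm$) to control the two Levi-form terms in \eqref{eq:mainid7} modulo $\norm{f}^2$, then combine with \eqref{ec:mainid1}, \eqref{ec:mainid11} and absorb $E(f)$ by a small constant/large constant argument. Your extra explanation of why the $(0,q)$-form quadratic form has eigenvalues given by sums of $q$ eigenvalues of the coefficient matrix is exactly the justification the paper leaves implicit when invoking G\aa rding.
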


\begin{proof}
	Consider  $ \Upsilon $ coming from the comparable weak $ Z(q)$ condition and $ \tilde\Upsilon $ from the comparable weak $ Z(n-q-1) $ condition. By A) in Definition \ref{def:comp}, the matrices $ [\delta_j^k - b^{jk}] $ and $ [\delta_j^k - B^{jk}] $ have eigenvalues between $ 0 $ and $ 1 $. So there exists $ \delta>0 $ such that
\begin{align}
	\norm{\bar LP^+f}^2- \norm{\lb_\Upsilon P^+ f}^2&\geq 2\delta \norm{\bar LP^+ f}^2, \quad
	\norm{LP^-f}^2- \norm{L_{\tilde\Upsilon} P^- f}^2\geq 2\delta \norm{LP^- f}^2,  \label{eq:mainid9}\\
	\norm{ L_\Upsilon P^+ f }^2&\geq 2\delta \norm{L P^+ f}^2, \quad
	\norm{ \lb_{\tilde\Upsilon} P^- f }^2\geq 2\delta \norm{\bar L P^- f}^2 .\label{eq:mainid12}
\end{align}
From B) in Definition \ref{def:comp} and by sharp G\r{a}rding inequality we have for some constant $ C>0 $
\begin{align}
\re \lla{ \sum_{I\in \I_{q-1}}\sum_{j,m=1}^{n-1} \pare{ \pare{ c_{jm} -\delta_j^k \frac{  \sum b^{rs}c_{sr}}{q} } TP^+f_{jI},P^+f_{mI}} } & \geq - C \sum_{J\in \I_ q}\norm{ P^+ f_J }^2, \label{eq:mainid13} \\
\re\lla{ \sum_{I\in \I_{q-1}}\sum_{j,m=1}^{n-1} \pare{h_{jm}TP^-f_{jI},P^-f_{mI}} } & \geq - C \sum_{J\in \I_ q}\norm{ P^- f_J }^2. \label{eq:mainid14}
\end{align}

Taking \eqref{eq:mainid9}--\eqref{eq:mainid14}, \eqref{ec:mainid1} and \eqref{ec:mainid11} into \eqref{eq:mainid7} we obtain
\begin{align*}
KQ_b(f,f) + C\norm{f}^2 &\geq \delta\pare{\norm{\bar Lf}^2+\norm{Lf}^2} \\
&\quad +\delta\pare{ \norm{\bar LP^+ f}^2 + \norm{LP^- f}^2 +\norm{L P^+ f}^2+ \norm{\bar LP^-f}^2}+E(f).
\end{align*}
Using a small constant/large constant argument, we can absorb terms appearing in $ E(f) $ into the others that appear in this last inequality. This proves \eqref{eq:mainid8}.
\end{proof}

\begin{proof}[Proof of Theorem \ref{thm:subell}]
	 We get \eqref{eq:subell} locally by \eqref{thm:horm}, \eqref{eq:horobs}, and Proposition \ref{prop:control1} for some $ \ve>0 $ and $ C>0 $. We obtain the global estimate through a partition of unity argument on $ M $.
\end{proof}

When $ M $ is a weak $ Y(q) $ manifold, {it was observed in \cite{CoRa21}, that the points in $ M $ are divided into two sets, one where $ Y(q) $ is satisfied and one where} it is not. These last points are called by weak $ Y(q) $ points. 
Since we {proved} \eqref{eq:subell} locally and $ 1/2 $-subelliptic estimates hold where $ Y(q) $ is satisfied, we can localize the finite type and our comparable condition on neighborhoods of weak $ Y(q) $ points. We establish this in the next statement.

\begin{prop}\label{thm:subelllocalconditions}
	Let $ M $ be  a smooth, compact, orientable weak $ Y(q) $ manifold of hypersurface type. Let $ S $ be the set of weak $ Y(q) $ points. Suppose $ S $ is compact, the comparable weak-$ Y(q) $ condition and the finite type property are satisfied on a neighborhood of $ S $. There exist constants $ C>0 $ and $ \ve>0 $ such that
	\begin{equation}\label{eq:subelllocalconditions}
	\norm{f}_{H^{\ve}_q(M)}^2 \leq C\pare{ \norm{\dbarb f}_{L^2_{q+1}(M)}^2+\norm{\dbarbs f}_{L^2_{q-1}(M)}^2 +\norm{f}_{L^2_q(M)}^2 }
	\end{equation}
	for every smooth $ (0,q) $-form $ f $.
\end{prop}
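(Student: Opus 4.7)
The plan is to combine the local argument used in the proof of Theorem \ref{thm:subell} with the classical $1/2$-subelliptic estimate available at $Y(q)$ points, gluing the two via a partition of unity on $M$.

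First, I would set up the geometric decomposition. By hypothesis, $S$ is compact and there is an open neighborhood $V$ of $S$ on which both the comparable weak $Y(q)$ condition and the finite type property hold. Choose an auxiliary open set $V'$ with $S\subset V'\subset\overline{V'}\subset V$, and set $W:=M\setminus\overline{V'}$. Since $S$ is precisely the set of weak $Y(q)$ points, at every $x\in W$ the full condition $Y(q)$ holds. Thus $\{V,W\}$ is an open cover of the compact manifold $M$.

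Second, I would obtain matching local subelliptic estimates on forms supported in each piece of the cover. For a smooth $(0,q)$-form $u$ supported in $V$, the arguments of Section $3$, namely Proposition \ref{prop:control1} combined with \eqref{thm:horm} and \eqref{eq:horobs}, apply verbatim (they use the comparable weak $Y(q)$ hypothesis only at points in $\supp u\subset V$ and the finite type of order $m$ on $V$) and give
\begin{equation*}
\|u\|_{H^{\varepsilon_1}_q}^{2}\lesssim \|\dbarb u\|^{2}+\|\dbarbs u\|^{2}+\|u\|^{2},
\end{equation*}
for some $\varepsilon_1>0$ (e.g.\ $\varepsilon_1=1/m$). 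For a smooth $(0,q)$-form $u$ supported in $W$, Kohn's theorem in \cite{Kohn1965} applies because $Y(q)$ holds at every point of $\supp u$, producing the $\tfrac12$-subelliptic estimate
\begin{equation*}
\|u\|_{H^{1/2}_q}^{2}\lesssim \|\dbarb u\|^{2}+\|\dbarbs u\|^{2}+\|u\|^{2}.
\end{equation*}

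Third, I would glue via a smooth partition of unity $\{\chi_V,\chi_W\}$ subordinate to $\{V,W\}$. Writing $f=\chi_V f+\chi_W f$ and applying the appropriate local estimate to each piece, one controls the commutator terms $[\dbarb,\chi_\bullet]f$ and $[\dbarbs,\chi_\bullet]f$ by $\|f\|_{L^2_q}$ since these commutators are zeroth-order multiplication operators with smooth coefficients. Setting $\varepsilon:=\min\{\varepsilon_1,\tfrac12\}>0$, the two local estimates combine into the global bound \eqref{eq:subelllocalconditions}.

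The main (and quite minor) obstacle is verifying that the argument of Section $3$ truly localizes to $V$: the microlocal decomposition $\Psi^{+},\Psi^{-},\Psi^{0}$ and the sharp Gårding step \eqref{eq:mainid13}–\eqref{eq:mainid14} depend only on the Levi form and on $\Upsilon,\tilde\Upsilon$ on $V$, and the error terms $E(f)$ are absorbed by the same small constant/large constant argument as in the proof of Proposition \ref{prop:control1}. Once this is checked, the partition-of-unity gluing is entirely routine.
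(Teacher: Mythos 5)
Your proposal is correct and follows essentially the same route the paper intends: the text preceding Proposition \ref{thm:subelllocalconditions} explains that the estimate is obtained by localizing, using Proposition \ref{prop:control1} together with \eqref{thm:horm} and \eqref{eq:horobs} near the weak $Y(q)$ points and Kohn's $1/2$-subelliptic estimate where $Y(q)$ holds, then gluing by a partition of unity exactly as you do, with the commutator terms absorbed into $\norm{f}_{L^2_q}$.
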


Observe that, if we have closed range estimates at level of $ (0,q) $-forms, we will have \eqref{eq:subelllocalconditions} and \eqref{eq:subell} with $ \norm{H_q f}^2_{L^2_q(M)}$ instead of $ \norm{f}^2_{L^2_q(M)} $, where $ H_q $ represent the orthogonal projection on $ \Ker(\Box_b) $.

\subsection{Corollaries of subelliptic estimates}
Here we provide helpful Sobolev estimates to prove the maximal $ L^p $ estimates for the complex Green operator in the next section.

\begin{cor}\label{cor:2.2}
	Let $ U\subset M $,  $ \zeta,\zeta^\prime\in C^\infty_0(U) $ with $ \zeta \prec \zeta^\prime $, and $ s\geq 0 $. If the comparable weak-$ Y(q) $ and the finite type condition are satisfied at $ U $, then
	\begin{equation}\label{eq:2.3}
	\norm{\zeta u}^2_{H^{s+\ve}_{q}}\leq C_s	(\norm{\zeta^\prime \dbarb u}_{H^{s}_{q+1}}^2+\norm{\zeta^\prime \dbarbs u}^2_{H^{s}_{q-1}} +\norm{\zeta^\prime u}^2 )
	\end{equation}
	for all smooth $ (0,q) $-form $ u $.
\end{cor}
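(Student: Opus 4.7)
The plan is to induct on $s$, starting with the base case $s=0$ (which is essentially a direct application of Proposition \ref{thm:subelllocalconditions} after handling commutators with the cutoff), and then climbing to higher Sobolev levels in increments of $\ve$ via a pseudodifferential commutator argument. The role of the pair $\zeta\prec\zeta'$ is to allow room for a chain of intermediate cutoffs into which all commutator coefficients can be absorbed.

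\textbf{Base case ($s=0$).} Pick $\zeta_1$ with $\zeta\prec\zeta_1\prec\zeta'$ and apply Proposition \ref{thm:subelllocalconditions} to the smooth, compactly supported $(0,q)$-form $\zeta u$:
\[
\|\zeta u\|_{H^\ve_q}^2\;\lesssim\;\|\dbarb(\zeta u)\|^2+\|\dbarbs(\zeta u)\|^2+\|\zeta u\|^2.
\]
By Leibniz, $\dbarb(\zeta u)=\zeta\,\dbarb u+[\dbarb,\zeta]u$ and analogously for $\dbarbs$. The commutators are zeroth-order multiplication operators built from first derivatives of $\zeta$, hence supported where $\zeta'\equiv 1$; thus $\|[\dbarb,\zeta]u\|+\|[\dbarbs,\zeta]u\|\lesssim\|\zeta' u\|$, which yields \eqref{eq:2.3} for $s=0$.

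\textbf{Inductive step.} Suppose \eqref{eq:2.3} is known for all exponents in $[0,s]$ and all cutoff pairs; we upgrade it to $s+\ve$. Choose a chain $\zeta=\zeta_0\prec\zeta_1\prec\zeta_2\prec\zeta_3=\zeta'$ inside a coordinate chart of $M$ on which we may regard $\Lambda^s$ as a classical pseudodifferential operator of order $s$. Apply the base case to the compactly supported form $v:=\zeta_1\Lambda^s\zeta u$:
\[
\|v\|_{H^\ve_q}^2\;\lesssim\;\|\dbarb v\|^2+\|\dbarbs v\|^2+\|v\|^2,
\]
and note $\|\zeta u\|_{H^{s+\ve}_q}^2\lesssim \|v\|_{H^\ve_q}^2+(\text{l.o.t.})$, where the lower-order tail is $\lesssim\|\zeta_2 u\|_{H^s_q}^2$ and is handled by the inductive hypothesis (applied with cutoffs $\zeta_2\prec\zeta'$ and exponent $s$). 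Now commute $\Lambda^s$ successively through the cutoffs and through $\dbarb$, $\dbarbs$:
\[
\dbarb v\;=\;\zeta_1\Lambda^s\zeta\,\dbarb u\;+\;\zeta_1\Lambda^s[\dbarb,\zeta]u\;+\;\zeta_1[\dbarb,\Lambda^s]\zeta u\;+\;[\dbarb,\zeta_1]\Lambda^s\zeta u,
\]
and likewise for $\dbarbs v$. The first term is controlled by $\|\zeta'\dbarb u\|_{H^s_{q+1}}$. The second, having a zeroth-order coefficient supported where $\zeta_2\equiv1$, contributes $\lesssim \|\zeta_2 u\|_{H^s_q}$, which the inductive hypothesis bounds by the right-hand side of \eqref{eq:2.3}. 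The third is handled the same way, using that $[\dbarb,\Lambda^s]$ is pseudodifferential of order $s$ (since $\dbarb$ is first order). The fourth is an off-diagonal pseudodifferential tail: modulo a smoothing operator its symbol vanishes on $\supp\zeta_1\cap\supp\zeta_2$, so it is controlled by any Sobolev norm of $\zeta_2 u$, again absorbed by the inductive hypothesis. Collecting these estimates, together with $\|v\|^2\le\|\zeta_2 u\|_{H^s_q}^2$, gives \eqref{eq:2.3} at level $s+\ve$.

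\textbf{Main obstacle.} The bookkeeping in the inductive step is the main delicate point: one must ensure that every commutator or cutoff-derivative that falls out of the pseudodifferential calculus is supported on a set where the next cutoff in the chain is identically $1$, so it can be absorbed by the inductive hypothesis without losing the strict nesting $\zeta\prec\zeta'$. Interpolation between the discrete levels $s=k\ve$ then delivers \eqref{eq:2.3} for all real $s\ge 0$, and a partition-of-unity argument reduces the coordinate-chart setting used for $\Lambda^s$ to the intrinsic Sobolev norm on $M$.
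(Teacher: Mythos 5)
Your plan is correct and follows essentially the same route as the paper: apply the $\ve$-subelliptic estimate to the localized, tangentially smoothed form $\Lambda^s(\zeta u)$, control the commutators $[\dbarb,\Lambda^s]$, $[\dbarbs,\Lambda^s]$ and the cutoff commutators as order-$s$ (resp.\ zeroth-order) errors absorbed by small-constant/large-constant arguments and the inductive hypothesis, then induct in increments of $\ve$ and interpolate to reach all $s\geq 0$. Your explicit chain of nested cutoffs is just a more detailed bookkeeping of what the paper compresses into its ``induction process.''
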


\begin{proof}
	Let $ n\in\N $, $ s=n\ve $ and $ \tilde{u}=\zeta u $. Then
	\begin{align}\label{eq:cor10}
	\norm{\tilde{u}}_{H^{s+\ve}_{q}}^2\lesssim \norm{\Lambda^s\tilde{u}}_{H^{\ve}_{q}}^2 \lesssim \norm{ \dbarb \Lambda^s \tilde{u} }^2 + \norm{ \dbarbs \Lambda^s \tilde{u} }^2+\norm{\Lambda^s \tilde{u}}.
	\end{align}
	We start {by} estimating $ \norm{ \dbarb \Lambda^s \tilde{u} }^2 $. 
	Observe that
	\begin{align*}
		\norm{\dbarb \Lambda^s \tilde{u}}^2 &= \pare{ \Lambda^s\dbarb \tilde{u},\dbarb \Lambda^s\tilde{u} }+\pare{ \corc{\dbarb,\Lambda^s}\tilde{u}, \dbarb\Lambda^s\tilde{u} }, \\
	\abs{ \pare{ \Lambda^s\dbarb \tilde{u},\dbarb \Lambda^s\tilde{u} } } &\lesssim l.c. \norm{\dbarb\tilde{u}}_{H^{s}_{q+1}}^2+ s.c.\norm{\dbarb \Lambda^s \tilde{u}}^2 ,\\
	\abs{ \pare{ \corc{\dbarb,\Lambda^s}\tilde{u}, \dbarb\Lambda^s\tilde{u} } } &\lesssim \norm{P^s \tilde{u}}\norm{\dbarb\Lambda^s\tilde{u}}\leq l.c.\norm{\Lambda^s \tilde{u}}^2+s.c.\norm{\dbarb\Lambda^s\tilde{u}}^2 ,
	\end{align*}
	where $ P^s $ denotes an operator of order $ s $, $ l.c $ and $ s.c. $ denote some large and small constant, respectively. Appropriate small constants allow us to absorb the term $ \norm{\dbarb\Lambda^s\tilde{u}}^2 $ and obtain
	\begin{align}\label{eq:cor11}
		\norm{\dbarb \Lambda^s \tilde{u}}^2 \lesssim \norm{\dbarb\tilde{u}}_{H^{s}_{q+1}}^2 + \norm{\Lambda^s \tilde{u}}^2.
	\end{align}
	In the same way
	\begin{align}\label{eq:cor12}
	\norm{\dbarbs \Lambda^s \tilde{u}}^2 \lesssim  \norm{\dbarbs\tilde{u}}_{H^{s}_{q-1}}^2 + \norm{\Lambda^s \tilde{u}}^2.
	\end{align}
	Using \eqref{eq:cor11} and \eqref{eq:cor12} into \eqref{eq:cor10}, we have
	\[
	\norm{\zeta u}^2_{H^{s+\ve}_{q}} \lesssim \norm{\zeta'\dbarb{u}}_{H^{s}_{q+1}}^2+ \norm{\zeta'\dbarbs{u}}_{H^{s}_{q-1}}^2 + \norm{\zeta'{u}}_{H^{s}_{q}}^2.
	\]
	By \eqref{eq:subell}, an induction process, and Sobolev embedding theorems, we have \eqref{eq:2.3} for every $ s=n\ve  $. We obtain \eqref{eq:2.3} for every $ s\geq 0 $ using interpolation.
\end{proof}

	\begin{remark}
	{The combination of 1) the hypotheses of Theorem \ref{thm:subell}, 2) a closed range estimate at the level of $ (0,q) $-forms, 3) the usual elliptic regularization process, and 4)}
	the estimates in Corollary \ref{cor:2.2} allows us to imply that $ K_q:C^\infty_{0,q}(M)\rightarrow C^\infty_{0,q}(M) $. 
	{The same property holds for Szeg\"o projection $ S_q=I-\dbarbs\dbarb K_q$, $ S_q^\prime =I-\dbarb\dbarbs K_q $, 
	and $ H_q=S_q+S_q^\prime -I $.}
	\end{remark}

\begin{cor}\label{cor:2.3}
	Let $ U\subset M $,  $ \zeta,\zeta^\prime \in C^\infty_0(U) $ with $ \zeta \prec \zeta^\prime $.  Then
	\begin{equation*}\label{eq:cor2.31}
	\norm{\zeta \phi}_{H^{s+\ve/2}_{q-1}}\leq C_s(\norm{\zeta^\prime \dbarb \phi}_{H^{s}_{q}}+\norm{\phi}), \qquad \forall \ \text{smooth} \ \phi\perp \ker(\dbarb^{ 0,q-1})
	\end{equation*}
	and
	\begin{equation*}\label{eq:cor2.32}
	\norm{\zeta \phi}_{H^{s+\ve/2}_{q+1}}\leq C_s(\norm{\zeta^\prime \dbarbs \phi}_{H^{s}_{q}}+\norm{\phi}), \qquad \forall \ \text{smooth} \ \phi\perp \ker(\dbarb^{* \ 0,q+1})
	\end{equation*}
	{Here $ C_s $ is just depending on $ s $, $ \zeta $ and $ \zeta^\prime $.}
\end{cor}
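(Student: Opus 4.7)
My plan is to prove the first inequality; the second follows by the analogous argument, interchanging the roles of $\dbarb$ and $\dbarbs$ (and levels $q-1\leftrightarrow q+1$). The crux is the Hodge-type identity $\phi = \dbarbs K_q\dbarb\phi$, valid for smooth $\phi\perp\ker\dbarb^{0,q-1}$ under the closed-range hypothesis at level $q$ mentioned in the preceding remark, together with the subelliptic regularity of $u:=K_q\dbarb\phi$ supplied by Corollary \ref{cor:2.2}.

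First I would verify the identity. Because $\ker(\Box_b)\subset\ker\dbarbs^{0,q}$, orthogonality $\phi\perp\ker\dbarb^{0,q-1}$ forces $\dbarb\phi\perp\ker(\Box_b)$, so $u:=K_q\dbarb\phi$ is defined and satisfies $\Box_b u = \dbarb\phi$. Setting $\alpha:=\dbarbs u$, pairing with any $h\in\ker\dbarb^{0,q-1}$ shows $(\alpha,h)=(u,\dbarb h)=0$, whence $\alpha\perp\ker\dbarb^{0,q-1}$. Using the intertwining $\dbarb\Box_b=\Box_b\dbarb$ together with $\dbarb^2=0$,
\begin{equation*}
\norm{\dbarbs\dbarb u}^2 = (\dbarb\dbarbs\dbarb u,\dbarb u) = (\Box_b\dbarb u,\dbarb u) = (\dbarb\Box_b u,\dbarb u) = (\dbarb\dbarb\phi,\dbarb u) = 0,
\end{equation*}
so $\dbarbs\dbarb u = 0$ and consequently $\dbarb\alpha = \dbarb\dbarbs u = \Box_b u = \dbarb\phi$. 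Hence $\phi-\alpha\in\ker\dbarb\cap(\ker\dbarb)^\perp=\{0\}$.

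I would then estimate the Sobolev norm by a quadratic pairing. Using near-self-adjointness of $\Lambda$ and the identity,
\begin{equation*}
\norm{\zeta\phi}_{H^{s+\ve/2}_{q-1}}^2 \sim (\phi,\zeta\Lambda^{2s+\ve}\zeta\phi) = (u,\dbarb[\zeta\Lambda^{2s+\ve}\zeta\phi]) = (\zeta\Lambda^{2s+\ve}\zeta u,\dbarb\phi) + (u,[\dbarb,\zeta\Lambda^{2s+\ve}\zeta]\phi).
\end{equation*}
In the main term I would redistribute the weight as $\Lambda^{2s+\ve}\sim\Lambda^{s+\ve}\Lambda^s$ and apply Cauchy--Schwarz to obtain the bound $\norm{\zeta^\prime u}_{H^{s+\ve}_q}\cdot\norm{\zeta^\prime\dbarb\phi}_{H^s_q}$. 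The first factor is then handled by iterated applications of Corollary \ref{cor:2.2} to $u$ at level $q$, exploiting $\Box_b u = \dbarb\phi$ and $\dbarbs u = \phi$ together with the $L^2$-boundedness $\norm{u}\lesssim\norm{\dbarb\phi}$ of $K_q$ as a base case; this reduces $\norm{\zeta^\prime u}_{H^{s+\ve}_q}$ to a sum of $\norm{\zeta^{\prime\prime}\dbarb\phi}_{H^s_q}$ and a $\norm{\phi}$ remainder. A final Young-type absorption closes the inequality.

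The hard part will be controlling the commutator $(u,[\dbarb,\zeta\Lambda^{2s+\ve}\zeta]\phi)$: naively it brings in $\norm{\phi}_{H^{2s+\ve}}$, which would dominate the left-hand side. It must be tamed by exploiting the support condition $\zeta\prec\zeta^\prime$ (so derivatives of $\zeta$ live inside $\supp\zeta^\prime$ and are controlled by $\zeta^\prime$-localized norms), by an induction on $s$ paralleling the one in the proof of Corollary \ref{cor:2.2}, and by small-constant/large-constant absorption of the resulting higher-order remainders into the main term.
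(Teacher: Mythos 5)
Your proposal is correct in outline and follows essentially the route the paper intends (it simply defers to Koenig's Corollary 2.3): write $\phi=\dbarbs u$ with $u=K_q\dbarb\phi$ (so $\dbarb u=0$, $\dbarbs u=\phi$, $\Box_b u=\dbarb\phi$), pair $\phi$ against $\zeta\Lambda^{2s+\ve}\zeta\phi$, integrate $\dbarb$ by parts, bound the main term by $\norm{\zeta'\dbarb\phi}_{H^s_q}\norm{\zeta' u}_{H^{s+\ve}_q}$ via Corollary \ref{cor:2.2} applied to $u$, and induct on $s$ in steps of $\ve/2$ to handle the commutator. Two small repairs: the base bound should be $\norm{u}\lesssim\norm{\phi}$ (from $\norm{u}^2\lesssim Q_b(u,u)=(\dbarb\phi,u)=(\phi,\dbarbs u)=\norm{\phi}^2$) rather than $\norm{u}\lesssim\norm{\dbarb\phi}$, since the global $L^2$ norm of $\dbarb\phi$ is not controlled by the right-hand side of the corollary; and note that Corollary \ref{cor:2.2} applied to $u$ yields $\norm{\zeta''\phi}_{H^s_{q-1}}$ (not $\norm{\zeta''\dbarb\phi}_{H^s_q}$ directly), so this term, like the commutator, must be absorbed by invoking the inductive hypothesis of the present corollary at level $s-\ve/2$.
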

The proof of Corollary \ref{cor:2.3} follows the same lines as in Corollary 2.3 in \cite{koenig02}.

\section{Maximal $ L^p $ estimates}

In this section, in addition to comparable weak $ Y(q) $ and finite type conditions assumed in $ M $ as in the previous section, we also assume closed range estimates {at the level of $ (0,q) $-forms.} 
In particular, 
for each point in $ M $, there exists a neighborhood $ U $ where
we have the following two estimates
\begin{align}
	\norm{u}^2_{H^\ve_{q}} &\lesssim \sum_{K\in I_q}\sum_{j=1}^{2n-2}\norm{ X_{j}u_K }^2_{L^2} , \label{ineq:finitetype} \\
	\sum_{K\in I_q}\sum_{j=1}^{2n-2}\norm{ X_{j}u_K }^2_{L^2}&\lesssim\norm{\dbarb u}^2_{L^2_{q+1}}+\norm{\dbarbs u}^2_{L^2_{q-1}}+\norm{u}^2_{L^2} , \label{ineq:compweakyq}
\end{align}
for every smooth form $ u=\sum_{I\in \I_q} u_I \bar{\omega}^I$ with compact support in $ U $. Estimate \eqref{ineq:finitetype} comes from the finite type condition, and \eqref{ineq:compweakyq} from the comparable condition. 
The argument we follow appears in \cite{koenig02}. 
Unlike the latter, the closed range hypothesis at fixed form level $ q $ only allows us to guarantee the existence of $ K_q $ {(and not $ K_{q-1} $ or $ K_{q+1} $)}.
We endow $ M $ with a structure that turns it into a space of homogeneous type in the spirit of \cite{NagelSteinWainger}.

\subsection{Non isotropic metric}\label{sectionhomogeneoustype}
Let $ U $ be an open set in $ M $, $ \lla{L_j}_{j=1}^{n-1} $ be a basis of $ T^{1,0}(U) $, 
$X_j=(L_j+\bar{L}_j)/2$, $X_j=(L_j-\bar{L}_j)/(2i)$,
and $ T $ be vector used in the Levi form.
For each multi-index $ I= (i_1,...,i_k)\in \lla{1,...,2n-2}^k $ we define $ \lambda_I $ and $ \alpha_I^j $ on $ U $ such that
\[
[X_{i_k},[...,[X_{i_2},X_{i_1}]]]=-i\lambda_IT+\sum_{j=1}^{2n-2}\alpha_{I}^j X_j.
\]
For each $ x\in U $ and $ r>0 $, we make
\[
\Lambda_l(x)=\pare{ \sum_{2\leq |I|\leq l } |\lambda_{I}|^2 }^{1/2}
\quad \text{ and } \quad \Lambda(x,r)=\sum_{l=2}^{P}\Lambda_{l} (x) r^l .
\]
$ \Lambda^2_l $ is smooth, and $ \Lambda_P(x)\neq 0 $ in $ U $ because of the finite type condition. For $ x,y \in U$, the natural {non isotropic} distance between $ x$ and $ y $ corresponding to $ \lla{X_j} $ is given by
\begin{align*}
&\rho(x,y):=\inf \left\{ \delta >0 \ | \ \exists \phi:[0,1]\to U \text{ continuous and piecewise smooth }\right.
	\text{such that } \phi(0)=x ;\\
&\qquad \qquad  \phi(1)=y ;\ \text{ and } \phi^\prime(t) = \sum_{j=1}^{2n-2} \gamma_j(t)X_j \text{ a.e.}  \left. \text{ with } \vab{\gamma_j(t)}<\delta \text{ for each } 1\leq j\leq 2n-2 \right\} .
\end{align*}

Let denote $ B_0 $ the unit ball in $ \R^{2n-1} $ and define
\[
g_{x,r}(u)=\exp(ru_1X_1+...+ru_{2n-2}X_{2n-2}-\Lambda(x,r)u_{2n-1}iT )(x)
\]
for each $ x\in U $ and $ r $ sufficiently small such that $ g_{x,r} $ is a diffeomorphism on $ B_0 $. We write $ \tilde{B}(x,r) =g_{x,r}(B_0) $ and denote the Jacobian of $ g_{x,r} $ by $ J_{x,r} $. If $ f $ is a function on $ U $ we define  $ \hat{f}=f\circ g_{x,r} $ on $ B_0 $, and for a function $ \phi $ on $ B_0 $ we define $ \check{\phi}=\phi\circ g_{x,r}^{-1} $ on $ \tilde{B}(x,r) $. Also, we define the {scaled pullbacks} $ \hat{X}_j $ of vectors $ X_j $ by
$ \hat{X}_j=r g^*_{x,r}(X_j) $, where $ g^*_{x,r} $ is the formal pullback of $ g_{x,r}:B_0\rightarrow \tilde{B}(x,r) $. So, observe that 
\begin{equation}\label{eq:formscaling}
	 \pare{ X_j f }\hat{}=r^{-1}\hat{X}_j\hat{f}.
\end{equation}
With this configuration, the next result is established in \cite[Theorem 2.4]{koenig02}.

\begin{thm}\label{teo:2.4}
	Assume (only)  that $ U\subset M$  is of finite type. Then:
	\begin{enumerate}[(a)]
		\item $ B(x,c_1r)\subset \widetilde{B}(x,r)\subset B(x,c_2r)$ for constants $ c_1,c_2 $ independent of $ x $ and $ r $ (here the ball $ B(x,cr) $ is obtained throughout $ \rho $).
		\item $ |B(x,r)|\approx |\widetilde{B}(x,r)|\approx r^{2n-2}\Lambda(x,r) $ uniformly in $ x $ and $ r $.
		\item $ |J_{x,r}(u)|\approx r^{2n-2}\Lambda(x,r) $ uniformly in $ x $ and $ r $.
		\item $ |\frac{\p^\alpha}{\p u^\alpha}J_{x,r}(u)|\lesssim  r^{2n-2}\Lambda(x,r) $ uniformly in $ x $ and $ r $ for each multiindex $ \alpha $.
		\item The coefficients of the $ \hat{X}_j $ (expressed in $ u $ coordinates), together with their derivatives, are bounded above uniformly in $ x $ and $ r $.
		\item\label{itemf} The vector fields $ \hat{X}_1,\dots ,\hat{X}_{2n-2}$ are of finite type, and $ |\det(\hat{X}_1,\dots ,\hat{X}_{2n-2},Z)|\geq c_3 $ for a commutator $ Z $ (of the $ \hat{X}_j $) of length $ \leq P $ such that 
		$ \hat{X}_1,\dots ,\hat{X}_{2n-2},Z $ span the tangent space in $ B_0 $. Here $ c_3 $ is a positive constant that is independent of $ x $ and $ r $.
	\end{enumerate}
\end{thm}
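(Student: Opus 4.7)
The statement is the classical Nagel--Stein--Wainger (NSW) ball--box theorem in the exponential parameterization $g_{x,r}$ adapted to $M$. My plan is to verify the items in the order (a), (c), (b), (d), (e), (f), each building on the previous. The guiding principle is that $g_{x,r}$ maps the unit box $B_0\subset\R^{2n-1}$ onto a non-isotropic ``box'' of radius $r$ in the directions $X_1,\dots,X_{2n-2}$ and radius $\Lambda(x,r)$ in the direction of $T$; since $\Lambda(x,r)=\sum_{l=2}^{P}\Lambda_l(x)r^l$ weighs exactly the iterated commutators of length $\leq P$, it is the natural scale at which the rescaled fields $\hat X_j=rg^*_{x,r}(X_j)$ become uniformly non-degenerate.

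For (a), the containment $\widetilde B(x,r)\subset B(x,c_2 r)$ is direct: any point $g_{x,r}(u)$ is reachable from $x$ by a piecewise smooth subunit path obtained by composing the tangential flows (subunit time $\lesssim r$) with a standard commutator-flow realization of the $iT$-flow of time $\Lambda(x,r)u_{2n-1}$, using the identity $\exp\bigl(t^{|I|}[X_{i_k},[\cdots,X_{i_1}]\cdots]\bigr)=\prod\exp(\pm tX_{i_j})+O(t^{|I|+1})$ summed over multi-indices $|I|\leq P$ whose commutators contribute the layers $\Lambda_l(x)r^l$ of $\Lambda(x,r)$. The reverse inclusion $B(x,c_1 r)\subset\widetilde B(x,r)$ is the harder direction; here I would invoke the NSW ball--box theorem, whose proof exploits the finite-type hypothesis to produce a privileged basis of commutators near $x$ that spans the tangent space with controlled weights.

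For (c), computing the Jacobian of $g_{x,r}$ at $u=0$ gives columns $rX_1(x),\dots,rX_{2n-2}(x),-\Lambda(x,r)iT(x)$, so since $\{X_1,\dots,X_{2n-2},T\}$ is a basis, $|J_{x,r}(0)|\approx r^{2n-2}\Lambda(x,r)$. To propagate this across $B_0$, I would use the Baker--Campbell--Hausdorff expansion of $g_{x,r}$; each additional term carries a prefactor $r^l\Lambda_l(x)\leq \Lambda(x,r)$ and is therefore uniformly dominated. Then (b) follows from $|\widetilde B(x,r)|=\int_{B_0}|J_{x,r}(u)|\,du$ together with (a), and (d) is the same BCH bookkeeping applied to each $u$-derivative. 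For (e), the identity $\pare{X_jf}\hat{}=r^{-1}\hat X_j\hat f$ exhibits $\hat X_j$ as a first-order differential operator in $u$ whose coefficients, expanded via BCH, are polynomial in $r$ and in the Taylor coefficients of the $X_j$ at $x$, hence uniformly bounded along with all their derivatives.

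Part (f) is the heart of the matter and the anticipated main obstacle. A commutator $[X_{i_k},[\cdots,X_{i_1}]\cdots]$ of length $|I|$ has $iT$-coefficient $\lambda_I$, so its rescaled version picks up an $iT$-coefficient proportional to $r^{|I|}\lambda_I/\Lambda(x,r)$. By the very definition of $\Lambda(x,r)$, some multi-index $|I|\leq P$ achieves a ratio bounded below by an absolute constant, but selecting a single multi-index uniformly in $(x,r)$ requires partitioning a neighborhood of each point of $U$ into a finite family of open sets on which one multi-index dominates. The legitimacy of this partition rests on the continuity of the $\lambda_I$, the finiteness of the candidate set, and the non-vanishing of $\Lambda_P(x)$ throughout $U$. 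Once such a $Z$ is fixed on a neighborhood, the determinant bound $|\det(\hat X_1,\dots,\hat X_{2n-2},Z)|\geq c_3$ follows by evaluating at $u=0$ (where it reduces to the above ratio multiplied by a non-vanishing base determinant) and controlling the variation across $B_0$ via (e).
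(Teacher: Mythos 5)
The paper does not prove this theorem at all: it is quoted from \cite[Theorem 2.4]{koenig02} (which in turn rests on the Nagel--Stein--Wainger theory \cite{NagelSteinWainger} and, for $n=2$, on Christ's work), so there is no in-paper argument to compare yours against. Your outline is consistent with that standard chain of results, and you correctly isolate the NSW ball--box theorem as the essential input for the inclusion $B(x,c_1r)\subset\widetilde{B}(x,r)$; in that sense your proposal takes the same route the paper does, namely deferring the hard core to the cited literature.

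Where your sketch would not stand on its own is precisely where the cited theorem does its real work. The comparability of $J_{x,r}(u)$ with its value at $u=0$ uniformly over the \emph{whole} unit ball (items (b)--(d)), and likewise the determinant lower bound in (f) at points other than $u=0$, are not routine Baker--Campbell--Hausdorff bookkeeping: the issue is to rule out degeneration of the Jacobian or of $\det(\hat X_1,\dots,\hat X_{2n-2},Z)$ somewhere inside $B_0$, and this is the content of the main technical theorem of \cite{NagelSteinWainger}, which uses the finite-type structure, a careful choice of parameterizing commutators, and control of how the coefficients $\lambda_I$ vary over the ball (their $X_j$-derivatives involve commutators of length $|I|+1$, hence are bounded in terms of $\Lambda_{|I|+1}$, so that $r^{|I|}\lambda_I$ oscillates by at most a controlled multiple of $\Lambda(x,r)$ on $\widetilde{B}(x,r)$); your sketch does not address this variation at all. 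Also, in (f) the commutator $Z$ is allowed to depend on $(x,r)$, so the partition-of-$U$ device you propose is unnecessary: for each $(x,r)$ one simply picks a multi-index $I$ with $r^{|I|}\vab{\lambda_I(x)}\gtrsim \Lambda(x,r)$ (possible because the number of multi-indices of length $\leq P$ is finite) and then controls its variation as above. Since the paper itself simply cites \cite{koenig02}, deferring these points to the literature, as you mostly do, is acceptable; asserting them via BCH alone would not be.
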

	A restatement of \eqref{itemf} allows us to write $ \frac{\p}{\p u_i}= \sum_{j=1}^{2n-2}b_{ij}\hat{X}_j+b_{i,2n-1}Z $, $i=\ovl{1,n-1}$, so that the coefficients $ b_{ij} $, together with their derivatives, are bounded above uniformly in $ x $ and $ r $.\\

We will obtain the estimates for the complex Green and Szeg\"o projection kernels and more in terms of $ \rho(x,y) $  {by rescaling appropriate} uniform estimates from $ B_0 $.  For this, we will need estimates analogous to \eqref{eq:2.3} in $ B_0 $.
In $ B_0 $ we define $ \hat{L}_j:= \hat{X}_j+i\hat{X}_{j+n-1}  $, $ \hat{\bar{L}}_j:= \hat{X}_j-i\hat{X}_{j+n-1} $,
$ \dbarb^B $ the tangential Cauchy Riemann operator associated to the CR structure given by $ \{ \hat{L}_1,\cdots,\hat{L}_{n-1} \} $, and $ (\dbarb^B)^* $ the adjoint of $ \dbarb^B $.
Let us denote $ \hat{\bar{\omega}}_1,\cdots,\hat{\bar{\omega}}_{n-1} $ the  $ (0,1) $-forms dual to $ \hat{\bar{L}}_1,\cdots,\hat{\bar{L}}_{n-1} $, respectively. 
For each $ (0,q) $-form $ \phi=\sum_{J\in \I_q}\phi_J\bar{\omega}^J $ in $ U $ we set $ \hat{\phi}=\sum_{J\in \I_q}\hat{\phi}_J\hat{\bar{\omega}}^J $ and $ \norm{\hat{\phi}}^2=\sum_{J\in \I_q}\norm{\hat{\phi}_J}^2 $.
Also, we have for every function $ \phi $ in $ U $ 
\[
\qquad \qquad  \qquad \pare{\bar{L}_j\phi}{\hat{}} =r^{-1}{ \hat{\bar{L}}_j }\hat{\phi}, \qquad \qquad \text{for all } 1\leq j\leq n-1.
\]
If $ a_j\in C^\infty(U) $ is such that $ \bar{L}^*_j=-L_j+a_j $,
we define $ \pare{\bar{L}^*_j}\hat{}:=-\hat{L}_j+r\hat{a}_j $. So we have
\[
\pare{\bar{L}^*_j \phi}\hat{} = r^{-1} \pare{\bar{L}^*_j}\hat{} \ \hat{\phi},
\text{ and }
\pare{\hat{\bar L}_j}^* = \pare{\bar{L}^*_j}\hat{}-J_{x,r}\pare{\hat{L}_j(\frac{1}{J_{x,r}\circ g^{-1}_{x,r
	}})\hat{}\ } = \pare{\bar{L}^*_j}\hat{}-J_{x,r}{\hat{L}_j(\frac{1}{J_{x,r}}) },
\]
where $ J_{x,r}\hat{L}_j(\frac{1}{J_{x,r}}) $ and all its derivatives are uniformly bounded by above in $ x $ and $ r $.

We define the operators $ \hat{\dbar}_b $ and $ \hat{\p}_b $ acting on $ (0,q) $-forms in $ B_0 $ to be the scaled pullback of $ \dbarb $ and $ \dbarbs $ given by
\begin{equation*}
(\dbarb \phi)\hat{} = r^{-1}\hat{\dbar}_b \hat{\phi}\ ,\qquad \qquad  (\dbarbs \phi)\hat{} = r^{-1}\hat{\partial}_b\hat{\phi}. 
\end{equation*}
Thus, from {the} definition of $ \dbarb $ and $ \dbarbs $ 
we will have $\hat{\dbar}_b=\dbarb^B $ and $ \hat{\partial}_b=(\dbarb^B)^*+ P^0 $
for some operator $ P^0 $ of order zero, uniformly in $ x $ and $ r $.

\subsection{Pulling back estimates on $ B_0 $} In this subsection, we pull back the estimates for $ \dbarb $ and $ \dbarbs $ in $ U $ to those in $ B_0 $ regarding $ \hat{\dbar}_b $ and $ \hat{\p}_b $.
The following result is one of the main estimates we use to scale $ L^2 $ estimates. It is established in \cite[Theorem 2.7]{koenig02}, and there is a version when $ n=2 $ in \cite[Lemma 11.4]{christ88}.

\begin{prop}\label{prop:2.7}
	Assume that $ U^\prime\subset M $ is of finite type. If $ B(x,r)\subset U\subset\!\subset U^\prime $, then
	\[
	\norm{u}_{L^2_q(B(x,r))}\leq Cr\pare{ \sum_{K\in \I_q}\sum_{j=1}^{2n-2}\norm{X_j u_K}_{L^2(U^\prime)} + \norm{u}_{L^2_q(U^\prime)} }
	\]
	for every smooth $ (0,q) $-form $ u=\sum_{K\in \I_q}u_K \bar{\omega}_K $ with $ 0\leq q\leq n-1 $. (Here $ C $ is a positive constant just depending on $ U $ and $ U^\prime $).
\end{prop}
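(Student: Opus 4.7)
The plan is to reduce Proposition \ref{prop:2.7} to a uniform subelliptic estimate on the unit ball $B_0\subset\R^{2n-1}$ via the non-isotropic scaling map $g_{x,r}$ from Section \ref{sectionhomogeneoustype}. By Theorem \ref{teo:2.4}(a) I may replace $B(x,r)$ on the left with the non-isotropic ball $\tilde B:=\tilde B(x,r')=g_{x,r'}(B_0)$ for some $r'\sim r$; after relabeling, call this $r$. Fix an enlargement $B_0^\ast=B(0,2)$ of $B_0$: because $U\subset\subset U'$ and $g_{x,r}(B_0^\ast)\subset B(x,Cr)$ (Theorem \ref{teo:2.4}(a)), we have $\tilde B^\ast:=g_{x,r}(B_0^\ast)\subset U'$ for all sufficiently small $r$. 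Pulling $u$ back through $g_{x,r}$, the Jacobian estimate $|J_{x,r}|\approx V(r):=r^{2n-2}\Lambda(x,r)$ (Theorem \ref{teo:2.4}(b),(c)) together with the scaling formula \eqref{eq:formscaling} gives the equivalences
\[
\norm{u}^2_{L^2_q(\tilde B)}\approx V(r)\,\norm{\hat u}^2_{L^2_q(B_0)},\qquad \norm{X_ju_K}^2_{L^2(\tilde B^\ast)}\approx r^{-2}V(r)\,\norm{\hat X_j\hat u_K}^2_{L^2(B_0^\ast)},
\]
with implicit constants independent of $x$ and $r$.

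The core step is a uniform-in-$(x,r)$ subelliptic inequality on $B_0^\ast$:
\[
\norm{\hat v}_{L^2_q(B_0)}\le C\Bigl(\sum_{K,j}\norm{\hat X_j\hat v_K}_{L^2(B_0^\ast)}+\norm{\hat v}_{L^2_q(B_0^\ast)}\Bigr).
\]
By Theorem \ref{teo:2.4}(e),(f) the rescaled fields $\hat X_j$ have coefficients (and all derivatives) bounded uniformly in $x,r$ and, together with a commutator of length $\le P$, span the tangent space of $B_0^\ast$, so they are uniformly of finite type of order $\le P$. H\"ormander's estimate \eqref{thm:horm} then applies componentwise to $\hat v$ with constants depending only on these uniform seminorms, which is exactly what Theorem \ref{teo:2.4}(e),(f) is designed to make available. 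Substituting the scaling equivalences above produces
\[
\norm{u}_{L^2_q(\tilde B)}\le C\Bigl(r\sum_{K,j}\norm{X_ju_K}_{L^2(U')}+\norm{u}_{L^2_q(U')}\Bigr),
\]
which is the desired inequality except that the second summand lacks the prefactor $r$.

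To recover this missing $r$, I split $u=(u-\bar u)+\bar u$ componentwise, where $\bar u$ denotes the $U'$-average of each component of $u$. Since every commutator in the definition of $\Lambda_l$ has length $\ge 2$, $\Lambda(x,r)\lesssim r^2$ for small $r$, hence $|\tilde B|\lesssim r^{2n}$ and
\[
\norm{\bar u}_{L^2_q(\tilde B)}\le|\tilde B|^{1/2}|\bar u|\lesssim r^n\,|U'|^{-1/2}\norm{u}_{L^2_q(U')}\lesssim r\,\norm{u}_{L^2_q(U')}.
\]
For the mean-zero piece $u-\bar u$, the standard sub-Riemannian Poincar\'e inequality on $U'$ (valid under the finite-type hypothesis) gives $\norm{u-\bar u}_{L^2_q(U')}\lesssim\sum_j\norm{X_ju}_{L^2(U')}$; feeding this into the scaled inequality above applied to $u-\bar u$ completes the proof. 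I expect the main obstacle to be the uniformity in the central step: one must track carefully how H\"ormander's constants on $B_0$ depend only on the uniform seminorms of the coefficients and the finite-type order, so that no hidden dependence on $x$ or $r$ leaks through when $r\to 0$.
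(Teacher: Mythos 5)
There is a genuine gap in the final step, and it is exactly at the point that carries all of the content of the proposition. Your ``core'' scaled inequality is trivially true (since $B_0\subset B_0^\ast$, the left-hand side is already dominated by the last term on the right, so H\"ormander's estimate and Theorem \ref{teo:2.4}(e),(f) are not actually doing any work), and after undoing the scaling it only yields $\norm{u}_{L^2_q(\tilde B)}\lesssim r\sum_{K,j}\norm{X_ju_K}_{L^2(U')}+\norm{u}_{L^2_q(U')}$. Your repair then applies a Poincar\'e inequality on the \emph{fixed} set $U'$, whose constant is of unit scale and carries no factor of $r$. Feeding $\norm{u-\bar u}_{L^2_q(U')}\lesssim\sum_j\norm{X_ju}_{L^2(U')}$ into that scaled inequality applied to $u-\bar u$ gives
\[
\norm{u-\bar u}_{L^2_q(\tilde B)}\lesssim r\sum_{j}\norm{X_ju}_{L^2(U')}+\sum_{j}\norm{X_ju}_{L^2(U')},
\]
so the derivative term reappears \emph{without} the prefactor $r$. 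Combined with your (correct) bound $\norm{\bar u}_{L^2_q(\tilde B)}\lesssim r\norm{u}_{L^2_q(U')}$, the argument only proves $\norm{u}_{L^2_q(B(x,r))}\lesssim\sum_j\norm{X_ju}_{L^2(U')}+r\norm{u}_{L^2_q(U')}$, which is strictly weaker than the statement and useless for its applications in the paper (e.g.\ in Proposition \ref{prop:2.5} and Proposition \ref{prop:5.3} the factor $r$ multiplying the derivative norms is precisely what is needed to cancel the $r^{-1}$ coming from $(\dbarb\phi)\hat{}\,=r^{-1}\hat\dbar_b\hat\phi$). Iterating your step does not help, because the Poincar\'e constant on $U'$ stays of order one. (Two smaller issues: a Poincar\'e inequality with a single mean on an arbitrary open set $U'$ is not automatic --- it holds on Carnot--Carath\'eodory balls or John domains, so you should average over a fixed ball $B(x,R_0)\subset U'$; and Theorem \ref{teo:2.4} gives the Jacobian and coefficient bounds on $B_0$, not on $B(0,2)$.)

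The mechanism that produces the missing $r$ must work scale by scale rather than jumping directly to unit scale: one uses an averaging/Poincar\'e estimate only at scale comparable to the ball, e.g.\ $\norm{u-u_{B(x,A\rho)}}_{L^2(B(x,A\rho))}\lesssim \rho\sum_j\norm{X_ju}_{L^2(B(x,A'\rho))}$, and then climbs dyadically from $\rho=r$ up to a fixed scale $R_0$, telescoping the ball averages and summing geometrically thanks to the volume growth $|B(x,A\rho)|\gtrsim A^{2n-2}|B(x,\rho)|$ from Theorem \ref{teo:2.4}(b); the small prefactor $|B(x,\rho)|^{1/2}/|B(x,A\rho)|^{1/2}$ at each step is what converts the unit-scale information on $U'$ into a bound of size $r$. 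Note also that the paper itself does not reprove this statement: it invokes \cite[Theorem 2.7]{koenig02} (and \cite[Lemma 11.4]{christ88} for $n=2$), whose proofs follow this multi-scale scheme. As written, your proof establishes only the weaker estimate above, so the key gain on the derivative term is missing.
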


\begin{remark}\label{rem:pullback1}
	From Theorem \ref{teo:2.4}, if $ f  $ is defined on $ \tilde{B}(x,r) $, we have
		\[
		\norm{f}_{L^2(\tilde{B}(x,r))}^2
		\lesssim \abs{B(x,r)} \norm{\hat{f}}_{L^2(B_0)}^2 
		\quad \text{ and } \quad 
		\norm{\hat{f}}_{L^2(B_0)}^2
		\lesssim \abs{B(x,r)}^{-1} \norm{f}_{L^2(\tilde{B}(x,r))}^2 ,
		\]
where the constants in the inequalities are independent of $ x $ and $ r $.
\end{remark}

\begin{prop}\label{prop:2.5}
	Fix $ \zeta,\zeta^\prime\in C^\infty_0(B_0) $ with $ \zeta\prec \zeta^\prime $. For smooth $ (0,q) $-forms $ v=\sum_{K\in \I_q}v_K\hat{\bar\omega}^K $ on $ B_0 $ and $ s\geq 0 $
	\begin{equation}\label{eq:251}
	\norm{\zeta v}_{H^{s+\ve}_{q}(B_0)}^2\leq C_s (\norm{\zeta^\prime \hat{\dbar}_bv }_{H^{s}_{q+1}(B_0)}^2 + \norm{\zeta^\prime \hat{\p}_bv }_{H^{s}_{q-1}(B_0)}^2 + \norm{v}_{{L^2_q(B_0)}}^2 ).
	\end{equation}
\end{prop}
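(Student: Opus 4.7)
The strategy is to deduce \eqref{eq:251} from Corollary \ref{cor:2.2} applied to the pulled-back form $\check v = v\circ g_{x,r}^{-1}$ on $\tilde B(x,r)\subset U$, tracking every constant so that nothing depends on $(x,r)$; the uniform geometric bounds of Theorem \ref{teo:2.4} are what make this possible.

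First I would reduce to the base case $s=0$ by the same induction on $s = n\ve$ and interpolation used in Corollary \ref{cor:2.2}. The induction step commutes the isotropic $\Lambda^s$ on $B_0$ past $\hat\dbar_b$ and $\hat\p_b$; Theorem \ref{teo:2.4}(e) ensures that the coefficients of these operators are smooth with derivatives uniformly bounded in $(x,r)$, so the commutator estimates and small-constant/large-constant absorptions reproduce the argument of Corollary \ref{cor:2.2} with uniform constants.

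For the base case $s=0$, Theorem \ref{teo:2.4}(f) gives uniform finite type for the fields $\hat X_j$ on $B_0$, hence \eqref{thm:horm} yields, uniformly in $(x,r)$,
\[
\norm{\zeta v}_{H^{\ve}_q(B_0)}^2 \lesssim \sum_{K\in\I_q}\sum_{j=1}^{2n-2}\norm{\hat X_j(\zeta v_K)}_{L^2(B_0)}^2 + \norm{\zeta v}_{L^2(B_0)}^2.
\]
Scaling back via \eqref{eq:formscaling} and Remark \ref{rem:pullback1} gives
\[
\norm{\hat X_j(\zeta v_K)}_{L^2(B_0)}^2 \approx r^2 |B(x,r)|^{-1}\norm{X_j(\check\zeta \check v_K)}_{L^2(\tilde B(x,r))}^2,
\]
with $\check\zeta = \zeta\circ g_{x,r}^{-1}$. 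Proposition \ref{prop:control1} applied to $\check\zeta\check v$ on $U$ bounds $\sum_{K,j}\norm{X_j(\check\zeta\check v_K)}^2$ by $\norm{\dbarb(\check\zeta\check v)}^2 + \norm{\dbarbs(\check\zeta\check v)}^2 + \norm{\check\zeta\check v}^2$, and the scaling identities $(\dbarb\check v)\hat{}=r^{-1}\hat\dbar_b v$, $(\dbarbs\check v)\hat{}=r^{-1}\hat\p_b v$ combined once more with Remark \ref{rem:pullback1} make all $r$- and $|B(x,r)|$-factors cancel, producing \eqref{eq:251} at $s=0$.

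The main obstacle is verifying that every constant is uniform in $(x,r)$. Beyond the points above, one needs the constants in Proposition \ref{prop:control1} applied to the family $\tilde B(x,r)\subset U$ to be uniform. This follows because the matrices $\Upsilon,\tilde\Upsilon$ from Definition \ref{def:comp}, the Levi entries $c_{jm}$, and the coefficients $d_{jm}^l$ of $[L_j,\bar L_m]$ are smooth on $U$, hence uniformly bounded with uniformly bounded derivatives on relatively compact subsets; the commutator errors in \eqref{ec:mainid0} and \eqref{eq:mainid6} and the microlocal decomposition use fixed objects $P^+, P^-, P^0$ on $U$, so every absorption step proceeds with $(x,r)$-uniform constants.
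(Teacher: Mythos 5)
Your argument is correct and is essentially the paper's own proof: for $s=0$ one applies the uniform Hörmander estimate on $B_0$ (via Theorem \ref{teo:2.4}(e),(f)), scales $\hat X_j(\zeta v)$ back to $\tilde B(x,r)$ with \eqref{eq:formscaling} and Remark \ref{rem:pullback1}, invokes the fixed estimate \eqref{ineq:compweakyq} on $U$ (whose constant is automatically $(x,r)$-independent since $(\zeta v)\check{}\,$ is supported in $U$), and scales back; the case $s>0$ then follows by the same commutator/induction/interpolation argument as Corollary \ref{cor:2.2}. No gaps beyond routine details (e.g.\ replacing $\hat\dbar_b(\zeta v)$ by $\zeta'\hat\dbar_b v$ via a zeroth-order commutator) that the paper also leaves implicit.
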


\begin{proof}
	By \eqref{ineq:compweakyq} and Remark \ref{rem:pullback1}
	\begin{align*}
	\norm{\hat{X}_j(\zeta v)}^2_{L^2_q(B_0)}&
	\lesssim r^2\abs{B(x,r)}^{-1}\pare{\norm{ \dbarb(\zeta v)\check{}\  }^2_{\tilde{B}(x,r)} + \norm{ \dbarbs(\zeta v)\check{}\  }^2_{\tilde{B}(x,r)} + \norm{ (\zeta v)\check{}\  }^2_{\tilde{B}(x,r)}},\\
	&\lesssim 
	r^2\pare{\norm{ r^{-1}\hat{\dbar}_b(\zeta v) }^2_{L^2_{q+1}(B_0)} + \norm{ r^{-1}\hat{\p}_b(\zeta v)}^2_{L^2_{q-1}(B_0)} + \norm{ \zeta v  }^2_{L^2_q(B_0)}}.
	\end{align*}
	Then
	\begin{align*}
	\sum_{j=1}^{2n-2}\norm{\hat{X}_j(\zeta v)}^2_{L^2_q(B_0)}\leq C\pare{\norm{ \hat{\dbar}_b(\zeta v) }^2_{L^2_{q+1}(B_0)} + \norm{ \hat{\p}_b(\zeta v)}^2_{L^2_{q-1}(B_0)} + r^2\norm{ \zeta v  }^2_{L^2_q(B_0)}}.
	\end{align*}
	On the other hand, by $ (e) $ and $ (f) $ in Theorem \ref{teo:2.4} and \eqref{thm:horm}, we obtain
	\[
	\norm{\zeta v}_{H^\ve_{q}(B_0)}^2\lesssim \sum_{j=1}^{2n-2}\norm{\hat{X}_j(\zeta v)}^2_{L^2_q(B_0)} + \norm{\zeta v}^2_{L^2_q(B_0)},
	\]
	uniformly in $ x $ and $ r $. This proves \eqref{eq:251} for $ s=0 $. We obtain \eqref{eq:251} for $ s>0 $ using a similar argument to prove the Corollary \ref{cor:2.2}.
\end{proof}

	\begin{cor}
	\label{prop:2.6}
	Fix $ \zeta,\zeta'\in C^\infty_0(B_0) $ with $ \zeta\prec \zeta' $. In the following $ s\geq 0 $.
	\begin{enumerate}[a)]
	\item Let $ v $ and $ g $ be $ (0,q-1) $-forms on $ B_0 $, and suppose that $ \hat{\p}_b\hat{\dbar}_b v=g $. Then
	\begin{equation}\label{eq:k2.8}
	\norm{\zeta\hat{\dbar}_bv}_{H^{s+\ve}_{q}(B_0)}\leq C_s \pare{ \norm{\zeta' g}_{H^s_{q-1}(B_0)} +\norm{\hat{\dbar}_bv}_{L^2_q(B_0)}}.
	\end{equation}
	
	\item Let $ v $ and $ g $ be $ (0,q) $-forms on $ B_0 $, and suppose that $ \hat{\p}_b\hat{\dbar}_b v=g $ with $ \hat{\p}_b v=0 $. Then
	\begin{equation}\label{eq:k2.9}
		\norm{\zeta\hat{\dbar}_bv}_{H^{s+\ve/2}_{q+1}(B_0)}\leq C_s \pare{ \norm{\zeta' g}_{H^s_{q}(B_0)} +\norm{\hat{\dbar}_bv}_{L^2_{q+1}(B_0)}+\norm{v}_{L^2_q(B_0)}}.
	\end{equation}
	
	\item Let $ v $ and $ g $ be $ (0,q+1) $-forms on $ B_0 $, and suppose that $ \hat{\p}_b\hat{\dbar}_b v=g $. Then
	\begin{equation}\label{eq:k2.10}
		\norm{\zeta\hat{\p}_bv}_{H^{s+\ve}_{q}(B_0)}\leq C_s \pare{ \norm{\zeta' g}_{H^s_{q-1}(B_0)} +\norm{\hat{\p}_bv}_{L^2_q(B_0)}}.
	\end{equation}
	
	\item Let $ v $ and $ g $ be $ (0,q) $-forms on $ B_0 $, and suppose that $ \hat{\dbar}_b\hat{\p}_b v=g $ with $ \hat{\dbar}_b v=0 $. Then
	\begin{equation}\label{eq:k2.11}
		\norm{\zeta\hat{\p}_bv}_{H^{s+\ve/2}_{q-1}(B_0)}\leq C_s \pare{ \norm{\zeta' g}_{H^s_{q}(B_0)} +\norm{\hat{\p}_bv}_{L^2_{q-1}(B_0)}+\norm{v}_{L^2_q(B_0)}}.
	\end{equation}
	
	\item Let $ v $ and $ g $ be $ (0,q) $-forms on $ B_0 $, and suppose that $ \hat{\dbar}_b\hat{\p}_b v=g $ with $ \hat{\dbar}_b v=0 $. Then
	\begin{equation}\label{eq:k2.12}
		\norm{\zeta v}_{H^{s+3\ve/2}_{q}(B_0)}\leq C_s \pare{ \norm{\zeta' g}_{H^s_{q}(B_0)} +\norm{\hat{\p}_bv}_{L^2_{q-1}(B_0)}+\norm{v}_{L^2_q(B_0)}}.
	\end{equation}

	\item Let $ v $ and $ g $ be $ (0,q) $-forms on $ B_0 $, and suppose that $ \hat{\p}_b\hat{\dbar}_b v=g $ with $ \hat{\p}_b v=0 $. Then
	\begin{equation}\label{eq:k2.13}
		\norm{\zeta v}_{H^{s+3\ve/2}_{q}(B_0)}\leq C_s \pare{ \norm{\zeta' g}_{H^s_{q}(B_0)} +\norm{\hat{\dbar}_bv}_{L^2_{q+1}(B_0)}+\norm{v}_{L^2_q(B_0)}}.
	\end{equation}
	\end{enumerate}
	\end{cor}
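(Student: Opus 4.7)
All six inequalities follow a common template: combine the base subelliptic estimate on $B_0$ (Proposition \ref{prop:2.5}) with an orthogonality refinement that mirrors Corollary \ref{cor:2.3}. The preliminary step is to establish, on $B_0$ and uniformly in $x$ and $r$, the $B_0$-analog of Corollary \ref{cor:2.3}: for smooth forms $\phi$ with $\phi \perp \ker \hat{\dbar}_b$ (resp.\ $\phi \perp \ker \hat{\p}_b$), one gains $\ve/2$ derivatives via $\norm{\zeta \phi}_{H^{s+\ve/2}(B_0)} \lesssim \norm{\zeta' \hat{\dbar}_b \phi}_{H^s(B_0)} + \norm{\phi}_{L^2(B_0)}$ and its $\hat{\p}_b$-counterpart. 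These are obtained by repeating the argument of Corollary \ref{cor:2.3} with Proposition \ref{prop:2.5} in place of Theorem \ref{thm:subell}; uniformity of the constants follows from Theorem \ref{teo:2.4}(e)-(f).

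Parts (a) and (c) are direct applications of Proposition \ref{prop:2.5} to $u := \hat{\dbar}_b v$ and $u := \hat{\p}_b v$, respectively. Since $\hat{\dbar}_b$ and $\hat{\p}_b$ are scaled pullbacks of $\dbarb$ and $\dbarbs$, we have $(\hat{\dbar}_b)^2 = (\hat{\p}_b)^2 = 0$; consequently one of the two derivative terms on the right of \eqref{eq:251} drops out and the other equals $g$. Parts (b) and (d) invoke the $B_0$-analog of Corollary \ref{cor:2.3} just established: the hypothesis $\hat{\p}_b v = 0$ (resp.\ $\hat{\dbar}_b v = 0$) forces $\hat{\dbar}_b v \perp \ker \hat{\p}_b$ (resp.\ $\hat{\p}_b v \perp \ker \hat{\dbar}_b$), modulo lower-order corrections coming from the discrepancy $\hat{\p}_b - (\dbarb^B)^* = P^0$, which are absorbed into the harmless $\norm{v}_{L^2_q}$ remainder. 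The single remaining derivative term is then $\hat{\p}_b(\hat{\dbar}_b v) = g$ in (b) and $\hat{\dbar}_b(\hat{\p}_b v) = g$ in (d), yielding the $\ve/2$ gain.

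Parts (e) and (f) are obtained by bootstrapping: apply Proposition \ref{prop:2.5} to $v$ itself at Sobolev level $s + \ve/2$ to obtain
\begin{equation*}
\norm{\zeta v}_{H^{s+3\ve/2}_q(B_0)} \lesssim \norm{\zeta' \hat{\dbar}_b v}_{H^{s+\ve/2}_{q+1}(B_0)} + \norm{\zeta' \hat{\p}_b v}_{H^{s+\ve/2}_{q-1}(B_0)} + \norm{v}_{L^2_q(B_0)},
\end{equation*}
then kill one term on the right by the hypothesis ($\hat{\dbar}_b v = 0$ for (e), $\hat{\p}_b v = 0$ for (f)) and estimate the other at Sobolev order $s + \ve/2$ by applying, respectively, (d) or (b) already proved. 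A cascade of cutoff functions $\zeta \prec \zeta' \prec \zeta''$ handles the nesting. The main technical obstacle is the uniform tracking of error terms: the approximate-adjointness $\hat{\p}_b = (\dbarb^B)^* + P^0$ spoils the exact orthogonality needed in (b) and (d), and commutators of $\hat{\dbar}_b$, $\hat{\p}_b$ with $\Lambda^s$ contribute lower-order contributions. All of these must be absorbed via a small-constant/large-constant argument with constants uniform in $x$ and $r$, which is precisely what Theorem \ref{teo:2.4} guarantees.
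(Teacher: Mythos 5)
Your handling of parts (a), (c), (e) and (f) matches the paper's proof: (a) and (c) are immediate from Proposition \ref{prop:2.5} applied to $\hat{\dbar}_b v$, resp.\ $\hat{\p}_b v$, with one derivative term killed by $\hat{\dbar}_b^2=\hat{\p}_b^2=0$, and (e), (f) are exactly the bootstrap the paper writes out: Proposition \ref{prop:2.5} applied to $v$ at level $s+\ve/2$, the vanishing hypothesis removing one term, and then \eqref{eq:k2.11}, resp.\ \eqref{eq:k2.9}, finishing with a cascade of cutoffs $\zeta\prec\zeta''\prec\zeta'$.

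The gap is in your route to (b) and (d). You want to first prove a $B_0$-analogue of Corollary \ref{cor:2.3} with the hypothesis ``$\phi\perp\ker\hat{\dbar}_b$'' (resp.\ ``$\phi\perp\ker\hat{\p}_b$''), and then claim that $\hat{\p}_b v=0$ forces $\hat{\dbar}_b v\perp\ker\hat{\p}_b$ modulo corrections from $P^0$ absorbable into $\norm{v}_{L^2_q(B_0)}$. This misidentifies the role of the hypothesis and does not transfer to $B_0$. On a compact manifold the relation $\Ran(\dbarb)\perp\ker(\dbarbs)$ holds for \emph{every} $v$, so if orthogonality were really the mechanism, the hypothesis $\hat{\p}_b v=0$ in (b) would be superfluous; its actual role (visible in the extra term $\norm{v}_{L^2_q(B_0)}$ on the right, absent in (a) and (c)) is to let you apply Proposition \ref{prop:2.5} to $v$ itself --- the only form level at which the scaled subelliptic estimate is available --- and to kill terms produced when $\hat{\dbar}_b$ is moved across the inner product in the Corollary \ref{cor:2.3}-type argument. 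Moreover, on the ball $B_0$ the operators carry no boundary conditions: pairing $\hat{\dbar}_b v$ against a form $w\in\ker\hat{\p}_b$ that is not compactly supported produces boundary terms in addition to the $P^0$ discrepancy, and these are neither lower order nor controlled by $\norm{v}_{L^2_q(B_0)}$; and ``$\ker\hat{\p}_b$ on $B_0$'' is not a usable object, since the proof of Corollary \ref{cor:2.3} on $M$ relies precisely on the global $L^2$ structure of the closed manifold (exact adjointness without boundary terms, closed range at level $q$), which is not available on $B_0$. The correct adaptation --- what the paper means by ``using ideas from the proof of Corollary \ref{cor:2.3}'' --- is to rerun that integration-by-parts/interpolation argument in the scaled setting with all quantities cut off by functions compactly supported in $B_0$, using the equations $\hat{\p}_b\hat{\dbar}_b v=g$ and $\hat{\p}_b v=0$ (resp.\ $\hat{\dbar}_b\hat{\p}_b v=g$ and $\hat{\dbar}_b v=0$) directly, together with Proposition \ref{prop:2.5} and the uniformity supplied by Theorem \ref{teo:2.4}; no orthogonality statement on $B_0$ is formulated or needed. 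As written, your preliminary lemma is unproved and the verification of its hypothesis is invalid, so (b) and (d) --- and consequently (e) and (f), which rest on them --- are not established by your argument.
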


	\begin{proof}
		Estimates \eqref{eq:k2.8} and \eqref{eq:k2.10} follow from Proposition \ref{prop:2.5}. \eqref{eq:k2.9} and \eqref{eq:k2.11} follow similarly {using ideas from the} to the proof of Corollary \ref{cor:2.3} . For \eqref{eq:k2.12} we use Proposition \ref{prop:2.5} and \eqref{eq:k2.11} as follows
		\begin{align*}
		\norm{\zeta v}_{H^{s+3\ve/2}_{q}(B_0)}&\lesssim \pare{ 
			\norm{\zeta'' \hat{\p}_b v}_{H^{s+\ve/2}_{q-1}(B_0)} + \norm{v}_{L^2_q(B_0)} }\\
		&\lesssim \pare{ 
			\norm{\zeta' g}_{H^{s}_{q}(B_0)}+ \norm{\hat{\p}_b v}_{L^2_{q-1}(B_0)} + \norm{v}_{L^2_q(B_0)} }.
		\end{align*}
		for smooth functions $ \zeta\prec\zeta''\prec\zeta' $ in $ C^\infty_0(B_0) $. \eqref{eq:k2.13} is proved in a similar way.
	\end{proof}
 
We {turn now to studying} the regularity properties of Szeg\"o Projection $ S_q $ and the Complex Green operator $ K_q $ throughout the following operators they define. 
For a $ (0,q) $-form $ f=\sum_{J\in \I_q} f_J \bar{\omega}^J $ and $ I\in \I_q $, let $ \la{K_qf, \bar{\omega}^I}\ra $ denote 
the function satisfying
$
K_q f=\sum_{I\in \I_q}\la{ K_q f,\bar{\omega}^I }\ra \bar{\omega}^I.
$
For every $ I,J\in \I_q $ we define the operators $ K_{q}^{IJ} $ by
\[
K_q^{IJ}(g):= \la{K_q(g \bar{\omega}^J),\bar{\omega}^I}\ra
\]
for any function $ g $ on $ M $. Since $ K_q $ is linear and bounded in $ L^2_q $, the operators $ K_q^{IJ} $ are linear and bounded in $ L^2(M) $. 
{More generally}, if $ F:L^2_q(M)\to L^2_q(M)$, we define $ F^{IJ} $ similarly. For example, if $ F=Id $, we have $ F^{IJ}(g)=g\delta_J^I $, where $ \delta^I_J $ is Kronecker's delta.

\subsection{Regularity properties of Szeg\"o Projection}
{The techniques for estimating $ K^{IJ}_q $ and $ S^{IJ}_q $ are similar, and they are the focus of this section.}
Let $ S_q^{IJ}(x,y) $ denote the distribution-kernel of $ S_q^{IJ} $. We use $ \Delta_U =\lla{(x,x): x\in U} $ to denote the diagonal of $ U\times U $.

First, we show that $ S_q^{IJ}(x,y) $ 
is smooth away from the diagonal. For this, it will suffice to show that the distribution-kernel of 
$ (I-S_q)^{IJ} $ is smooth {off of} the diagonal.

Let $ f $ be $ (0,q) $-form with coefficients in $ C^1_0(U_1) $ for some $ U_1\subset U$, and $ U_2\subset U $ be a neighborhood of $ \ovl{U_1} $. Then $ \dbarb(I-S_q)f=\dbarb f=0 $ in $ U_1^\complement $. Let $ \zeta , \zeta^\prime \in C^\infty_0(U\setminus \ovl{U}_2)$ such that $ \zeta \prec \zeta^\prime $. Then by Corollary \ref{cor:2.2}, we have
\begin{align}
\norm{\zeta(I-S_q) f}_{H^{s+\ve}_{q}(U)} &\lesssim C_s \pare{ \norm{\zeta^\prime \dbarb(I-S_q)f}_{H^s_{q}(U)}+ \norm{ \zeta^\prime \dbarbs(I-S_q)f }_{H^s_{q}(U)}+\norm{(I-S_q)f}_{L^2_q} }\nn \\
&\lesssim C_s\norm{f}_{L^2_q}{.} \label{eq:teo4.2}
\end{align}
Let $ V $ and $ V_2 $ be open sets such that $ \ovl{U}_2\subset V_2 \subset\!\subset V\subset\!\subset U $ and $ \zeta \equiv 1 $ in $ V\setminus \ovl{V}_2 $. By \eqref{eq:teo4.2}
\begin{align*}
\norm{(I-S_q)f}_{H^{s+\ve}_{q}(V\setminus \ovl{V}_2)}\leq \norm{\zeta(I-S_q) f}_{H^{s+\ve}_{q}(U\setminus \ovl{U}_2)}\leq \norm{\zeta(I-S_q) f}_{H^{s+\ve}_{q}(U)} \leq C_s\norm{f}_{L^2_q(U_1)},
\end{align*}
for every $ (0,q) $-form $ f $ with coefficients in $ C^1_0(U_1) ${.}
Then, for any function $ g\in C^1_0(U_1) $ and $ I,J\in \I_q $,
we obtain
\begin{align*}
\norm{(I-S_q)^{IJ}g}_{H^{s+\ve}(V\setminus \ovl{V}_2)}&
\leq \norm{ (I-S_q)(g\bar{\omega}^J) }_{H^{s+\ve}_q(V\setminus \ovl{V}_2)}
\lesssim C^\prime_s \norm{g \bar{\omega}^J}_{L^2_q(U_1)}\leq C_s^\prime \norm{ g }_{L^2(U_1)}.
\end{align*}
By {the Sobolev embedding theorem}, $ (I-S_q)^{IJ} : L^2(U_1)\rightarrow C^\infty(V\setminus \ovl{V_2}) $ is bounded for all $ I,J\in \I_q $. In particular
\begin{equation*}
\abs{ \int_{U_1} \frac{\p}{\p x^\alpha}(I-S_q)^{IJ}(x,y)g(y)dy } =
\Norm{\frac{\p}{\p x^\alpha}[(I-S_q)^{IJ}g]}_{L^\infty(V\setminus \ovl{V}_2)}\leq C_\alpha \norm{g}_{L^2(U_1)}.
\end{equation*}
By $ L^2 $ duality, we obtain
\[
\Norm{\frac{\p}{\p x^\alpha}(I-S_q)^{IJ}(x,\cdot )}_{L^2(U_1)}\leq C_{1,\alpha},\qquad\qquad \forall \ x\in V\setminus\ovl{V}_2.
\]
Since $ (I-S_q) $ is self-adjoint, $ (I-S_q)^{IJ}(z,w)=\ovl{(I-S_q)}^{JI}(w,z) $. In a similar way (now taking $ g\in C^1_0(V\setminus \ovl{V}_2) $), we also obtain
\begin{equation*}
\Norm{\frac{\p}{\p y^\beta}(I-S_q)^{IJ}(\cdot,y)}_{L^2(V\setminus \ovl{V}_2)}\leq C_{2,\beta},\qquad\qquad \forall \ y\in U_1.
\end{equation*}
Let $ \phi \in C^\infty_0(V\setminus \ovl{V}_2)$ and $ \psi\in C^\infty_0(U_1) $. From the last two inequalities, for any multi indexes $ \alpha $ and $ \beta $ there exist $ C_\alpha , C_\beta>0 $ such that
\begin{equation*}
\nnorm{ \frac{\p}{\p x^\alpha}\corc{ \phi(x)\psi(y) (I-S_q)^{IJ}(x,y)} }_{L^2(V,dy)}\leq C_\alpha, \qquad\qquad \forall x\in V\setminus \ovl{V}_2,
\end{equation*}
and
\begin{equation*}
\nnorm{ \frac{\p}{\p y^\beta}\corc{ \phi(x)\psi(y) (I-S_q)^{IJ}(x,y)} }_{L^2(V,dx)}\leq C_\beta,\qquad\qquad \forall y\in U_1.
\end{equation*}
A {standard} Fourier inversion argument (see \cite[Lemma 6.2]{RothStein76}) yields
\[
\nnorm{\frac{\p^\alpha}{\p x^\beta}\frac{\p^\alpha}{\p y^\beta}\corc{ \phi(x)\psi(y)(I-S_q)^{IJ}(x,y) } }_{L^2(V\times V,dxdy)}<\infty .
\]
By {the Sobolev embedding theorem}, we have $ \phi(x)\psi(y)(I-S_q)^{IJ}(x,y) \in C^\infty(V\times V)$. In particular, $ (I-S_q)^{IJ}(x,y) $ is smooth for $ (x,y)\in (V\setminus\ovl{V}_2)\times U_1 $, with $ U_1$ and $ (V\setminus \ovl{V}_2) $ disjoint. This proves  that $ (I-S_q)^{IJ}(x,y) $, and hence $ S_q^{IJ} $, is smooth away from diagonal $ \Delta_U $. 

	Similarly, we can prove that the distribution-kernel of $ S_q^{\prime IJ} $ is smooth away from diagonal $ \Delta_U $. 
	Also, by {a similar argument and Corollary} \ref{cor:2.3}, we can show that the distribution-kernels of $ (S_qK_q)^{IJ} $ and $ (S_q'K_q)^{IJ} $ are in $ C^\infty(U\times U \setminus \Delta_U) $ (note that $ S_qK_q =K_qS_q$ and $ S_q^\prime K_q =K_qS_q^\prime $). Since $ K_q=S_qK_q+S_q'K_q $, then $ K_q^{IJ}(x,y) $ is also in $ C^\infty(U\times U \setminus \Delta_U) $. Analogously it can be proved that the distribution-kernel of $ H_q^{IJ} $ is smooth in $ U\times U $.

	We say $ \psi $ is a {bump function} if it is a smooth function supported in $ B(x,r) $ and normalized
	{in the sense that there exists $ N>0 $ so that}
	$ \norm{D^l\phi}_{L^\infty} \leq r^{-l}$ for every $ 0\leq l\leq N $ and $ D^l = X_{j_1}X_{j_2}\cdots X_{j_l}$. Note that if $ \phi=\sum_{J\in \I_q}\phi_J\bar{\omega}^J $ such that $ \phi_J $ is a bump function for every $ J\in \I_q $, then we have
	\[
	\norm{\phi}^2_{L^2_q}\lesssim \abs{B(x,r)},\quad \norm{ \widehat{D}^k_z \hat{\phi}(z) }_{L^\infty_{q}(B_0)} \lesssim 1,\qquad \Norm{\frac{\p^\alpha}{\p z^\alpha}\hat{\phi}(z) }_{L^\infty_{q}(B_0)}\leq C_M,
	\]
	for $ \abs{\alpha}\leq M=M(N) $. For the last inequality, we used \eqref{itemf} of Theorem \ref{teo:2.4}, for which we assume  $ N $ is sufficiently large.

Now, we prove cancellation properties for the operator $ S^{IJ}_q $ and $ S^{\prime IJ}_q $ in terms of its action on bump functions, namely \eqref{eq:ebfS5}. 

Let $ \phi $ be a $ (0,q) $-form with coefficients {that are} bump functions supported in $ B(x,r) $. Let $ V\subset \widetilde{V}\subset B_0 $, $ \zeta,\zeta^\prime\in C^\infty_0(B_0) $ such that $ \zeta\prec\zeta^\prime $ and $ \zeta\equiv 1 $ in $ V $. In the following, we will scale with respect to $ g_{x,R} $ for some $ R>0 $, such that $ B(x,r)\subset\tilde{B}(x,R) $, chosen appropriately. Then by Proposition \ref{prop:2.5}, we have
\begin{align}
	\norm{\zeta ((I-S_q)\phi)\hat{}\  }^2_{H^{s+\ve}_{q}(B_0)} 
	\leq C_s\pare{ 
		\norm{R\zeta^\prime (\dbarb\phi)\hat{}\ }^2_{H^s_{q+1}(B_0)}+ \norm{ \pare{ (I-S_q)\phi }\hat{}\ }^2_{L^2_q(B_0)} }. \label{eq:mcSbf1}
\end{align}
For $ N $ sufficiently large, we obtain
\begin{align*}
	\norm{R\zeta^\prime (\dbarb\phi)\hat{}\ }_{H^s_{q+1}(B_0)}\leq C_{\zeta^\prime} \sum_{\abs{\alpha}\leq s+1} \Norm{ \frac{\p^\alpha \hat{\phi}}{\p z^\alpha} }_{L^2_q} \leq C_s.
\end{align*}
Also, since $ I-S_q $ is bounded in $ L^2_q(M) $ {by the closed range estimates and} Theorem \ref{teo:2.4}, we have
\begin{align*}
	\norm{ \pare{ (I-S_q)\phi }\hat{}\ }^2_{L^2_q(B_0)}&\lesssim \abs{B(x,R)}^{-1}\norm{(I-S_q)\phi}^2_{L^2_q(\widetilde{B}(x,R))} \lesssim \abs{B(x,R)}^{-1}\abs{B(x,r)} \lesssim 1.
\end{align*}
So, by \eqref{eq:mcSbf1}
\begin{align}\label{eq:SqSprimeq}
\norm{((I-S_q)\phi)\hat{}\ }_{H^{s+\ve}_{q}(V)}\leq \norm{ \zeta((I-S_q)\phi)\hat{}\ }_{H^{s+\ve}_{q}(B_0)}\leq C_s.
\end{align}
Using {the Sobolev embedding theorem}, for $ D^k=X_{j_1}X_{j_2}\cdots X_{j_k} $, we have
\[
\abs{ D^k(I-S_q)\phi(z) }\leq \norm{ D^k(I-S_q)\phi }_{L^\infty_{q}(g_{x,R}(V))}= R^{-k} \norm{ \widehat{D^k}((I-S_q)\phi)\hat{}\ }_{L^\infty_{q}(V)}\lesssim R^{-k}
\]
for all $ z $ such that $ z\in g_{x,R}(V) $. {Choosing $ R=c_1^{-1}r $ and $ V $ such that $ B(x,r)\subset g_{x,R}(V) $. Then by Theorem \ref{teo:2.4}, we} have $ B(x,r)\subset \tilde{B}(x,R)$ and also
\[
\abs{D^{k}(I-S_q)\phi(z)}\leq c_k r^{-k},\qquad\qquad z\in B(x,r).
\]

\begin{remark}\label{rem:ebfS6}
	The process above {can also} be done for $ S^\prime_q $. {In particular, the following estimates hold:}
	\begin{equation}\label{eq:ebfS5}
	\abs{D^k(S_q^{IJ}\psi)(z)}\leq C_k r^{-k}\ ,\qquad 	\abs{D^k(S_q^{\prime IJ}\psi)(z)}\leq C_k r^{-k} ,\qquad \forall z\in B(x,r)
	\end{equation}
	for a bump function $ \psi $, provided $ N=N(k) $ is chosen sufficiently large. In fact, following an analogous process above, we can prove \eqref{eq:SqSprimeq} for $ S_q^\prime $ instead of $ S_q $ for $ N $ sufficiently large.
\end{remark}

\subsection{Regularity properties of the Complex green operator}
This section aims to prove cancellation properties to $ K^{IJ}_q $, Proposition \ref{prop:ebfK}, and the next pointwise differential estimates for its distributional kernel. 

\begin{thm}\label{teo:5.1}
	Let $ M $ be a smooth, compact, orientable CR manifold of hypersurface type with closed range for $ \dbarb^{0,q-1} $ and $ \dbarb^{0,q} $. Assume {the comparable  weak $ Y(q) $ and finite type conditions} are satisfied in $ M $. 
	Then for each point in $ M $, there exists a neighborhood $ U $ such that $ K^{IJ}_q(x,y) \in C^\infty(U\times U \setminus \Delta_U) $.
	Moreover, for $ x\neq y $
	\begin{equation}\label{pointwisegreen}
		\abs{D_x^kD_y^lK_q^{IJ}(x,y)}\leq C_{k,l}\  \rho(x,y)^{2-k-l}\abs{B(x,\rho(x,y))}^{-1}
	\end{equation} 
	where $ D_x^k $ represents any composition $ X_{i_1}X_{i_2}\cdots X_{i_k} $ acting in the first variable $ x $ with $ (i_1,\cdots,\break i_k) \in \lla{ 1,2,\cdots,2n-2 }^k $ and similarly for $ D_y^l $ in the second variable $ y $. In fact, \eqref{pointwisegreen} is satisfied for operators $ (S_qK_q)^{IJ} $ and $ (S_qK_q^\prime)^{IJ} $.
\end{thm}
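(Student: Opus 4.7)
The plan is to implement the scaling/uniform-interior-regularity strategy from \cite{koenig02,christ88}, with every appeal to $K_{q\pm 1}$ (which is unavailable here) replaced by the closed-range-at-level-$q$ estimates from Corollary \ref{prop:2.6}.

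First I would prove smoothness of $K_q^{IJ}$ off the diagonal, mirroring the argument given for $S_q^{IJ}$ in the excerpt. Decompose $K_q = S_qK_q + S_q'K_q$; for a $(0,q)$-form $f$ with coefficients in $C^1_0(U_1)$ and $U_2\supset\ovl{U_1}$, set $v = K_q f$. Then $\dbarb v \perp \ker\dbarb^{0,q+1}$, so Corollary \ref{cor:2.3} produces a Sobolev gain on $\dbarb v$, and Corollary \ref{cor:2.2} supplies a matching gain on $v$ away from $\ovl{U_2}$. Iterating in $s$ and invoking the same $L^2$-duality plus Sobolev embedding argument used above for $(I-S_q)^{IJ}$ yields $(S_qK_q)^{IJ},(S_q'K_q)^{IJ}\in C^\infty(U\times U\setminus\Delta_U)$, and hence so does $K_q^{IJ}$.

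For the quantitative estimate \eqref{pointwisegreen}, fix $x_0\ne y_0$, let $d = \rho(x_0,y_0)$, and by $L^2$-duality reduce to bounding $\pare{K_q(\phi\bar\omega^J),\psi\bar\omega^I}$ for bump functions $\phi,\psi$ of parameter $d$ supported in $B(x_0,cd)$ and $B(y_0,cd)$, where $c$ is small enough that the two balls are disjoint. Rescale via $g_{x_0,R}$ with $R = c_1^{-1}d$ so that both balls lie inside $\tilde B(x_0,R)$, and put $u = K_q(\phi\bar\omega^J)$. Since $\Box_b u = \phi\bar\omega^J - H_q(\phi\bar\omega^J)$ and $H_q$ has smooth kernel, the harmonic piece is benign, and the pullback $\hat u$ satisfies $\hat\p_b\hat\dbar_b\hat u + \hat\dbar_b\hat\p_b\hat u = R^2\widehat{\phi\bar\omega^J}$ on $B_0$ modulo first-order perturbations coming from $\hat\p_b-(\dbarb^B)^*=P^0$. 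Now decompose $\hat u = \widehat{S_qu} + \widehat{S_q'u}$: on the first summand $\hat\p_b$ vanishes up to a zero-order scaling correction, so Corollary \ref{prop:2.6}(f) applies; on the second $\hat\dbar_b$ vanishes similarly, so Corollary \ref{prop:2.6}(e) applies. Iterating the resulting Sobolev bound yields $\norm{\hat u}_{C^N(V)}\le C_N$ uniformly in $x_0$ and $R$ on any $V\subset\subset B_0$ disjoint from $\supp\hat\phi$. Rescaling back via \eqref{eq:formscaling} and Theorem \ref{teo:2.4}(b), every derivative contributes a factor $R^{-1}\sim d^{-1}$, the $R^2$ multiplying the right-hand side of $\hat\Box_b\hat u$ produces the $d^2$ gain, and the volume factor $\abs{B(x_0,d)}^{-1}$ emerges when converting bump-function duality back to the pointwise kernel. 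A parallel bump-function duality in the $y$-variable (using that $K_q$ is self-adjoint in the sense that $K_q^{IJ}(x,y)=\ovl{K_q^{JI}(y,x)}$) handles the $D_y^l$ derivatives and completes \eqref{pointwisegreen}.

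The main obstacle, as the author flagged in the introduction, is that neither $K_{q-1}$ nor $K_{q+1}$ is known to exist under our hypotheses, so one cannot route the argument through auxiliary complex Green operators as in \cite{koenig02}. The workaround is to insist on the splitting $u = S_qu + S_q'u$ at every stage so that on each summand one of $\hat\dbar_b,\hat\p_b$ vanishes (modulo zero-order scaling errors absorbed by the standard small-constant/large-constant trick), and to invoke exclusively the level-$q$ estimates \eqref{eq:k2.12}--\eqref{eq:k2.13} of Corollary \ref{prop:2.6}. The delicate bookkeeping is verifying that these orthogonality relations persist under scaling uniformly in $x_0,R$, and that the bump-function cancellation \eqref{eq:ebfS5} transfers from $S_q,S_q'$ to $K_q$ with the correct $d^2$ factor; this is exactly what will be packaged as the (soon-to-be-stated) Proposition \ref{prop:ebfK}, which then feeds back into the scaling argument above.
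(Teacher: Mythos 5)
Your overall strategy is the paper's: split $K_q=S_qK_q+S_q'K_q$, rescale by the maps $g_{x,r}$, use only the level-$q$ scaled estimates of Corollary \ref{prop:2.6}, then recover the kernel bound by $L^2$-duality in each variable plus Fourier inversion/Sobolev embedding. The genuine gap is in how you produce the factor $\rho(x,y)^{2}$. The scaled estimates \eqref{eq:k2.12}--\eqref{eq:k2.13} carry \emph{unscaled} error terms $\norm{\hat{\p}_b v}_{L^2}$, $\norm{\hat{\dbar}_b v}_{L^2}$, $\norm{v}_{L^2}$ on their right-hand sides, and for $v=(S_qK_qf)\hat{\ }$ these are a priori only $O(\abs{B(x,r)}^{-1/2}\norm{f}_{L^2})$ by $L^2$-boundedness of $K_q$, with no power of $r$. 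So the $R^2$ appearing when you scale the equation does \emph{not} by itself give the two-derivative gain; one must also prove the local $L^2$ cancellation estimates $\norm{K_qf}_{L^2_q(B(x,r))}\lesssim r^2\norm{f}_{L^2_q}$, $\norm{\dbarbs K_qf}_{L^2_{q-1}}\lesssim r\norm{f}_{L^2_q}$, $\norm{\dbarb K_qf}_{L^2_{q+1}}\lesssim r\norm{f}_{L^2_q}$ (and their $(I-S_q)K_q$, $(I-S_q')K_q$ versions) for $f$ supported in $B(x,r)$. This is exactly the content of the paper's Propositions \ref{prop:5.3}--\ref{prop:5.5}, obtained from the comparable-condition estimate \eqref{ineq:compweakyq} combined with the scaled Poincar\'e-type inequality of Proposition \ref{prop:2.7}, the closed-range orthogonality arguments ($u\perp\ker\dbarbs$, writing $u=\dbarbs\phi$, etc.), and a small-constant/large-constant duality trick. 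Your proposal never identifies this mechanism: you defer the $d^2$ cancellation to a ``to-be-stated'' Proposition \ref{prop:ebfK} without indicating how it would be proved, and in the paper the logical order is the reverse of what you suggest --- both Proposition \ref{prop:ebfK} and the pointwise bound \eqref{pointwisegreen} are deduced from Propositions \ref{prop:5.3}--\ref{prop:5.5}, not the kernel bound from the bump-function estimates. Without that input your scaling argument yields at best $\abs{D_x^kD_y^lK_q^{IJ}(x,y)}\lesssim \rho(x,y)^{-k-l}\abs{B(x,\rho(x,y))}^{-1}$.

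Two lesser points. Your labels are swapped: $\dbarb(S_qK_qf)=0$ and $\dbarbs(S_q'K_qf)=0$, so \eqref{eq:k2.12} applies to the $S_qK_q$ piece and \eqref{eq:k2.13} to the $S_q'K_q$ piece. Also, the paper takes $r=c_0\rho(x,y)$ so that $\tilde{B}(x,r)$ and $\tilde{B}(y,r)$ are disjoint, and exploits that $(I-S_q')f$ is annihilated by both $\hat{\dbar}_b$ and $\hat{\p}_b$ near $x$, so Proposition \ref{prop:2.5} bounds all scaled Sobolev norms of the data by $\abs{B(x,r)}^{-1/2}\norm{f}_{L^2_q}$; in your single-ball setup you would still need the analogous control of $\norm{\zeta'((I-S_q')\phi)\hat{\ }}_{H^s_q(B_0)}$ on cutoffs supported away from $\supp\hat{\phi}$, together with a word on the uniformity of the $H_q(\phi\bar{\omega}^J)$ contribution under scaling, neither of which your sketch addresses.
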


We use a process similar to the previously employed for $ S^{IJ}_q $ to obtain estimates \eqref{pointwisegreen} and those of cancellation type. However, operator $ K_q^{IJ} $ requires an extra decay of order two, which we prove in Proposition \ref{prop:ebfK} using the results of Proposition \ref{prop:2.7} and Corollary \ref{prop:2.6}{.
The canonical solutions for the $ \dbarb $ and $ \dbarbs $-equations are 
$ G_q=\dbarbs K_q $ and $ G_q^\prime=\dbarb K_q $,
respectively.} 
The next two propositions will be used to show Proposition \ref{prop:ebfK}.

\begin{proposition}\label{prop:5.3}
	Fix $ V\subset\!\subset U $ and assume that $ B(x,r) \subset V$. 
	
	\begin{enumerate}[(a)]
		\item\label{prop5.3a} If $ \dbarb^{*} u=f$ in $ M $ with $ u\in C^\infty_{0,q}(M) $, $ f\in C^\infty_{0,q-1}(M) $ and $ u\perp \ker{ \dbarb^{*} } $, then
		\begin{equation}\label{ineq3.2}
		\norm{u}_{L_{q}^2(B(x,r))}\leq C r \norm{f}_{L^2_{q-1}},
		\end{equation}
		where $ C $ is a positive constant independent of $ x $ and $ r $ (but it can depend on $ V $ and $ U $).\\
		
		\item\label{prop5.3b} If $ \dbarb u=f$ in $ M $ with $ u\in C^\infty_{0,q}(M) $, $ f\in C^\infty_{0,q+1}(M) $ and $ u\perp \ker{ \dbarb } $, then
		\begin{equation}\label{ineq3.2p}
		\norm{u}_{L_{q}^2(B(x,r))}\leq C r \norm{f}_{L^2_{q+1}}
		\end{equation}
		where $ C $ is a positive constant independent of $ x $ and $ r $ (but it can depend on $ V $ and $ U $).
	\end{enumerate}
\end{proposition}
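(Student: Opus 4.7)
The plan is to combine Proposition~\ref{prop:2.7} (which trades the $L^{2}(B(x,r))$ norm for an $r$-factor at the cost of tangential derivatives and an $L^{2}$ term on a slightly larger set) with the local estimate \eqref{ineq:compweakyq} coming from the comparable weak $Y(q)$ condition, and with the closed range hypothesis. The crucial observation is that the orthogonality assumption on $u$, together with $\dbarb^{2}=0$ (hence $(\dbarbs)^{2}=0$), forces one of $\dbarb u$, $\dbarbs u$ to vanish, so the energy $Q_{b}(u,u)$ collapses to $\norm{f}^{2}$.

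For part (\ref{prop5.3a}), since $\dbarb^{0,q-1}$ has closed range, we have $(\Ker\dbarbs)^{\perp}=\Ran(\dbarb^{0,q-1})$ at level $q$; hence $u=\dbarb w$ for some $w\in L^{2}_{q-1}$, and consequently $\dbarb u=\dbarb^{2}w=0$. The same closed range property provides the a priori bound $\norm{u}_{L^{2}_{q}}\lesssim\norm{\dbarbs u}_{L^{2}_{q-1}}=\norm{f}_{L^{2}_{q-1}}$. Now pick $U_{0}$ with $V\subset\subset U_{0}\subset\subset U$ and a cutoff $\chi\in C^{\infty}_{0}(U)$ with $\chi\equiv 1$ on $U_{0}$. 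Since the commutators $\corc{\dbarb,\chi}$ and $\corc{\dbarbs,\chi}$ are zero-order (multiplication-type) operators, applying \eqref{ineq:compweakyq} to the compactly supported form $\chi u$ yields
\[
\sum_{K,j}\norm{X_{j}u_{K}}^{2}_{L^{2}(U_{0})}\lesssim \norm{\dbarb u}^{2}+\norm{\dbarbs u}^{2}+\norm{u}^{2}=\norm{f}^{2}+\norm{u}^{2}\lesssim\norm{f}^{2}.
\]
Proposition~\ref{prop:2.7} applied with the pair $V\subset\subset U_{0}$ then gives
\[
\norm{u}_{L^{2}_{q}(B(x,r))}\leq Cr\pare{\sum_{K,j}\norm{X_{j}u_{K}}_{L^{2}(U_{0})}+\norm{u}_{L^{2}_{q}(U_{0})}}\leq Cr\norm{f}_{L^{2}_{q-1}},
\]
which is \eqref{ineq3.2}; the constant depends on $V$ and $U$ through $\chi$ and the constants in Proposition~\ref{prop:2.7} and \eqref{ineq:compweakyq}, but not on $x$ or $r$.

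Part (\ref{prop5.3b}) is entirely symmetric. Closed range of $\dbarb^{0,q}$ gives $(\Ker\dbarb)^{\perp}=\Ran(\dbarbs)$ at level $q$, so $u=\dbarbs w'$ and therefore $\dbarbs u=(\dbarbs)^{2}w'=0$; the closed range inequality provides $\norm{u}_{L^{2}_{q}}\lesssim\norm{\dbarb u}_{L^{2}_{q+1}}=\norm{f}_{L^{2}_{q+1}}$, and the cutoff/Proposition~\ref{prop:2.7} argument proceeds verbatim with the roles of $\dbarb$ and $\dbarbs$ interchanged. I do not foresee a substantive obstacle; the only point requiring care is verifying that the commutator terms $\corc{\dbarb,\chi}u$ and $\corc{\dbarbs,\chi}u$ introduced by the cutoff are genuinely zero-order in $u$, so they are controlled by $\norm{u}$ and then absorbed into $\norm{f}$ via the closed range bound.
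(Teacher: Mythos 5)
Your proposal is correct and follows essentially the same route as the paper: note that the orthogonality hypothesis forces $\dbarb u=0$ (resp.\ $\dbarbs u=0$), use the closed range estimate to get $\norm{u}_{L^2_q}\lesssim\norm{f}$, apply \eqref{ineq:compweakyq} to a cutoff of $u$ (the commutator terms being zero order), and conclude with Proposition \ref{prop:2.7}. The only cosmetic difference is that you spell out why $\dbarb u=0$ via $u\in\Ran(\dbarb^{0,q-1})$, which the paper simply asserts.
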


\begin{proof}
	Note that $ \norm{\dbarb u}=
	0 $. Let $ V\subset\tilde V\subset U $, $ \zeta\in C^\infty_{0}(U) $ such that $ \zeta =1 $ on $ \tilde{V} $. 
	Applying \eqref{ineq:compweakyq} {to} $ \zeta u $, we obtain
	\begin{align}
	\sum_{J\in \I_q}\sum_{k=1}^{2n-2} \norm{X_k u_J}^2_{L^2(\tilde{V})}&\leq
	\sum_{J\in \I_q}\sum_{k=1}^{2n-2} \norm{X_k (\zeta u_J)}^2_{L^2(U)} 
	\leq 
	C_\zeta\pare{ \norm{\dbarbs u}^2_{L^2_{q-1}(U)}+\norm{u}^2_{L^2_q(U)} } \nn  \\
	&\lesssim {\norm{f}^2_{L^2_{q-1}(U)} + \norm{u}^2_{L^2_q(U)}}. \label{ec.1}
	\end{align}
	
	Since $ u\perp \ker(\Box_b) ${, closed range estimates imply that} $ \norm{u}^2_{L^2_q(U)}\lesssim  \norm{f}^2_{L^2_{q-1}} $. Thus, by \eqref{ec.1}, we obtain
	\[
	\sum_{J\in I_q}\sum_{k=1}^{2n-2} \norm{X_k u_J}^2_{L^2(\tilde V)}\lesssim \norm{f}^2_{L^2_{q-1}}.
	\]
	This last inequality and Proposition \ref{prop:2.7} give us
	\eqref{ineq3.2}. Similarly, we prove \eqref{ineq3.2p}.
\end{proof}

\begin{prop}\label{prop5.4}
	Fix $ U\subset\!\subset U^\prime $ and assume $ B(x,r)\subset U $.
	\begin{enumerate}[a)]
		\item\label{i:prop5.4a} If $ \dbarb u = (I-S_q^\prime)f $ in $ M $ with $ u\in C^\infty_{0,q-1}(M) $, $ f\in C^\infty_{0,q}(M) $, $ u\perp \ker \dbarb $ and $ \supp(f)\subset B(x,r) $, then
		\begin{equation}\label{prop5.4a}
		\norm{u}_{L^2_{q-1}}\leq Cr\norm{f}_{L^2_q(B(x,r))}.
		\end{equation}
		\item\label{i:prop5.4b} If $ \dbarbs u = (I-S_q)f $ in $ M $ with $ u\in C^\infty_{0,q+1}(M) $, $ f\in C^\infty_{0,q}(M) $, $ u\perp \ker \dbarbs $ and $ \supp(f)\subset B(x,r) $, then
		\[
		\norm{u}_{L^2_{q+1}}\leq Cr\norm{f}_{L^2_q(B(x,r))}.
		\] 
	\end{enumerate}
	Here $ C $ is a constant which does not depend on $ x $ and $ r $ (it can depend on $ U $ and $ U^\prime $).
\end{prop}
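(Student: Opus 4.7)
My plan for Proposition \ref{prop5.4} is to reduce part (a) to an estimate obtained by $L^2$-duality and then invoke Proposition \ref{prop:5.3}(a) to extract the factor $r$ from the localization of $f$; part (b) will follow by the symmetric argument with the roles of $\dbarb$ and $\dbarbs$ interchanged.

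First I would observe that $I-S_q' = \dbarb\dbarbs K_q$, so the canonical solution of $\dbarb u = (I-S_q')f$ with $u\perp \Ker\dbarb$ must be $u = \dbarbs K_q f$: indeed $\dbarbs K_q f \in \Ran\dbarbs \subset (\Ker\dbarb)^\perp$ and any two canonical solutions coincide. Using self-adjointness of $K_q$ on $L^2_q(M)$, for any test form $v\in C^\infty_{0,q-1}(M)$,
\[
(u,v) = (\dbarbs K_q f, v) = (K_q f, \dbarb v) = (f, K_q\dbarb v).
\]
Setting $w := K_q\dbarb v$ and using $\supp f\subset B(x,r)$, Cauchy--Schwarz reduces the proof to showing $\norm{w}_{L^2_q(B(x,r))}\leq Cr\,\norm{v}_{L^2_{q-1}}$.

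Next I would verify that $w$ is a legitimate input for Proposition \ref{prop:5.3}(a): (i) $\dbarb w = 0$ and (ii) $w\perp \Ker\dbarbs$, both by Hodge-type integrations by parts. For (i), note that $\dbarb v\perp\Ker\Box_b$, so $\Box_b w = \dbarb v$; then $\dbarb\dbarbs\dbarb w = \dbarb\Box_b w = 0$, pairing with $\dbarb w$ yields $\dbarbs\dbarb w = 0$, and pairing that with $w$ yields $\dbarb w = 0$. For (ii), the analogous chain of identities applied to $K_q h$ for $h\in \Ker\dbarbs^{0,q}$ shows that $K_q$ preserves $\Ker\dbarbs^{0,q}$, whence $(w,h) = (\dbarb v, K_q h) = (v,\dbarbs K_q h) = 0$. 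From $\dbarb w = 0$ and $\Box_b w = \dbarb v$ I then obtain $\dbarb\dbarbs w = \dbarb v$; since $\dbarbs w\in \Ran\dbarbs\perp \Ker\dbarb$, the form $\dbarbs w$ is the orthogonal projection of $v$ onto $(\Ker\dbarb^{0,q-1})^\perp$, so $\norm{\dbarbs w}_{L^2_{q-1}}\leq \norm{v}_{L^2_{q-1}}$.

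Applying Proposition \ref{prop:5.3}(a) to $w$ then yields $\norm{w}_{L^2_q(B(x,r))}\leq Cr\,\norm{\dbarbs w}_{L^2_{q-1}}\leq Cr\,\norm{v}_{L^2_{q-1}}$, which finishes part (a). For part (b) I would repeat the argument with $u = \dbarb K_q f$, $(u,v) = (f, K_q\dbarbs v)$, and $w = K_q\dbarbs v$, verifying by the symmetric Hodge manipulations that $\dbarbs w = 0$, $w\perp \Ker\dbarb$, and $\norm{\dbarb w}\leq \norm{v}$, then applying Proposition \ref{prop:5.3}(b). The main obstacle relative to \cite{koenig02} is the absence of $K_{q-1}$ and $K_{q+1}$, since closed range is assumed only at level $q$; the duality sketched above circumvents this, because every application of $K_q$ stays at its native level and only the $L^2$-orthogonal decompositions at neighboring levels enter, and these are supplied by closed range of $\dbarb^{0,q-1}$ and $\dbarb^{0,q}$ built into the standing hypotheses of the section.
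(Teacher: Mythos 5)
Your proposal is correct, and it rests on the same two pillars as the paper's own proof: the localized estimate of Proposition \ref{prop:5.3} and the observation that $\supp(f)\subset B(x,r)$ lets one pair $f$ against only the \emph{local} $L^2$ norm of a potential produced by $K_q$ at level $q$ (so that neither $K_{q-1}$ nor $K_{q+1}$ is ever needed). The difference is organizational: the paper does not dualize. For part a) it writes $u=\dbarbs\phi$ with $\phi=K_q\dbarb u$, $\phi\perp\ker\dbarbs$, computes $\norm{u}^2_{L^2_{q-1}}=(f,(I-S_q^\prime)\phi)_{B(x,r)}=(f,\phi)_{B(x,r)}$, applies part a) of Proposition \ref{prop:5.3} to $\phi$ (so the bound produced is $Cr\norm{u}_{L^2_{q-1}}$), and then removes $\norm{u}^2$ by a small-constant/large-constant absorption. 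Your version instead tests $u=\dbarbs K_qf$ against an arbitrary smooth $v$, applies Proposition \ref{prop:5.3} to $w=K_q\dbarb v$, and concludes by $L^2$-duality, which avoids the absorption step but requires the extra Hodge-theoretic bookkeeping ($\dbarb w=0$, $K_q$ preserves $\ker\dbarbs$, $\norm{\dbarbs w}\leq\norm{v}$). Note that the paper implicitly relies on the very same facts when it asserts that $\phi=K_q\dbarb u$ satisfies $\dbarbs\phi=u$ and $\phi\perp\ker\dbarbs$, so your verifications make explicit what the paper takes for granted; indeed, specializing your argument to $v=u$ essentially recovers the paper's proof. Both routes yield constants independent of $x$ and $r$, since that uniformity is carried entirely by Proposition \ref{prop:5.3}, and your symmetric treatment of part b) matches the paper's ``the proof of b) is similar.''
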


\begin{proof}
	We will prove $ a) $. The proof of $ b) $ is similar. Since $ u\perp \ker(\dbarb^{0,q-1}) $, a smooth $ (0,q) $-form $ \phi $ exists such that $ u=\dbarbs \phi $ and $ \phi\perp\ker(\dbarb^{*\ 0,q}) $ (for example, $ \phi=K_q\dbarb u $). By \eqref{prop5.3a} in Proposition \ref{prop:5.3}, we have
	$ \norm{\phi}_{L^2_q(B(x,r))}\leq Cr \norm{u}_{L^2_{q-1}}. $
	Let $ \ve>0 $, then
	\begin{align*}
		\norm{u}^2_{L^2_{q-1}}&
		= (f,(I-S_q^\prime)\phi)_{B(x,r)}=(f,\phi)_{B(x,r)} 
		\leq \norm{f}_{L^2_q}\norm{\phi}_{L^2_q(B(x,r))}\\
		&\leq \frac{r^2}{\ve}\norm{f}^2_{L^2_q}+\frac{\ve}{r^2}\norm{\phi}^2_{L^2_q(B(x,r))}
		\leq \frac{r^2}{\ve}\norm{f}^2_{L^2_q}+\ve C^2\norm{u}^2_{L^2_{q-1}}.
	\end{align*}
	We can absorb $ \norm{u}^2_{L^2_{q-1}} $ and obtain \eqref{prop5.4a} by choosing $ \ve $ small sufficient.
\end{proof}

	\begin{prop}\label{prop:5.5}
	Fix $ U \subset\!\subset U^\prime $ and let $ B(x,r)\subset U $. If $ f $ is a smooth $ (0,q) $-form with  $\supp(f)\subset B(x,r)$, then
	\begin{enumerate}[a)]
		\item\label{eq:f(2)}  $ \norm{G_qf}_{L^2_{q-1}}\leq Cr\norm{f}_{L^2_q(B(x,r))}$,
		\item\label{eq:f(4)}  $ \norm{G_q^\prime f}_{L^2_{q+1}}\leq Cr\norm{f}_{L^2_q(B(x,r))}$,
			\item\label{eq:c1} $\norm{(I-S_q^\prime)K_qf}_{L^2_q(B(x,r))}\leq Cr^2\norm{f}_{L^2_q(B(x,r))}$,
			\item\label{eq:c2} $\norm{(I-S_q)K_qf}_{L^2_q(B(x,r))}\leq Cr^2\norm{f}_{L^2_q(B(x,r))}$,
			\item\label{eq:c3} $\norm{K_qf}_{L^2_q(B(x,r))}\leq Cr^2\norm{f}_{L^2_q(B(x,r))}$.
	\end{enumerate}
\end{prop}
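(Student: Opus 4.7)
My plan is to deduce (a) and (b) directly from Proposition \ref{prop5.4}, then obtain (c) and (d) by combining Proposition \ref{prop:5.3} with (a) and (b), and finally get (e) from a clean algebraic decomposition of $K_q$ involving $S_q$ and $S_q'$.

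For (a) I set $u := G_q f = \dbarbs K_q f$, which lies in $\Ran(\dbarbs) \subseteq (\Ker \dbarb)^\perp$, hence $u \perp \Ker \dbarb$. From $S_q' = I - \dbarb \dbarbs K_q$ we get $\dbarb u = (I - S_q') f$, and $\supp f \subseteq B(x,r)$ is inherited, so Proposition \ref{prop5.4} part (a) yields (a). Part (b) follows by the same argument with $u := G_q' f$ and Proposition \ref{prop5.4} part (b).

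The key observation for (c) is $\dbarbs S_q' = 0$: indeed, $\dbarbs(I - \dbarb \dbarbs K_q) = \dbarbs - \dbarbs \Box_b K_q = \dbarbs - \dbarbs(I - H_q) = \dbarbs H_q = 0$, since $H_q$ projects onto $\Ker \Box_b \subseteq \Ker \dbarbs$. Setting $u := (I - S_q') K_q f = \dbarb \dbarbs K_q^2 f$, we have $u \in \Ran(\dbarb) \subseteq (\Ker \dbarbs)^\perp$, so $u \perp \Ker \dbarbs$, and $\dbarbs u = \dbarbs K_q f = G_q f$. Proposition \ref{prop:5.3} part (a) then gives $\|u\|_{L^2_q(B(x,r))} \leq Cr\, \|G_q f\|_{L^2_{q-1}}$, and combining with (a) produces the $r^2$ estimate in (c). The proof of (d) is mirror-symmetric: $u := (I - S_q) K_q f$ satisfies $u \perp \Ker \dbarb$ and $\dbarb u = G_q' f$ (via the symmetric identity $\dbarb S_q = 0$), so Proposition \ref{prop:5.3} part (b) combined with (b) delivers (d).

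Finally, using $H_q = S_q + S_q' - I$ (from the remark after Corollary \ref{cor:2.2}) and $H_q K_q = 0$, we obtain $K_q f = (S_q + S_q' - H_q) K_q f = (I - S_q') K_q f + (I - S_q) K_q f$, so (e) follows from (c) and (d) by the triangle inequality. There is no real analytic obstacle in this proposition: the whole argument reduces to the two elementary identities $\dbarbs S_q' = 0$ and $\dbarb S_q = 0$ (which collapse $\dbarbs u$ and $\dbarb u$ to the canonical solutions) together with the decomposition $K_q = (I - S_q') K_q + (I - S_q) K_q$; crucially, none of these manipulations invokes $K_{q-1}$ or $K_{q+1}$, which is essential in the present setting where only $K_q$ is known to exist.
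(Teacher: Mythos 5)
Your proposal is correct and follows essentially the same route as the paper: items a) and b) from Proposition \ref{prop5.4} via $\dbarb\dbarbs K_q=I-S_q^\prime$ and $\dbarbs\dbarb K_q=I-S_q$, items c) and d) from Proposition \ref{prop:5.3} combined with a) and b), and item e) from the decomposition $K_q=(I-S_q)K_q+(I-S_q^\prime)K_q$. Your explicit verifications of the auxiliary identities $\dbarbs S_q^\prime=0$, $\dbarb S_q=0$, and $H_qK_q=0$ are accurate but are simply spelled-out versions of facts the paper invokes directly.
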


	\begin{proof}
		Item \ref{eq:f(2)}) follows by \eqref{i:prop5.4a} in Proposition \ref{prop5.4} and from the fact that $ \dbarb\dbarbs K_q=I-S_q^\prime $ and $ \Ran(\dbarbs K_q) \perp \ker{\dbarb} $. Similarly, we use \eqref{i:prop5.4b} in Proposition \ref{prop5.4} to obtain item \ref{eq:f(4)}) since $ \dbarbs\dbarb K_q=I-S_q $ and $ \Ran(\dbarb K_q) \perp \ker{\dbarbs} $. For the item \ref{eq:c1}), observe that since $ \dbarbs(I-S_q^\prime)K_q = \dbarbs K_q$ and $ (I-S_q^\prime)K_qf\perp \ker \dbarbs $, \eqref{prop5.3a} in Proposition \ref{prop:5.3} gives us
		\[
		\norm{(I-S_q^\prime)K_qf}_{L^2_q(B(x,r))} \leq cr\norm{\dbarbs K_q f}_{L^2_{q-1}}.
		\]
		So item \ref{eq:c1}) follows from item \ref{eq:f(2)}). 
		Since $ \dbarb(I-S_q)K_q=\dbarb K_q $ and $ (I-S_q)K_qf\perp \ker \dbarb $, \eqref{prop5.3b} in Proposition \ref{prop:5.3} yields
		\[
		\norm{(I-S_q)K_qf}_{L^2_q(B(x,r))} \leq cr\norm{\dbarb K_q f}_{L^2_{q+1}}.
		\]
		So, item \ref{eq:c2}) follows from item \ref{eq:f(4)}). Item \ref{eq:c3}) follows from items \ref{eq:c1}) and \ref{eq:c2}) and the fact that $ K_q=(I-S_q)K_q+(I-S_q^\prime)K_q $.
	\end{proof}

\begin{prop}\label{prop:ebfK}
	Under hypotheses of Theorem \ref{teo:5.1}, if $ \psi $ is a smooth function supported in $ B(x,r) $ normalized so that $ \norm{D^l\psi}_{L^\infty}\leq r^{-l} $ for $ 0\leq l\leq N $, then for every integer $ k\geq 0 $
	\begin{equation}\label{eq:ebfK1}
	\abs{D^k_zK_q^{IJ}(\psi)(z)}\leq C_k r^{2-k}\ , \qquad \forall z\in B(x,r)
	\end{equation}
	where $ D^k_z $ is of the type $ X_{j_1}\cdots X_{j_k} $ and $ N=N(k) $ is sufficiently large, for every multi {indices} $ I,J\in \I_q $. In fact, these estimates are satisfied for operators $ (S_qK_q)^{IJ} $ and $ (S_q'K_q)^{IJ} $.
\end{prop}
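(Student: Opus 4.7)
Fix a bump function $\psi$ supported in $B(x,r)$ and $I,J\in\I_q$; set $\phi=\psi\bar{\omega}^J$ and $u=K_q\phi$. The plan mirrors the treatment of $S_q^{IJ}$ from the previous subsection, with the extra $r^2$-decay coming from Proposition \ref{prop:5.5}. Choose $R=c_1^{-1}r$ so that $B(x,r)\subset\tilde{B}(x,R)$, scale via $g_{x,R}$, and fix cutoffs $\zeta\prec\zeta^\prime\prec\zeta^{\prime\prime}\in C^\infty_0(B_0)$ with $\zeta\equiv 1$ on an open $V$ for which $g_{x,R}(V)\supset B(x,r)$.

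The base $L^2$ estimates on $B_0$ come directly from Proposition \ref{prop:5.5}: item (e) applied with radius comparable to $R$ (so that $\tilde{B}(x,R)\subset B(x,c_2R)$ and $\supp\phi\subset B(x,c_2R)$) together with the bump-function bound on $\|\phi\|_{L^2}$ gives $\|u\|_{L^2_q(\tilde{B}(x,R))}\lesssim r^2|B(x,r)|^{1/2}$, which scales via Remark \ref{rem:pullback1} to $\|\hat{u}\|_{L^2_q(B_0)}\lesssim r^2$. Items (a) and (b) of the same proposition furnish $\|\widehat{G_q\phi}\|_{L^2_{q-1}(B_0)},\;\|\widehat{G_q^\prime\phi}\|_{L^2_{q+1}(B_0)}\lesssim r$.

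The bootstrap to all Sobolev orders proceeds in two stages. First, upgrade the $L^2$ bounds on $\widehat{G_q^\prime\phi}$ and $\widehat{G_q\phi}$ to $\|\zeta^\prime\widehat{G_q^\prime\phi}\|_{H^s_{q+1}(B_0)},\;\|\zeta^\prime\widehat{G_q\phi}\|_{H^s_{q-1}(B_0)}\lesssim r$ for every $s\ge 0$, exploiting the structural identities $\dbarb G_q^\prime\phi=0$ and $\dbarbs G_q^\prime\phi=(I-S_q)\phi$ (and the symmetric ones for $G_q\phi$) together with the orthogonality $G_q^\prime\phi\perp\Ker\dbarbs^{0,q+1}$ and $G_q\phi\perp\Ker\dbarb^{0,q-1}$; this activates Corollary \ref{cor:2.3}, whose scaled analogue (derivable from Corollary \ref{prop:2.6}) combined with the already established control $\|\zeta^{\prime\prime}\widehat{(I-S_q)\phi}\|_{H^s_q(B_0)}\lesssim 1$ from the $S_q$-analysis propagates the $r$-decay through all $H^s$. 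Second, feed these into Proposition \ref{prop:2.5} applied to $\hat{u}$,
\[
\|\zeta\hat{u}\|_{H^{s+\ve}_q}^2\lesssim\|\zeta^\prime\hat{\dbar}_b\hat{u}\|_{H^s_{q+1}}^2+\|\zeta^\prime\hat{\p}_b\hat{u}\|_{H^s_{q-1}}^2+\|\hat{u}\|_{L^2_q}^2,
\]
noting $\hat{\dbar}_b\hat{u}=R\widehat{G_q^\prime\phi}$ and $\hat{\p}_b\hat{u}=R\widehat{G_q\phi}$, so each derivative term is $\lesssim(Rr)^2\approx r^4$ and the $L^2$ term is $\lesssim r^4$; iteration in $s$ with interpolation yields $\|\zeta\hat{u}\|_{H^s_q(B_0)}\lesssim r^2$ for every $s\ge 0$. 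Finally, Sobolev embedding with $s$ large forces $\|\hat{u}\|_{C^k(V)}\lesssim r^2$, and unscaling via $\hat{D}^k\hat{u}=R^k\widehat{D^ku}$ with $g_{x,R}(V)\supset B(x,r)$ produces $|D^ku(z)|\le R^{-k}\|\hat{u}\|_{C^k(V)}\lesssim r^{2-k}$ on $B(x,r)$, which is \eqref{eq:ebfK1}. The analogous bounds for $(S_qK_q)^{IJ}$ and $(S_q^\prime K_q)^{IJ}$ follow by the same argument, using $S_qK_q=K_qS_q$, $S_q^\prime K_q=K_qS_q^\prime$ and the bump-function bounds \eqref{eq:ebfS5} applied to $S_q\phi$ and $S_q^\prime\phi$.

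The main obstacle is propagating the sharp $r$-decay of $\widehat{G_q^\prime\phi}$ and $\widehat{G_q\phi}$ across all Sobolev orders. The scaled subelliptic machinery (Proposition \ref{prop:2.5} and Corollary \ref{prop:2.6}) is calibrated to form level $q$, while these canonical solutions live at levels $q\pm 1$ where no closed-range hypothesis is assumed. Bridging the gap requires simultaneous use of the two orthogonality conditions on $G_q^\prime\phi$ and $G_q\phi$, the Corollary \ref{cor:2.3}-type Sobolev gain made available by closed range at level $q$, and the independent $H^s$ control on $(I-S_q)\phi$ and $(I-S_q^\prime)\phi$ supplied by the $S_q$-analysis; a careful induction on $s$ then closes the argument.
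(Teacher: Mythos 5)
Your main estimate \eqref{eq:ebfK1} for $K_q^{IJ}$ follows essentially the paper's route (scale by $g_{x,R}$, use Proposition \ref{prop:5.5} for the $r^2$- and $r$-decays in $L^2$, then bootstrap with Proposition \ref{prop:2.5} and Corollary \ref{prop:2.6}, Sobolev embedding, and unscaling), but the bootstrap step is stated too loosely. Corollary \ref{cor:2.3} is an unscaled estimate whose constants depend on the neighborhood, so what you actually need are the uniform scaled versions, namely items (b) and (d) of Corollary \ref{prop:2.6}; these require a potential $v$ at level $q$ with $\hat{\p}_b v=0$ (resp.\ $\hat{\dbar}_b v=0$), and the natural choices are the scaled pullbacks of $S_q^\prime K_q\phi$ and $S_qK_q\phi$ (using $\dbarbs S_q^\prime K_q=0$, $\dbarb S_q^\prime K_q\phi=G_q^\prime\phi$, $\dbarbs\dbarb S_q^\prime K_q\phi=(I-S_q)\phi$, and symmetrically for $S_qK_q\phi$, together with Remark \ref{rem:ebfS6}). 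Once these potentials are introduced you are in effect estimating $S_qK_q\phi$ and $S_q^\prime K_q\phi$ separately, which is exactly how the paper proceeds: each piece needs only one of the canonical-solution bounds of Proposition \ref{prop:5.5}, and the bound for $K_q=S_qK_q+S_q^\prime K_q$ follows by summation. So this part is fixable, and when fixed it coincides with the paper's argument.

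The genuine gap is in your last paragraph, where you claim the ``in fact'' statement for $(S_qK_q)^{IJ}$ and $(S_q^\prime K_q)^{IJ}$ follows ``by the same argument, using $S_qK_q=K_qS_q$ \ldots and the bump-function bounds \eqref{eq:ebfS5} applied to $S_q\phi$ and $S_q^\prime\phi$.'' This does not work: $S_q\phi$ is not a bump function supported in $B(x,r)$. The Szeg\"o projection does not preserve supports, and \eqref{eq:ebfS5} only provides pointwise derivative bounds for $S_q^{IJ}\psi$ \emph{inside} $B(x,r)$, with no compact support or normalization outside; but every ingredient of your main argument (Proposition \ref{prop:5.5}, and the bounds on $\norm{\zeta^\prime(\dbarb\phi)\hat{}\ }_{H^s}$) requires $\supp\phi\subset B(x,r)$, so the argument cannot be rerun with $S_q\phi$ in place of $\phi$. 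The correct route is the one already implicit in your bootstrap (and the one the paper takes): apply Proposition \ref{prop:2.5} directly to the scaled pullback of $S_qK_q\phi$, using $\dbarb S_qK_q\phi=0$, $\dbarbs S_qK_q\phi=G_q\phi$, $\dbarb\dbarbs S_qK_q\phi=(I-S_q^\prime)\phi$, item \eqref{eq:k2.11} of Corollary \ref{prop:2.6}, and items \ref{eq:f(2)}), \ref{eq:c1}), \ref{eq:c3}) of Proposition \ref{prop:5.5}; and symmetrically for $S_q^\prime K_q\phi$.
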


\begin{proof}
	Since $ K_q=S_qK_q+S_q'K_q $, it will be sufficient to prove \eqref{eq:ebfK1} for $ S_qK_q $ and $ S_q'K_q $ instead of $ K_q $. We will do this for $ S_qK_q $; the proof of $ S_q'K_q $ is similar. 
	Below, we will assume the scaled pullbacks following $ g_{x,R} $ for some $ R>0 $ {are} such that $ B(x,r) \subset \tilde{B}(x,R) $. Let $ \phi=\psi \bar{\omega}^J $, $ V\subset \widetilde{V}\subset B_0 $ be open sets, $ \zeta,\zeta^\prime,\zeta^{\prime\prime}\in C^\infty_0(B_0) $ such that $ \zeta\prec\zeta^\prime \prec \zeta^{\prime\prime}$ and $ \zeta\equiv 1 $ {on} $ V $. By Proposition \ref{prop:2.5} and Remark \ref{rem:pullback1}, we have
	\begin{align}
	\norm{(S_qK_q \phi)\hat{}\ }_{H^{s+\ve}_{q}(V)} &
	\lesssim \norm{\zeta' \hat{\p}_b (S_qK_q\phi)\hat{\ } }_{H_{q-1}^s(B_0)} + \abs{B(x,R)}^{-\frac{1}{2}} \norm{S_qK_q\phi }_{L^2_q(\tilde{B}(x,R))}. \label{eq:ebfK2}
	\end{align}

	By $ (a) $ and $ (b) $ in Theorem \ref{teo:2.4}, \eqref{eq:c2} and \eqref{eq:c3} in Proposition  \ref{prop:5.5}, we obtain
	\begin{align}
	\norm{S_qK_q \phi}_{L^2_q(\tilde{B}(x,R))}&
	\lesssim c_2^2R^2\norm{\phi}_{L^2_q(B(x,c_2R))}= c_2^2R^2\norm{\phi}_{L^2_q(\tilde{B}(x,R))} 
	\lesssim R^2 \abs{ B(x,R) }^{\frac{1}{2}}. \label{eq:ebfK3}
	\end{align}
	
	On the other hand, from \eqref{eq:k2.11} in Corollary \ref{prop:2.6} and using Remark \ref{rem:pullback1} again, we have
	\begin{align}
	\norm{ \zeta' \hat{\p}_b (S_qK_q\phi)\hat{\ } }_{H^s_{q-1}(B_0) } &\lesssim 
	R^2\norm{ \zeta'' (\dbarb \dbarbs S_qK_q\phi)\hat{\ } }_{H^{s-\ve/2}_{q}(B_0)} + R\norm{ (\dbarbs S_qK_q\phi)\hat{\ } }_{L^2_{q-1}(B_0)} \nn\\
	&\quad + \abs{B(x,R)}^{-\frac{1}{2}} \norm{S_qK_q\phi }_{L^2_q(\tilde{B}(x,R))}.
	\label{eq:ebfK4}
	\end{align}
	For $ N $ large sufficient, since $ \dbarb\dbarbs S_qK_q\phi=\dbarb\dbarbs K_q\phi=(I-S_q')\phi $, by Remark \ref{rem:ebfS6}, we obtain
	\begin{equation}
	\norm{\zeta''(\dbarb\dbarbs S_qK_q\phi)\hat{\ }}_{H^{s-\ve/2}_{q}(B_0)} = \norm{ \zeta''((I-S'_q)\phi)\hat{\ } }_{H^{s-\ve/2}_{q}(B_0)}\leq C_s . \label{eq:ebfK5}
	\end{equation}
	Also, since $ \dbarbs S_qK_q = \dbarbs K_q $, by Theorem \ref{teo:2.4} and \eqref{eq:f(2)} in Proposition \ref{prop:5.5}, we have
	\begin{align}
	\norm{ (\dbarbs S_qK_q\phi)\hat{\ }}_{L^2_{q-1}(B_0)} &
	\lesssim \abs{B(x,R) }^{-\frac{1}{2}}\norm{ \dbarbs K_q\phi }_{L^2_{q-1}(\tilde{B}(x,R))}  \nn\\
	&\leq c_2 R \abs{B(x,R) }^{-\frac{1}{2}}\norm{ \phi }_{L^2_q( B(x,c_2R))}  
	\lesssim R. \label{eq:ebfK6}
	\end{align}
	
	Taking \eqref{eq:ebfK3} -- \eqref{eq:ebfK6} into \eqref{eq:ebfK2}, we have
	\begin{equation*}
	\norm{ (S_qK_q\phi)\hat{\ } }_{H^{s+\ve}_{q}(V)} \leq C_s R^2.
	\end{equation*}
	Using {the Sobolev embedding theorem}, we obtain
	\begin{align*}
	\norm{\hat{D}^k_z (S_qK_q\phi)\hat{\ } }_{L^\infty_{q}({V})}\leq C_k R^2.
	\end{align*}
	Then
	\begin{align}
		\abs{D^k_z(S_qK_q)^{IJ}\psi(z)}&\leq \norm{D^k_zS_qK_q\phi}_{L^\infty_{q}(g_{x,R}(V))}
		= R^{-k}\norm{\hat{D}^k_z(S_qK_q\phi)\hat{\ }}_{L^\infty_{q}(V)}
		\leq C_kR^{2-k}, \label{eq:bfK10}
	\end{align}
	for all $ z
	$ such that $ z\in g_{x,R}(V) $. 
	Choosing $ R=r c_1^{-1} $ and $ V\subset B_0 $ such that $ B(x,r)\subset g_{x,R}(V) $, we have by $ a) $ in Theorem \ref{teo:2.4} that $ B(x,r)\subset \tilde{B}(x,R) $. Thus by \eqref{eq:bfK10}, it holds that
	\begin{align*}
	\abs{D^k_z(S_qK_q)^{IJ}\psi(z)}&\leq C_kr^{2-k}
	\end{align*}
	for all $ z\in B(x,r) $ and $ N $ sufficiently large.
\end{proof}

\begin{proof}[Proof of Theorem \ref{teo:5.1}]
	Since $ K_q=S_qK_q+S^\prime_qK_q $, it is sufficient to show the {result for the} operators $ (S_qK_q)^{IJ} $ and $ (S_q^{\prime}K_q)^{IJ} $. We will prove {the result for $ (S_qK_q)^{IJ} $ as t}he proof for $ (S_q^{\prime}K_q)^{IJ} $ is similar. 
	Let {$ c_0 $ be a positive constant such that $ r=c_0\rho(x,y) $ and $ \tilde{B}(x,r)\cap\tilde{B}(y,r) = \emptyset$, uniformly chosen for $ x,y $ and $r $}
	(we can choose $ c_0=(2c_2)^{-1} $ by Theorem \ref{teo:2.4}). Let $ f=\phi\bar{\omega}^J $, for $ J\in \I_q $, and $ \phi\in C^\infty_0(\tilde{B}(y,r)) $. For $ D^k $ as in the statement of the theorem, we affirm that the following holds
	\begin{align}\label{eq:203}
		\abs{ D^k((S_qK_q)^{IJ}\phi )(z)}&\leq C_k r^{2-k} \abs{ B(x,r) }^{-\frac{1}{2}} \norm{\phi}_{L^2}, \qquad \forall z\in \tilde{B}(x,r).
	\end{align}
	We prove \eqref{eq:203} as follows. Note that, because $ S_qK_q = (I-S_q^\prime)K_q$, 
	we have $ \dbarb\dbarbs S_qK_qf=(I-S_q^\prime)f $. Also $ \dbarb S_qK_qf=0  $. Doing a scaled pullback through $ g_{x,r} $, we obtain	
	\begin{align}\label{ec:200}
		\hat{\dbar}_b\hat{\p}_b (S_qK_qf)\hat{}=r^2((I-S_q^\prime)f)\hat{}\ , \qquad  \qquad \hat{\dbar}_b (S_qK_qf)\hat{}=0.
	\end{align}
	
	On the other hand, let $ C_1'>0 $ independent of $ x,y $ and $ r $ such that $ \tilde{B}(y,r)\subset B(x,C_1'r) $ and $ \tilde{B}(x,r)\subset B(x,C_1'r) $ (if $ c_0=(2c_2)^{-1} $, we can choose $ C_1'=4c_2 $). 	
	Then $ f $ will be supported on $ B(x, C_1'r) $. By Remark \ref{rem:pullback1} and \eqref{eq:c1} in Proposition \ref{prop:5.5}, we obtain
	\begin{align}
		\norm{(r^{-2}S_qK_qf)\hat{}\ }_{L^2_q(B_0)} &\lesssim \abs{ B(x,r) }^{-\frac{1}{2}}\norm{r^{-2}S_qK_qf}_{L^2_q(\tilde{B}(x,r))} 
		\leq \abs{ B(x,r) }^{-\frac{1}{2}}\norm{r^{-2}S_qK_q f}_{L^2_q(B(x,C_1'r))}  \nn\\
		&\leq  C\abs{ B(x,r) }^{-\frac{1}{2}}\norm{f}_{L^2_q} \label{ec:101} 
	\end{align}
	for some positive constant $ C $ independent of $ x,y $ and $ r $.  Also, from \eqref{eq:f(2)} in Proposition \ref{prop:5.5},
	and since 
	$ \dbarbs S_qK_q=G_q $, we obtain
	\begin{align}
		\norm{\hat{\p}_b (r^{-2}S_qK_qf)\hat{}\ }_{L^2_{q-1}(B_0)}&\leq r^{-1}\abs{B(x,r)}^{-\frac{1}{2}}\norm{\dbarbs S_qK_qf}_{L^2_{q-1}(\tilde{B}(x,r))} 
		\leq C\abs{B(x,r)}^{-\frac{1}{2}} \norm{f}_{L^2_q(B(x,C_1'r))} \nn \\		
		&= C\abs{B(x,r)}^{-\frac{1}{2}}\norm{f}_{L^2_q}. \label{ec:102}
	\end{align}

	Furthermore, since $ \dbarb (I-S^\prime_q)f=0 $, and $ \dbarbs(I-S_q')f=0 $ on $ \tilde{B}(x,r) $ because $ \supp(f)\subset \tilde{B}(y,r) $, using a pullback given by $ g_{x,r} $, we obtain
	\[
	\hat{\dbar}_b((I-S_q')f)\hat{} =0, \quad \text{ and } \quad \hat{\p}_b((I-S_q')f)\hat{} = 0 \quad \text{ on } B_0.
	\]
	Therefore, from Proposition \ref{prop:2.5}, we obtain for any $ \zeta\in C^\infty_0(B_0) $ that
	\begin{align}
		\norm{\zeta((I-S_q')f)\hat{}\ }_{H^s_{q}(B_0)}&\leq C_s \norm{((I-S_q')f)\hat{}\ }_{L^2_q(B_0)}
		\leq C_s \abs{B(x,r)}^{-\frac{1}{2}} \norm{(I-S_q')f}_{L^2_q(\tilde{B}(x,r))}\nn\\
		&\leq C_s\abs{ B(x,r) }^{-\frac{1}{2}}\norm{ f }_{L^2_q}\leq C_s\abs{ B(x,r) }^{-\frac{1}{2}}\norm{ f }_{L^2_q}. \label{ec:103}
	\end{align}
	The second inequality follows from Remark \ref{rem:pullback1} and Theorem \ref{teo:2.4}, and the third follows from the $ L^2 $ continuity of $ I-S_q' $.
	Using \eqref{eq:k2.12} in Corollary \ref{prop:2.6} and \eqref{ec:200}--\eqref{ec:103}, we obtain	
	\begin{align}\label{eq:201}
		\norm{\zeta'r^{-2}(S_qK_qf)\hat{}\ }_{H^s(B_0)}\leq  C_s \abs{B(x,r)}^{-\frac{1}{2}}\norm{f}_{L^2_q} .
	\end{align}
	Let $ V=g^{-1}_{x,r}(g_{x,r/2}(B_0)) $.
	Note that $ V\subset\!\subset B_0 $ is uniformly in $ x $ and $ r $.
	Considering $ \zeta\equiv 1 $ in $ V $ and $ s $ sufficiently large in \eqref{eq:201}, {the Sobolev embedding} theorem yields
	\begin{align}\label{eq:202}
	\norm{\hat{D}^k (S_qK_q f)\hat{}\ }_{L^\infty_{q}(V)}\leq C_k r^2\abs{B(x,r)}^{-\frac{1}{2}}\norm{f}_{L^2_q}
	\end{align}
	where $ C_k $ is independent of $ x $ and $ r $. Coming back to $ \tilde{B}(x,r) $ {via} $ g_{x,r} $, we obtain
	\begin{align}
		\abs{D^k(S_qK_q)^{IJ}\phi(z)}&\leq \norm{ D^k(S_qK_qf) }_{L^\infty_{q}(g_{x,r}(V))}=r^{-k}\norm{ \hat{D}^k(S_qK_q f)\hat{}\ } _{L^\infty_{q}(V)} \nn \\
		& \leq C_kr^{2-k}\abs{B(x,r)}^{-\frac{1}{2}}\norm{\phi}_{L^2}, 
		 \label{eq:204}
	\end{align}
	for all $ z\in g_{x,r}(V)=\tilde{B}(x,r/2) $, where $ D^k $ is of the {form} $ X_{j_1}X_{j_2}\cdots X_{j_k} $. As we did before, we can use Theorem \ref{teo:2.4} to obtain \eqref{eq:204} for $ z\in \tilde{B}(x,r) $. This {establishes} \eqref{eq:203}.

	Now we show the pointwise estimates for $ (S_qK_q)^{IJ} $ as a consequence of \eqref{eq:203}. Since \eqref{eq:203} is true for every $ \phi\in C^\infty_0(\tilde{B}(y,r)) $, $ L^2 $-duality will give us
	\begin{equation}\label{eq:4.3}
		\norm{D^k_z (S_qK_q)^{IJ}(z,\cdot)}_{L^2(\widetilde{B}(y,r)}\leq C_kr^{2-k}\abs{ B(x,r) }^{-\frac{1}{2}}
	\end{equation}
	for all $ z\in \widetilde{B}(x,r) $. Notice that the process we use to show \eqref{eq:203} can be reproduced if $ \phi\in C^\infty_0(\tilde{B}(x,r)) $ and we utilize $ g_{y,r} $ to scale the estimates. This, together with the fact that $ (S_qK_q)^{IJ}(z,w)=\ovl{ (S_qK_q)^{JI}(w,z) }$ because $S_qK_q $ is self-adjoint, allows us to conclude that
	\begin{equation}\label{eq:4.4}
		\qquad \norm{ D^l_w (S_qK_q)^{IJ} (\cdot, w)}_{L^2(\widetilde{B}(x,r))}\leq C_lr^{2-l} \abs{B(y,r)}^{-\frac{1}{2}},\qquad \forall w\in \widetilde{B}(y,r).
	\end{equation}
	Let $ \widehat{S_qK_q}{}^{IJ}(u,v):=(S_qK_q)^{IJ}(g_{x,r}(u),g_{y,r}(v)) $. By \eqref{eq:4.3}, \eqref{eq:4.4} and 	
	Theorem \ref{teo:2.4},  we have: 
	\begin{align}\label{eq:ee1}
		\norm{\hat{D}^k_u\widehat{S_qK_q}{}^{IJ}(u,\cdot)}_{L^2(B_0)} &\leq C^\prime_k r^2 \abs{B(x,r)}^{-\frac{1}{2}}\abs{B(y,r)}^{-\frac{1}{2}} \ ,\ \forall u\in B_0, \quad \text{ and }\\
		\norm{\hat{D}^l_v\widehat{S_qK_q}{}^{IJ}(\cdot,v)}_{L^2(B_0)} &\leq C^\prime_l r^2 \abs{B(x,r)}^{-\frac{1}{2}}\abs{B(y,r)}^{-\frac{1}{2}} \ ,\ \forall v\in B_0, \label{eq:ee2}
	\end{align}
	where $ \hat{D}^k_u $ is scaled through $ g_{x,r} $ and $ \hat{D}^l_v $ by $ g_{y,r} $.
	Since $ \abs{B(x,r)}\approx \abs{B(y,r)} $ uniformly
	{in} $ x,y$ and $ r $ because $ \rho(x,y)=cr $ and the doubling property; we have from \eqref{eq:ee1} and \eqref{eq:ee2}
	\begin{align*}
		\norm{\hat{D}^k_u \pare{ r^{-2}\abs{B(x,r)} \widehat{S_qK_q}{}^{IJ}(u,\cdot)}}_{L^2(B_0)} \leq C^\prime_k  \ ,\quad
		\norm{\hat{D}^l_v\pare{ r^{-2}\abs{B(x,r)} \widehat{S_qK_q}{}^{IJ}(\cdot,v)} }_{L^2(B_0)} \leq C^\prime_l , 
	\end{align*}
	for all $ (u,v)\in B_0\times B_0 $. Due to $ (e) $ and $ (f) $ in Theorem \ref{teo:2.4}, we can change $ \hat{D}^k_u $ and $ \hat{D}^l_v $ in these two last inequalities by $ \frac{\p^\alpha}{\p u^\alpha} $ and $ \frac{\p^\beta}{\p u^\beta} $ for multi indices $\alpha,\beta\in \N^{2n-1} $, respectively. Thus, a Fourier inversion argument gives us
	\[
	\nnorm{ \frac{\p^\alpha}{\p u^\alpha} \frac{\p^\beta}{\p v^\beta}\pare{ r^{-2}\abs{B(x,r)} \widehat{S_qK_q}{}^{IJ}(u,v)}}_{L^2(B_0\times B_0)} \leq C_{\alpha\beta}
	\]	
	uniformly in $ x $, $ y $ and $ r $. This, in particular, means that $ r^{-2}\abs{B(x,r)} \widehat{S_qK_q}{}^{IJ} \in C^\infty(B_0\times B_0)$, and with {the Sobolev} norms being uniformly bounded in $ x,y $ and $ r $.
	By $ (e) $ in Theorem \ref{teo:2.4} and {the Sobolev embedding theorem}, there will exist a positive constant $ C_{kl} $ such that
	\[
	\abs{ \hat{D}^k_u \hat{D}^l_v \pare{ r^{-2}\abs{B(x,r)} \widehat{S_qK_q}{}^{IJ}(u,v)} } \leq C_{kl}
	\]
	uniformly in $ x $, $ y $ and $ r $. Scaling back to $ \tilde{B}(x,r)\times \tilde{B}(y,r) $, we have
	\begin{equation*}
		\abs{ {D}^k_z {D}^l_w(S_qK_q)^{IJ}(z,w) } \leq C_{kl}^\prime r^{2-k-l}\abs{B(x,r)}^{-1} \ ,  \qquad \forall (z,w)\in \widetilde{B}(x,r) \times \widetilde{B}(y,r).
	\end{equation*}
	This proves the pointwise estimates \eqref{pointwisegreen} for $ (S_qK_q)^{IJ} $. The proof is complete.
\end{proof}

Below, we estate the result about {the} optimal gain of regularity for {the} Complex Green operator. By Theorem \ref{teo:5.1}, Proposition \ref{prop:ebfK} and since $ K_q: C_{0,q}^{\infty}(M)\to C_{0,q}^\infty(M) $, the proof follows exactly the same lines as \cite[Corollary 5.6]{koenig02}.

\begin{proposition}\label{prop:maximalestimates}
	Let $ M $ be a smooth, compact, orientable CR manifold of hypersurface type with closed range for $ \dbarb^{0,q-1} $ and $ \dbarb^{0,q} $. 
	Assume {the} comparable  weak $ Y(q) $ condition and finite type condition are satisfied in $ M $, 
	and let $ p\in\, ]1,\infty[ $. Then
	for each point in $ M $, there exists a neighborhood $ U $ such that
	\begin{equation}\label{ineq:lpmax}
		\sum_{i,j=1}^{2n-2}\norm{ X_iX_j u }_{L^p}\lesssim \norm{\Box_b u}_{L^p}+\norm{u}_{L^p}
	\end{equation}
	for every smooth $ (0,q) $-form $ u $ supported in $ U $.  (Here $ \lla{X_j}_{1\leq j\leq 2n-2} $ are real vector fields on $ U $ such that $ \lla{L_j=X_j+iX_{j+n-1}}_{1\leq j\leq n-1} $ span the CR structure on $ M $, and if $ u=\sum_{I\in\I_q}u_I\bar{\omega}^I $ then $ X_iX_ju=\sum_{I\in\I_q}(X_iX_ju_I)\bar{\omega}^I $).
\end{proposition}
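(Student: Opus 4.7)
\medskip

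\noindent\textbf{Proof proposal.} The plan is to reduce the maximal $L^p$ estimate to $L^p$ boundedness of the composite operators $X_i X_j K_q$, and then invoke Calder\'on--Zygmund theory on the space of homogeneous type $(M,\rho,dV)$ built in Section \ref{sectionhomogeneoustype}. First, using the closed range hypothesis at level $q$, one has the decomposition $u=K_q\Box_b u + H_q u$ for every smooth $(0,q)$-form $u$. Since $H_q$ is the orthogonal projection onto $\ker\Box_b$ and its distribution-kernel was shown (in the discussion after Proposition \ref{prop:5.5}) to be smooth on $U\times U$, the contribution $X_iX_j H_q u$ is controlled by $\|u\|_{L^p}$ via Schur / Young on a bounded kernel on a compact region. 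Thus it suffices to prove, for each $I,J\in\I_q$ and $1\le i,j\le 2n-2$, that the operator $g\mapsto X_i X_j (K_q^{IJ} g)$ extends boundedly to $L^p(U)$ for all $p\in(1,\infty)$.

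Next, to obtain $L^p$-boundedness of $X_iX_j K_q^{IJ}$, one shows that its distribution-kernel
\[
\mathcal K_{ij}^{IJ}(x,y):=X_i^{(x)}X_j^{(x)} K_q^{IJ}(x,y)
\]
is a Calder\'on--Zygmund kernel on $(M,\rho,dV)$, in the sense of Nagel--Stein--Wainger / Koenig and Christ. The pointwise size and regularity bounds come directly from Theorem \ref{teo:5.1}: setting $k=2$, $l=0$ gives the size estimate
\[
\bigl|\mathcal K_{ij}^{IJ}(x,y)\bigr|\lesssim |B(x,\rho(x,y))|^{-1},
\]
and taking instead $k=2$, $l=1$ together with the symmetric estimates for derivatives in the second variable yields the H\"ormander-type smoothness conditions
\[
\bigl|D^1_y\mathcal K_{ij}^{IJ}(x,y)\bigr|+\bigl|D^1_x\mathcal K_{ij}^{IJ}(x,y)\bigr|\lesssim \rho(x,y)^{-1}|B(x,\rho(x,y))|^{-1},
\]
which by the doubling property of $\rho$ imply the standard integrated H\"ormander conditions. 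Finally, the $L^2$-boundedness is obtained from the cancellation property of Proposition \ref{prop:ebfK}: applied with $k=2$, it gives $|X_iX_j K_q^{IJ}\psi(z)|\le C$ on $B(x,r)$ for every normalized bump $\psi$ supported in $B(x,r)$. This uniform bound is precisely the $T1$-type cancellation hypothesis needed to invoke the David--Journ\'e / NSW theorem on spaces of homogeneous type, yielding $L^2$-continuity of $X_iX_j K_q^{IJ}$; $L^p$-continuity for $1<p<\infty$ then follows from the standard Calder\'on--Zygmund extrapolation machinery (weak-type $(1,1)$ and interpolation with duality).

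Combining these ingredients, for any smooth $u$ supported in $U$,
\[
\sum_{i,j=1}^{2n-2}\norm{X_iX_j u}_{L^p}\lesssim \sum_{I,J}\sum_{i,j}\bigl(\norm{X_iX_j K_q^{IJ}\Box_b u}_{L^p}+\norm{X_iX_j H_q^{IJ}u}_{L^p}\bigr)\lesssim \norm{\Box_b u}_{L^p}+\norm{u}_{L^p},
\]
giving the claimed inequality \eqref{ineq:lpmax}. The global statement on $M$ then follows from a finite covering by neighborhoods $U$ on which the hypotheses hold, together with a partition of unity and commutator estimates between the $X_j$ and the cutoffs (which are controlled by lower order $L^p$ terms).

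The main technical obstacle, and the reason we cannot simply cite \cite{koenig02} verbatim, is that in our setting the closed range hypothesis is imposed only at the single form level $q$, so neither $K_{q-1}$ nor $K_{q+1}$ need exist. This forced us (throughout Section 4) to redo the derivation of the pointwise kernel and cancellation estimates for $K_q$ using only $\dbarb^{0,q-1}$ and $\dbarb^{0,q}$; once Theorem \ref{teo:5.1} and Proposition \ref{prop:ebfK} are in hand, however, the Calder\'on--Zygmund step is parallel to \cite[Corollary 5.6]{koenig02}, so the present argument reduces to assembling these pieces and verifying that each required condition on $\mathcal K_{ij}^{IJ}$ is exactly one of the estimates already established.
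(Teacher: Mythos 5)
Your proposal is correct and follows essentially the same route as the paper, which proves the proposition by combining the pointwise kernel estimates of Theorem \ref{teo:5.1}, the bump-function cancellation estimates of Proposition \ref{prop:ebfK}, and the fact that $K_q$ preserves $C^\infty_{0,q}(M)$, and then running the Calder\'on--Zygmund argument of \cite[Corollary 5.6]{koenig02} on the space of homogeneous type $(M,\rho,dV)$, exactly as you do via the decomposition $u=K_q\Box_b u+H_q u$ and $L^p$ boundedness of $X_iX_jK_q^{IJ}$. The only loose point is that the $T1$/weak-boundedness step also needs the symmetric bump and kernel estimates for the adjoint, but these follow from the self-adjointness of $K_q$ and the two-variable estimates already established, so this is a matter of detail rather than a gap.
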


\section*{Acknowledgement}
The author would like to express deep gratitude to A. Raich and T. Picon for generously sharing their expertise and insights. Their suggestions were invaluable in the creation of this article.

\bibliographystyle{abbrv}
\bibliography{bib}
\end{document}